\title{\textsc{On finite generation and infinite convergence\\ of generalized closures from the theory\\ of cutting planes}}
\author{Gennadiy Averkov\footnote{University of Magdeburg, Universit\"atsplatz 2,  39106 Magdeburg, Germany; email: averkov@math.uni-magdeburg.de}}
\newcommand{\rmcmd}[1]{\mathop{\mathrm{#1}}\nolimits}
\newcommand{\conv}{\rmcmd{conv}}
\newcommand{\cone}{\rmcmd{cone}}
\newcommand{\aff}{\rmcmd{aff}}
\newcommand{\lin}{\rmcmd{lin}}
\newcommand{\relintr}{\rmcmd{relint}}
\newcommand{\bd}{\rmcmd{bd}}
\newcommand{\real}{\mathbb{R}}
\newcommand{\natur}{\mathbb{N}}
\newcommand{\integer}{\mathbb{Z}}
\newcommand{\rational}{\mathbb{Q}}
\newcommand{\Aff}{\rmcmd{Aff}}
\newcommand{\term}[1]{\emph{#1}}
\newcommand{\cP}{\mathcal{P}}
\newcommand{\setcond}[2]{\left\{#1 \, : \, #2 \right\}}
\newcommand{\sprod}[2]{\left< #1 \, , \, #2 \right>}
\newcommand{\dotvar}{\, \cdot \,}
\newcommand{\intr}{\rmcmd{int}}
\newcommand{\bM}{\mathbb{M}}
\newcommand{\vol}{\rmcmd{vol}}
\newcommand{\header}[1]{\textup{(#1)}}
\newcommand{\maxfw}{\rmcmd{maxfw}}
\newcommand{\cl}{\rmcmd{cl}}
\newcommand{\cX}{\mathcal{X}}
\newcommand{\ext}{\rmcmd{ext}}
\newcommand{\rec}{\rmcmd{rec}}
\newcommand{\extr}{\rmcmd{extr}}
\newcommand{\red}{R}
\newcommand{\lineal}{\rmcmd{lineal}}
\newcommand{\Rbar}{\overline{R}}
\newcommand{\relint}{\relintr}
\newcommand{\floor}[1]{\left\lfloor#1\right\rfloor}
\newcommand{\eop}{\hspace*{\fill}~$\square$}
\newcommand{\Ch}{\rmcmd{Ch}}
\newcommand{\Sp}{\rmcmd{Sp}}
\newcommand{\Rem}{\rmcmd{Rem}}
\newenvironment{figtabular}[1]{
	\begin{figure}[htb]
		\begin{center}
			\begin{tabular}{#1}
}{
			\end{tabular}

		\end{center}

	\end{figure}
}
\newtheorem{nn}{}[section]
\newtheorem{theorem}[nn]{Theorem}
\newtheorem{proposition}[nn]{Proposition}
\newtheorem{lemma}[nn]{Lemma}
\newtheorem{corollary}[nn]{Corollary}
\theoremstyle{definition}
\newtheorem{remark}[nn]{Remark}
\newtheorem*{acknowledgements*}{Acknowledgements}
\numberwithin{equation}{section}
\begin{document}

\maketitle

\newcommand{\cL}{\mathcal{L}}

\begin{abstract}
	For convex sets $K$ and $L$ in $\real^d$ we define $R_L(K)$ to be the convex hull of all points belonging to $K$ but not to the interior of $L$. Cutting-plane  methods from integer and mixed-integer optimization can be expressed in geometric terms using functionals $R_L$ with appropriately chosen sets $L$. We describe the geometric properties of $R_L(K)$ and characterize those $L$ for which $R_L$ maps polyhedra to polyhedra. For certain natural classes $\cL$ of convex sets in $\real^d$ we consider the functional $R_\cL$ given by $R_\cL(K):= \bigcap_{L \in \cL}R_L(K)$. The functional $R_\cL$ can be used to define various types of closure operations considered in the theory of cutting planes (such as the Chv\'atal closure, the split closure as well as  generalized split closures recently introduced by Andersen, Louveaux and Weismantel). We study conditions on $\cL$ under which $R_\cL$ maps rational polyhedra to rational polyhedra. We also describe the limit of the sequence of sets obtained by iterative application of $R_\cL$ to $K$. A part of the presented material gives generalized formulations and unified proofs of several recent results obtained by various authors.
\end{abstract}

\newtheoremstyle{itsemicolon}{}{}{\mdseries\rmfamily}{}{\itshape}{:}{ }{}
\newtheoremstyle{itdot}{}{}{\mdseries\rmfamily}{}{\itshape}{:}{ }{}
\theoremstyle{itdot}
\newtheorem*{msc*}{2010 Mathematics Subject Classification} 

\begin{msc*}
	Primary 90C11;  Secondary 52A20, 52B20, 90C10.
\end{msc*}


\newtheorem*{keywords*}{Keywords}

\begin{keywords*}
	Chv\'atal closure; cutting plane; disjunctive programming; Gomory cut; max-facet-width; mixed-integer optimization; recession cone; split closure; width.
\end{keywords*}

\section{Introduction}

Let $d \in \natur$. By $\le$ we denote the standard partial order on $\real^d$, that is, for $x,y \in \real^d$ one has $x \le y$ if and only if for each $i \in \{1,\ldots,d\}$ the $i$-th component of $x$ is not larger than the $i$-th component of $y$. The notations $\conv$ and $\intr$ stand for the convex hull and interior, respectively. In what follows $L$ stands for an arbitrary $d$-dimensional closed convex set in $\real^d$, $\cL$ for a nonempty class of $d$-dimensional closed convex sets in $\real^d$ and $K$ for an arbitrary closed convex set in $\real^d$ (not necessarily $d$-dimensional). In this manuscript we study the functionals $R_L$, $R_\cL$ and $R_\cL^i$ (with $i = 0,1,2,\ldots$) defined by
\begin{align*}
	R_L(K) & := \conv\bigl(K \setminus \intr(L)\bigr), \\ 
	R_{\cL}(K) &:= \bigcap_{L \in \cL} R_L(K), \\
	R_\cL^i(K) & := 
	\begin{cases}
		K & \text{if} \ i=0, \\
		R_\cL\bigl(R_\cL^{i-1}(K)\bigr) & \text{if} \ i \in \natur.
	\end{cases}
\end{align*}
We call $R_L(K)$ the \term{$L$-reduction}, $R_\cL(K)$ the \term{$\cL$-closure} and $R_\cL^i(K)$ the $i$-th \term{$\cL$-closure}  of $K$. The term closure in the given context goes back to Chv\'atal \cite{MR0313080}. We remark that $R_\cL$ can be used to define the well-known \term{Chv\'atal closure} and \term{split closure} (see \cite{MR0313080}, \cite{MR1059391}, \cite[Chapter~23]{MR874114}). Furthermore, $R_L$ and $R_\cL$ provide a natural link to \term{disjunctive programming} (see also \cite{MR0376142,MR0452674,MR791175,MR1663099,MR2216796,MR2546330}). If $K$ is a polytope, then $R_L(K)$ is also a polytope, which can be obtained from $K$ by `cutting off' parts of $K$ with hyperplanes which are determined by $L$. See also Fig.~\ref{R_L:illustration} for an illustration. Let us discuss the relation of the introduced functionals to the cutting-plane theory. We shall not discuss computational aspects but rather geometric ideas of cutting-plane methods. We call a subset $\bM$ of $\real^d$ a \term{mixed-integer space} if  
\begin{equation} \label{MI:space}
	\bM=\integer^m \times \real^n
\end{equation}
for integers $m \ge 1$ and $n \ge 0$ satisfying $d=m+n$. A \term{mixed-integer linear problem} is an optimization problem having the following form:
\begin{equation} \label{optim:problem}
	\text{Find $x \in \bM$ maximizing $u^\top x$, subject to $A x \le b$,}
\end{equation}
where $u \in \rational^d$, $A$ is a rational matrix and $b$ is a rational vector. The sizes $A$ and $b$ are assumed to be chosen properly so that the expression $A x \le b$ makes sense. In geometric terms, we are given a rational polyhedron 
\begin{equation} \label{polyh:of:system}
P:= \setcond{x \in \real^d}{A x \le b}
\end{equation}
and we maximize a linear function on $P \cap \bM$. In the case $P = \conv (P \cap \bM)$ all vertices of $P$ belong to $\bM$. Thus, in this case problem \eqref{optim:problem} can be essentially reduced to a problem of linear programming by relaxing the condition $x \in \bM$ to $x \in \real^d$. In the case $P \ne \conv (P \cap \bM)$ there exist closed halfspaces $H^+$ such such that $P \cap \bM \subseteq  H^+$ and $P \not\subseteq H^+$. We call a halfspace $H^+$ as above a \term{cut} for $P$ and we call the boundary of $H^+$ a \term{cutting plane}. If we replace $P$ by $P \cap H^+$ we pass to an equivalent mixed-integer linear problem (in order to preserve the rationality of $P$, we need to assume that $H^+$ is a rational polyhedron). In analytic terms, the system of linear inequalities $A x\le b$ is modified by adding a constraint which defines the cut $H^+$. The above reduction step for $P$ helps `get closer' to the case $P = \conv(P \cap \bM)$ since this step decreases the difference between $P$ and $\conv(P \cap \bM)$. The reduction of $P$ to $P \cap H^+$ is the basic computational step of the \term{cutting-plane methods} (see also \cite{MR1699321,MR1922341,MR2176841,BertWeiBook} for further information). It turns out that, for deriving cuts, functionals $R_L$ can be used. We call a $d$-dimensional closed convex set $L$ in $\real^d$ an \term{$\bM$-free set} if $\intr(L) \cap \bM = \emptyset$.  For an $\bM$-free set $L$ one obviously has $R_L(P) \cap \bM = P \cap \bM$. Thus, a halfspace $H^+$ satisfying $R_L(P) \subseteq H^+$ and $P \not\subseteq H^+$ is a cut for $P$ (see Fig.~\ref{fig-closure-op}). An $\bM$-free set $L$ in $\real^d$ is said to be a \term{maximal $\bM$-free set} if there exists no $\bM$-free set $L'$ with $L \varsubsetneq L'$. Among all cuts determined by $\bM$-free sets the strongest ones arise from maximal $\bM$-free sets. Thus, maximal $\bM$-free sets are of particular importance for the cutting-plane theory (see \cite{arXiv:1003.4365,arXiv:1010.1077,MR1114315} for related results and Fig.~\ref{ex-M-free} for an illustration).

\begin{figtabular}{c}
\unitlength=1.1mm
\begin{picture}(100,35)
	\put(0,-3){\includegraphics[width=100\unitlength]{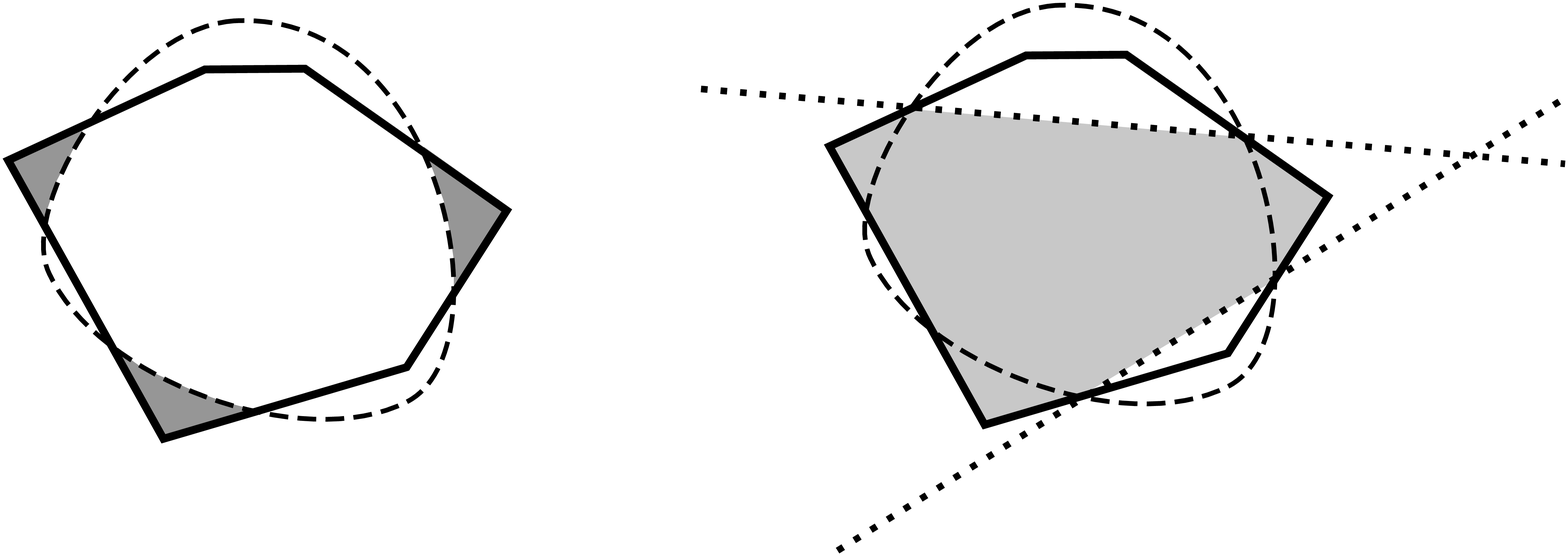}}
	\put(24,28){$L$}
	\put(-3,19){$K$}
	\put(65,17){$R_L(K)$}
\end{picture}
\\
\parbox[t]{0.90\textwidth}{\caption{\label{R_L:illustration}Illustration to the definition of $R_L(K)$ in the case $K$ is a polytope. The thick solid line is the boundary of $K$. The dashed line is the boundary of $L$. The set $K \setminus \intr(L)$ (on the left) is shaded dark. The set $R_L(K)$ (on the right) is shaded light. The dotted lines (on the right) determine how $R_L(K)$ can be constructed by `cutting off' parts of $P$.}}
\end{figtabular}

\begin{figtabular}{c}

\unitlength=1mm
\begin{picture}(90,35)
	\put(-3,0){\includegraphics[width=30\unitlength]{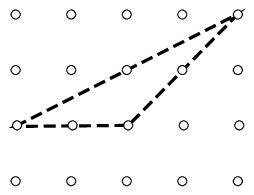}}
	\put(10,10){$L$}
	\put(55,0){\includegraphics[width=30\unitlength]{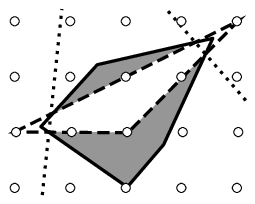}}
	\put(67,5){$P$}
\end{picture}

\\

\parbox[t]{0.90\textwidth}{\caption{\label{fig-closure-op} Generation of cutting planes using $R_L(P)$ in the case $\bM=\integer^2$. The dashed line is the boundary of $L$. The set $P \setminus \intr(L)$ is shaded dark. The dotted lines (on the right) are the two best possible cutting planes which can be generated using $R_L(P)$.}}
\end{figtabular}

\begin{figtabular}{c}
\unitlength=0.9mm
\begin{picture}(140,20)
	\put(0,-2){\includegraphics[width=30\unitlength]{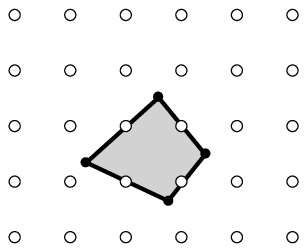}}
	\put(7,1){$P_1$}
	\put(55,-2){\includegraphics[width=30\unitlength]{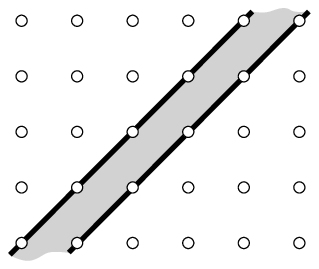}}
	\put(57,7){$P_2$}
	\put(110,-2){\includegraphics[width=30\unitlength]{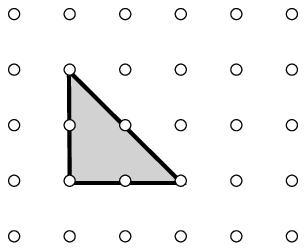}}
	\put(112,1){$P_3$}
\end{picture}

\\

\parbox[t]{0.90\textwidth}{\caption{\label{ex-M-free} Examples of maximal $\bM$-free sets in the case $\bM=\integer^2$.}}
\end{figtabular}

The original cutting-plane method is due to Gomory \cite{MR0102437,MR0174390} (see also \cite[\S23.8]{MR874114}, \cite[Chapter\,14]{MR1637890}). Gomory's method is applied to the problem \eqref{optim:problem} in the case $\bM=\integer^d$ and can be formulated in terms of the functionals $R_L$ such that $L$ is an $\bM$-free set which is an infinite `slab' (that is, the boundary of $L$ consists of two parallel hyperplanes). Balas \cite{MR0290793} introduced cutting-plane methods with respect to general $\bM$-free sets $L$.  For recent results related to cutting-plane methods based on general $\bM$-free sets we refer to \cite{MR2480507,MR2481733,JoergThesis2008,MR2555335,MR2676765,MR2593417,DelPiaWeismantel10,BasuCornuejolsMargot10}. The $\cL$-closure operation $R_\cL$ can be used to estimate the `cutting quality' of methods which generate cuts from sets $L \in \cL$. Furthermore, finite resp. infinite convergence properties of the sequences $\bigl(R_\cL^i(K) \bigr)_{i=0}^{+\infty}$ are related to finite resp. infinite convergence of cutting-plane methods based on $\cL$. With a view toward algorithmic applications, it is natural to ask for a `convenient' description of $R_L$ resp. $R_\cL$ and for conditions on $L$ resp. $\cL$ under which  such a description can be given by finite data. In particular, it is important to know whether for a given $\cL$ the functional $R_\cL$ maps polyhedra to polyhedra (or rational polyhedra to rational polyhedra). The above questions about the properties of $R_L$ and $R_\cL$ were addressed by Andersen, Louveaux and Weismantel \cite{MR2676765}. The convergence of sequences of the form $\bigl(R_\cL^i(K)\bigr)_{i=0}^{+\infty}$ was studied by Owen and Mehrotra \cite{MR1814548} and Del Pia and Weismantel \cite{DelPiaWeismantel10}. In this manuscript we generalize a part of results given in \cite{MR2676765} and \cite{MR1814548}. Furthermore, we also present new results which serve as a natural supplement. In contrast to \cite{MR1814548,MR2676765,DelPiaWeismantel10} we do not restrict considerations to the classes $\cL$ consisting of $\bM$-free sets only. In most of the cases our assumptions on $\cL$ do not involve any mixed-integer space $\bM$. Such a more general setting might be of interest for cutting-plane theory, since it seems possible that cutting-plane methods based on sets which are not necessarily $\bM$-free can also be introduced. In the purely integer case $\bM=\integer^d$, sets $L$ with a fixed positive number of interior integer points could be a natural choice (see also results from \cite{MR651251,MR688412,MR1138580,MR1996360,arXiv:1103.0629} on the geometry of such sets). For example,  one can consider a set $L$ with precisely one interior integer point $z$. With this choice, a cut generated by $L$ removes at most one point of $P \cap \bM$, namely the point $z$ (we recall that $P$ is the polyhedron defined by \eqref{polyh:of:system}). A possible design template would be that a cutting-plane method based on sets $L$ as described above keeps the track of the `best' point removed, that is, the point $z$ with maximal $u^\top z$ among all points which were removed from $P \cap \bM$ during the execution of the method. The aim of the method would be to change $P$ iteratively finally arriving at the situation $P=\emptyset$ or at the situation where $x^\ast \in P$ yielding $\max_{x \in P} u^\top x$ and belonging to $\bM$ can be found. Having riched such a situation, the method can easiliy determine the optimal solution (which is either the best point that was removed or the point $x^\ast$).

Let us give an overview of the main results of the manuscript. In Theorem~\ref{descr:R} we study the properties of $R_L(K)$. In particular, we describe the set of extreme points and the recession cone of $R_L(K)$. In Theorem~\ref{polyhedrality:characterization} we characterize those $L$ for which $R_L$ maps polyhedra to polyhedra. In Theorem~\ref{polyhedrality:of:closures} we consider $\cL$ consisting of rational polyhedra and present a condition on $\cL$ under which $R_\cL$ maps rational polyhedra to rational polyhedra. Even more generally, Theorem~\ref{polyhedrality:of:closures} asserts that, under certain assumptions on $\cL$, the set $R_\cL(P)$ can be `finitely generated', that is, $R_\cL(P) = R_{\cL'}(P)$ for some finite subclass $\cL'$ of $\cL$. The study of $R_\cL$ in the case of $\cL$ consisting of maximal $\bM$-free sets is of particular importance. Therefore, in Theorem~\ref{max-fac-width-of-max-lat-free} we give a simple formulation of the condition on $\cL$ appearing in Theorem~\ref{polyhedrality:of:closures} for this particular case. Theorems~\ref{descr:R} and \ref{polyhedrality:of:closures} generalize the main results of \cite{MR2676765}, while Theorems~\ref{polyhedrality:characterization} and \ref{max-fac-width-of-max-lat-free} are (to the best of our knowledge) new. In Theorem~\ref{convergence:thm} we describe the limit of $R_\cL^i(K)$, as $i \rightarrow +\infty$, under some weak assumptions on $K$ and $\cL$. Theorem~\ref{convergence:thm} is related to Theorem~2 from \cite{DelPiaWeismantel10}. Corollary~\ref{convergence:bounded:case}, which follows from Theorem~\ref{convergence:thm}, is a convergence result extending Theorem~3 from \cite{MR1814548}.

Our proofs use standard tools of affine convex geometry (facial structure of convex sets, recession cones and duality). In the proof of Theorem~\ref{polyhedrality:of:closures} we use the Gordan-Dickson lemma.  The manuscript is organized as follows. Section~\ref{prelim} gives necessary preliminary information, Section~\ref{sect:results} contains the formulations of the main results and  Section~\ref{sect:proofs} presents the proofs.

\section{Preliminaries} \label{prelim}

\subsection{Convex sets}

For information on convex geometry we refer to \cite{MR1216521,MR0274683,MR2335496}.  The elements of $\real^d$ are defined to be columns of $d$ real numbers. The origin of $\real^d$ is denoted by $o$. By $\sprod{\dotvar}{\dotvar}$ we denote the standard scalar product in $\real^d$. We use the functionals $\aff$ (affine hull), $\cone$ (conical hull), $\conv$ (convex hull), $\cl$ (closure), $\intr$ (interior), $\lin$ (linear hull), $\vol$ ($d$-dimensional volume). For $a, b \in \real^d$, we define $[a,b]:=\setcond{(1-t) a + t b}{ 0 \le t \le 1}$. If $a \ne b$, then $[a,b]$ is called a \term{segment} in $\real^d$, and $a$ and $b$ are called the \term{endpoints} of $[a,b]$.

 We use $\relintr(X)$ to denote the \term{relative interior} of $X \subseteq \real^d$, i.e., the interior of $X$ with respect to the Euclidean topology of the affine space $\aff(X)$. If $C$ is a convex set in $\real^d$, then the dimension $\dim (C) $ of $C$ is defined to be the dimension of $\aff(C)$.  For $X, Y \subseteq \real^d$ and $t \in \real$ we introduce
\begin{align*}
	X+Y := & \setcond{x+y}{x \in X, \ y \in Y} & &\text{\term{(Minkowski sum of $X$ and $Y$)},} \\
	X-Y := & \setcond{x-y}{x \in X, \ y \in Y} & &\text{\term{(Minkowski difference of $X$ and $Y$)}}, \\
   t X := & \setcond{t x }{x \in X} & & \text{\term{(scaling of $X$ by factor $t$)}}, \\
	-X := & \setcond{-x}{x \in X} & & \text{\term{(reflection of $X$ in the origin)}.}
\end{align*}

For $a \in \real^d$ we also use the notations $X+a:=X+\{a\}$ and $X-a:=X-\{a\}$. Let $K$ be a nonempty closed convex set in $\real^d$. With $K$ we associate the following functions and sets.
\begin{align}
	h(K,u)  := & \sup_{x \in K} \sprod{u}{x} & & \text{\emph{(support function)}}, \\
	w(K,u)  := & \sup_{x \in K} \sprod{u}{x} - \inf_{ x \in K} \sprod{u}{x} & & \text{\emph{(width function),}} \label{width:function:def} \\
\|u\|_K  := & \inf \setcond{t \ge 0}{u \in t K} & & \text{\emph{(gauge function),}} \\
	K^\circ := & \setcond{y \in \real^d}{\sprod{y}{x} \le 1 \quad \forall \ x \in K} & & \text{\emph{(polar set),}} \\
	\label{rec:K:def}
 	\rec(K) :=& \setcond{y \in \real^d}{x+ t y \in K \quad \forall \ x \in K \ \forall \ t \ge 0 } & & \text{\emph{(recession cone),}} \\
 	\label{lineal:def}
 	\lineal(K) := & \setcond{y \in \real^d}{x+ t y \in K \quad \forall \ x \in K \ \forall \ t \in \real } & & \text{\emph{(lineality space).}}
\end{align}
Above $u \in \real^d$. The gauge function $\|u\|_K$ and the polar set $K^\circ$ are introduced under the assumption $o \in K$. The values of the functions $h(K,u), w(K,u), \|u\|_K$ lie in $\real \cup \{+\infty\}$. It is known that the definitions \eqref{rec:K:def} and \eqref{lineal:def} remain unchanged if one replaces the existential quantifier over $x$ by the universal quantifier. One has
\begin{align}
 \|u\|_K & = h(K^\circ,u), \label{gauge:sup:f:eq}\\
 (K^\circ)^\circ & = K, \label{double:polar:eq} \\
 w(K,u) & = h(K,u) + h(K,-u) = h(K-K,u) \label{width:func:equations},
\end{align}
where \eqref{gauge:sup:f:eq} and \eqref{double:polar:eq} are stated under the assumption $o \in K$. We shall use the following simple proposition. 
\begin{proposition} \label{ray-prop}
	Let $K$ be a closed convex set in $\real^d$ with $o \in \intr(K)$. Let $u \in \real^d \setminus \{o\}$. Consider the ray $I:=\setcond{t u}{t \ge 0}$. Then $\|u\|_K < + \infty$ and, furthermore, the following statements hold.
	\begin{enumerate}[I.]
		\item One has $\|u\|_K = 0$ if and only if the ray $I$ is contained in $K$.
		\item If $\|u\|_K > 0$, then $u / \|u\|_K$ is the unique point of the intersection of $\bd(K)$ and $I$.
	\end{enumerate}
\end{proposition}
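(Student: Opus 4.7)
The plan is to first establish $\|u\|_K < +\infty$ by exploiting that $o \in \intr(K)$: pick $\varepsilon > 0$ with the Euclidean ball $B(o,\varepsilon)$ contained in $K$; then $\varepsilon u / \|u\|_2 \in K$, so $u \in (\|u\|_2/\varepsilon) K$, which gives the explicit bound $\|u\|_K \le \|u\|_2/\varepsilon$. A useful preliminary observation, obtained directly from convexity and $o \in K$, is that the set $T := \setcond{t \ge 0}{u \in t K}$ is upward closed: if $u = tk$ with $k \in K$ and $s > t$, then $k' := (t/s)k + (1-t/s)o \in K$ satisfies $sk' = u$. Moreover, since $K$ is closed, $T$ is closed, hence $T = [\lambda,+\infty)$ with $\lambda = \|u\|_K$ whenever $\lambda > 0$, and $T = (0,+\infty)$ or $T = [0,+\infty)$ when $\lambda = 0$.

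For part~I, if $\|u\|_K = 0$, then for every $t > 0$ we have $u \in t K$, i.e. $(1/t)u \in K$. Letting $t$ range over $(0,+\infty)$, this says every positive multiple of $u$ lies in $K$; combined with $o \in K$ and convexity, the whole ray $I$ lies in $K$. The converse is immediate: if $I \subseteq K$, then $(1/t)u \in K$ for every $t > 0$, so $\|u\|_K = 0$.

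For part~II, by the preliminary observation $\lambda := \|u\|_K$ is attained in $T$, so $u/\lambda \in K$, and in particular $u/\lambda \in I \cap K$. To see $u/\lambda \in \bd(K)$, argue by contradiction: if $u/\lambda \in \intr(K)$, then some open ball around $u/\lambda$ lies in $K$, so $(1+\delta)(u/\lambda) \in K$ for small $\delta > 0$, giving $u \in (\lambda/(1+\delta)) K$ and contradicting minimality of $\lambda$. For the uniqueness, I would use the standard convex-geometry lemma that if $a \in \intr(K)$ and $b \in K$ then $[a,b) \subseteq \intr(K)$ (proved by a small neighbourhood argument around each convex combination, based on $a \in \intr(K)$). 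Applied to $a=o$ and $b = u/\lambda$, this yields $[o,u/\lambda) \subseteq \intr(K)$, so none of the points $tu$ with $0 \le t < 1/\lambda$ lies in $\bd(K)$; and for $t > 1/\lambda$ one has $tu \notin K$ since $1/t < \lambda$ is not in $T$. Hence $u/\lambda$ is the only point of $\bd(K) \cap I$.

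The only mildly non-trivial ingredient is the segment lemma used in the uniqueness step; everything else is routine manipulation of the definition of the gauge function together with closedness of $K$ and the observation that $T$ is an upward-closed interval. I do not expect any genuine obstacle.
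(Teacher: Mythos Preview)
Your argument is correct. The paper does not actually prove this proposition: it is introduced with the phrase ``We shall use the following simple proposition'' and left without proof, so there is nothing to compare your approach against. Your reasoning is the standard one and is complete; the only (inconsequential) slip is that since $u \ne o$ one always has $0 \notin T$, so in the case $\lambda = 0$ the set $T$ is necessarily $(0,+\infty)$ rather than possibly $[0,+\infty)$. The segment lemma you invoke (if $a \in \intr(K)$ and $b \in K$ then $[a,b) \subseteq \intr(K)$) is indeed standard and appears, for instance, in Schneider's book cited in the paper.
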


The following result can be found in \cite[Theorem~1.1.13]{MR1216521} and \cite[Theorem~6.9]{MR0274683}.

\begin{theorem} \label{char:relint}
	Let $n \in \natur$, $p_1,\ldots,p_n \in \real^d$ and $p \in \real^d$. Then $p \in \relintr \bigl( \conv(\{p_1,\ldots,p_n \})\bigr)$ if and only if there exist $\lambda_1,\ldots,\lambda_n > 0$ with $\lambda_1+ \cdots + \lambda_n =1$ and $p= \lambda_1 p_1 + \cdots + \lambda_n p_n$.
\end{theorem}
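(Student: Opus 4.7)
My plan is to prove the two implications separately, using the characterization that $p \in \relintr(C)$ for a convex set $C$ if and only if for every $q \in \aff(C)$ there exists $t > 0$ such that $p + t(p - q) \in C$. Throughout, set $P := \conv(\{p_1,\ldots,p_n\})$ and recall that $\aff(P) = \setcond{\sum_{i=1}^n \mu_i p_i}{\mu_1,\ldots,\mu_n \in \real, \ \sum_{i=1}^n \mu_i = 1}$.

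For the ``if'' direction, assume $p = \sum_{i=1}^n \lambda_i p_i$ with $\lambda_i > 0$ and $\sum_i \lambda_i = 1$. Pick an arbitrary $q \in \aff(P)$ and write $q = \sum_i \mu_i p_i$ with $\sum_i \mu_i = 1$. Then
\[
  p + t(p-q) = \sum_{i=1}^n \bigl(\lambda_i + t(\lambda_i - \mu_i)\bigr) p_i,
\]
and the coefficients still sum to $1$. Since all $\lambda_i$ are strictly positive, the coefficients are nonnegative for all sufficiently small $t > 0$, and hence $p + t(p - q) \in P$. By the criterion above this yields $p \in \relintr(P)$.

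For the ``only if'' direction, assume $p \in \relintr(P)$. Since each $p_i$ lies in $\aff(P)$, for every $i \in \{1,\ldots,n\}$ there exists $\epsilon_i > 0$ such that $q_i := p + \epsilon_i(p - p_i) = (1+\epsilon_i) p - \epsilon_i p_i$ belongs to $P$. Writing $q_i$ as a convex combination $q_i = \sum_j \beta_{ij} p_j$ (with $\beta_{ij} \ge 0$ and $\sum_j \beta_{ij} = 1$) and solving for $p$ yields
\[
  p = \sum_{j=1}^n \frac{\beta_{ij} + \epsilon_i \, \delta_{ij}}{1 + \epsilon_i} \, p_j,
\]
which is a convex combination in which the coefficient of $p_i$ is at least $\epsilon_i / (1+\epsilon_i) > 0$. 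Averaging these $n$ representations of $p$ (one for each $i$) produces a single convex combination $p = \sum_i \lambda_i p_i$ in which every $\lambda_i$ is strictly positive, completing the proof.

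There is no real obstacle here; the statement is a classical tool of convex analysis. The only slightly subtle step is the averaging trick in the ``only if'' direction, which turns the family of representations with \emph{some} positive coefficient into one with \emph{all} positive coefficients. Everything else reduces to the equivalent pushing-past-$p$ characterization of relative interior points.
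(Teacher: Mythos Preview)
Your proof is correct. The paper does not supply its own argument for this statement; it merely cites it as a known fact from Schneider's and Rockafellar's textbooks, so there is no in-paper proof to compare against. Your two-direction argument via the ``push past $p$'' characterization of the relative interior, together with the averaging trick to force all coefficients positive, is exactly the standard textbook proof.
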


The following is a version of Carath\'eodory's theorem. 
\begin{theorem} \header{Carath\'eodory's theorem}. \label{strong:caratheodory}
	Let $X \subseteq \real^d$ and let $x \in \conv(X)$. Then there exists an affinely independent set $Y \subseteq X$ such that $x \in \relintr \bigl(\conv(Y)\bigr)$.
\end{theorem}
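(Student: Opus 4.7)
The plan is to prove the statement by extracting a \emph{minimal} positive-coefficient convex representation of $x$ and showing that the points appearing in it must be affinely independent, after which Theorem~\ref{char:relint} delivers the relative-interior conclusion for free.

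First I would fix an arbitrary finite representation $x = \sum_{i=1}^n \lambda_i p_i$ with $p_1,\ldots,p_n \in X$, $\lambda_i > 0$ for all $i$, and $\lambda_1 + \cdots + \lambda_n = 1$. Such a representation exists by definition of $\conv(X)$ (discarding any points whose coefficient equals zero). Among all such representations, I would then pick one for which the number $n$ of terms is as small as possible, and I would denote the corresponding points $p_1,\ldots,p_n$ and coefficients $\lambda_1,\ldots,\lambda_n$.

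Next I would show that $p_1,\ldots,p_n$ are affinely independent. Suppose for contradiction that they are not. Then there exist reals $\mu_1,\ldots,\mu_n$, not all zero, with $\mu_1 + \cdots + \mu_n = 0$ and $\mu_1 p_1 + \cdots + \mu_n p_n = o$. After possibly replacing $\mu_i$ by $-\mu_i$, I may assume that at least one $\mu_i$ is strictly positive. Setting
\[
    t := \min \setcond{\lambda_i/\mu_i}{i \in \{1,\ldots,n\}, \ \mu_i > 0} > 0
\]
and writing $\lambda_i' := \lambda_i - t \mu_i$, one checks that $\lambda_i' \ge 0$ for every $i$, that $\sum_{i=1}^n \lambda_i' = 1$, that $\sum_{i=1}^n \lambda_i' p_i = x$, and that $\lambda_{i_0}' = 0$ for an index $i_0$ attaining the minimum. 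Discarding the term with index $i_0$ yields a positive-coefficient convex representation of $x$ using fewer than $n$ points of $X$, contradicting minimality of $n$. Hence $p_1,\ldots,p_n$ are affinely independent.

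Finally, the affine independence of $p_1,\ldots,p_n$ together with the equality $x = \lambda_1 p_1 + \cdots + \lambda_n p_n$ with $\lambda_i > 0$ and $\sum \lambda_i = 1$ satisfies the hypothesis of Theorem~\ref{char:relint}; that theorem then yields $x \in \relintr\bigl(\conv(\{p_1,\ldots,p_n\})\bigr)$, so $Y := \{p_1,\ldots,p_n\}$ is the desired subset of $X$. I expect the only mildly delicate point to be the choice of $t$ above: one has to make sure both that $t > 0$ (which requires some $\mu_i > 0$, arranged by the sign flip) and that subtracting $t \mu_i$ keeps every coefficient nonnegative (which is exactly why $t$ is taken as the minimum over the set of positive $\mu_i$'s). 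Everything else is a routine application of the preceding theorem.
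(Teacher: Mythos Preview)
Your argument is correct and matches the paper's intended route: the paper does not spell out a proof but simply remarks that the statement is a direct consequence of the standard Carath\'eodory theorem together with Theorem~\ref{char:relint}, and your minimal-representation argument is precisely the standard proof of Carath\'eodory with the extra observation that minimality forces all coefficients to be strictly positive, so that Theorem~\ref{char:relint} applies. Two cosmetic remarks: since $\sum_i \mu_i = 0$ with the $\mu_i$ not all zero, some $\mu_i$ is automatically positive, so the sign flip is never actually needed; and when you discard the index $i_0$, there may be further indices with $\lambda_i' = 0$ in case of ties, so strictly speaking you should discard all vanishing terms---either way you end up with fewer than $n$ positive terms, and the contradiction goes through.
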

Theorem~\ref{strong:caratheodory} is a direct consequence of the standard Carath\'eodory theorem  (see \cite[Theorem~1.1.4]{MR1216521}) and Theorem~\ref{char:relint}. A point $x$ of a convex set $C$ is said to \term{extreme} if there exists no segment $I \subseteq C$ satisfying $x \in \relint(I)$. The set of all extreme points of $C$ is denoted by $\ext(C)$. We emphasize that the notion of extreme point is introduced with respect to sets which are not necessarily closed. For a subset $X$ of $\real^d$ one has
\begin{equation} \label{ext:conv:inclusion}
	\ext \bigl(\conv (X)\bigr) \subseteq X.
\end{equation}
Inclusion \eqref{ext:conv:inclusion} can be derived from Theorem~\ref{strong:caratheodory}. The following characterization of extreme points is given in \cite[Lemma~1.4.6]{MR1216521}.
\begin{lemma} \header{Cap lemma}. \label{cap:lemma} Let $K$ be a closed convex set in $\real^d$ and $p \in K$. Then $p \in \ext(K)$ if and only if  for every open neighborhood $U$ of $p$ there exists a hyperplane $H$ such that $x$ and $K \setminus U$ lie in different open halfspaces defined by $H$. (See also Fig.~\ref{fig:cap:lemma}.)
\end{lemma}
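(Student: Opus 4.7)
The plan is to prove the two directions separately, with the backward implication being a short contradiction and the forward implication relying on a Carath\'eodory-based reduction together with strict separation from a compact convex set.

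For the backward direction I would argue by contraposition. Suppose $p \notin \ext(K)$; then there exist distinct $a,b \in K$ with $p \in \relint([a,b])$. Choose $U$ to be a small open ball around $p$ containing neither $a$ nor $b$. If a hyperplane $H=\{x:\sprod{u}{x}=\alpha\}$ as in the lemma existed, normalized so that $\sprod{u}{p}<\alpha$, then $a,b \in K\setminus U$ would give $\sprod{u}{a}>\alpha$ and $\sprod{u}{b}>\alpha$; writing $p=(1-t)a+tb$ with $t\in(0,1)$ and taking the corresponding convex combination would force $\sprod{u}{p}>\alpha$, a contradiction.

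For the forward direction, fix $p \in \ext(K)$ and an open neighborhood $U$ of $p$. Choose a closed ball $B=\cl(B_r(p))$ with $B \subseteq U$, and consider the set $C:=\conv\bigl(K\cap\bd(B)\bigr)$. The set $K\cap\bd(B)$ is compact as a closed subset of the sphere $\bd(B)$, so $C$ is a compact convex subset of $\real^d$. If $K\cap\bd(B)=\emptyset$, then by convexity $K\subseteq\intr(B)\subseteq U$ (any $x\in K$ outside $B$ would force $[p,x]$ to meet $\bd(B)$), so $K\setminus U=\emptyset$ and any hyperplane separates trivially. Otherwise, I would first show $p\notin C$: by Theorem~\ref{strong:caratheodory} and Theorem~\ref{char:relint}, any point of $C$ lies in $\relintr\bigl(\conv(Y)\bigr)$ for some affinely independent $Y\subseteq K\cap\bd(B)$. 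Since $p\in\intr(B)$, the case $|Y|\le 1$ is impossible, and for $|Y|\ge 2$ one splits a strictly positive convex combination representing $p$ into $p=\lambda y_1+(1-\lambda)z$ with $\lambda\in(0,1)$ and $z\in\conv(Y)\subseteq K$, exhibiting a segment in $K$ whose relative interior contains $p$; this contradicts $p\in\ext(K)$. Strict separation of the point $p$ from the compact convex set $C$ then yields $u\in\real^d\setminus\{o\}$ and $\alpha\in\real$ with $\sprod{u}{p}<\alpha$ and $\sprod{u}{y}>\alpha$ for all $y\in C$.

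The final step is to verify that $H=\{x:\sprod{u}{x}=\alpha\}$ separates $p$ from all of $K\setminus U$. Since $K\setminus U\subseteq K\setminus B$, it suffices to show $\sprod{u}{x}>\alpha$ for every $x\in K\setminus B$. For such $x$, the segment $[p,x]\subseteq K$ starts in $\intr(B)$ and ends outside $B$, so by continuity it meets $\bd(B)$ at some $y=(1-t)p+tx$ with $t\in(0,1]$; convexity of $K$ places $y\in K\cap\bd(B)\subseteq C$, hence $\sprod{u}{y}>\alpha$. If $\sprod{u}{x}\le\alpha$ held, the identity $\sprod{u}{y}=(1-t)\sprod{u}{p}+t\sprod{u}{x}$ together with $\sprod{u}{p}<\alpha$ would force $\sprod{u}{y}<\alpha$, a contradiction. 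The main conceptual obstacle I foresee is recognizing that one should separate $p$ from the compact set $\conv\bigl(K\cap\bd(B)\bigr)$ rather than from $K\setminus U$ directly; the convex hull of $K\setminus U$ need not be closed (if $K$ is unbounded), but the sphere trick reduces the problem to the classical strict separation of a point from a compact convex set.
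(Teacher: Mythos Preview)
Your proof is correct. The paper does not prove this lemma at all; it simply cites \cite[Lemma~1.4.6]{MR1216521} (Schneider's book) and moves on, so there is no argument in the paper to compare against. Your approach---separating $p$ from the compact convex set $\conv\bigl(K\cap\bd(B)\bigr)$ rather than from $K\setminus U$ directly, and then pushing the strict inequality out along segments from $p$---is the standard one and is essentially what one finds in Schneider's text. One cosmetic remark: in the statement as printed the letter ``$x$'' is a typo for ``$p$''; you read it correctly.
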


\begin{figtabular}{c}
\unitlength=1.1mm
\begin{picture}(44,20)
	\put(4,-3){\includegraphics[width=40\unitlength]{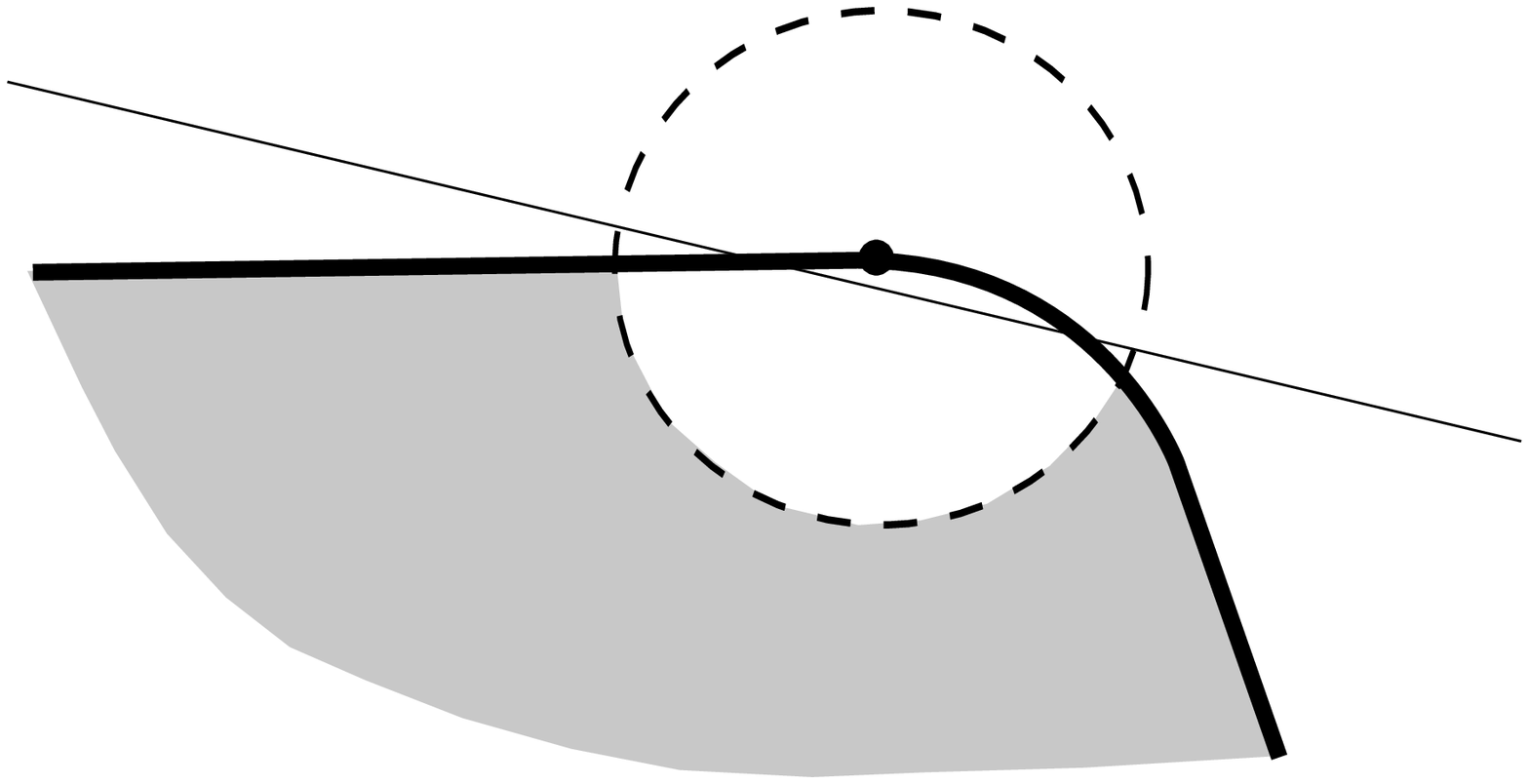}}
	\put(26,12){$p$}
	\put(0,9){$K$}
	\put(5,16){$H$}
	\put(33,16){$U$}
\end{picture}
\\
\parbox[t]{0.90\textwidth}{\caption{\label{fig:cap:lemma} Illustration to Lemma~\ref{cap:lemma} for the case $d=2$. The figure depicts a choice of $H$ in the case that $p \in \ext(K)$ and $U$ is a small neighborhood of $p$. The thick solid line is the boundary of $K$, the dashed line is the boundary of $U$ and the shaded region is $K \setminus U$. In the figure $p$ is chosen to be an endpoint of a one-dimensional face of $K$.}}
\end{figtabular}

 A convex subset $F$ of a closed convex set $K \subseteq \real^d$ is said to be a \term{face} of $K$ if for every segment $I$ lying in $K$ and satisfying $\relintr(I) \cap F \ne \emptyset$ one necessarily has $I \subseteq F$. Every face is necessarily a closed set. Furthermore, directly from the definition it can be seen that if $C$ is a convex subset of $K$ such, $F$ is a face of $K$ and $\relintr(C) \cap \relintr(F) \ne \emptyset$, then $C \subseteq F$. It is known that every closed convex set $K$ in $\real^d$ is the disjoint union of the relative interiors of all faces of $K$ (see \cite[Theorem~2.1.2]{MR1216521}). Furthermore, if $F_2$ is a face of $K$ and $F_1$ is a face of $F_2$, then $F_1$ is a face of $K$. Given an integer $i \ge 0$ by $\ext_i(K)$ we denote the union of all faces of $K$ of dimension at most $i$. The set $\ext_i(K)$ is said to be the \term{$i$-skeleton} of $K$. One has $\ext(K) = \ext_0(K)$. The one-dimensional faces of $K$ which are rays are called \term{extreme rays} of $K$. By $\extr(K)$ we denote the union of all extreme rays of $K$. 

A closed convex set $K$ is said to be \term{line-free} if $K$ does not contain lines. Every closed convex set is a direct sum of a linear space and a line-free closed convex set. By this, in most cases there is no loss of generality in considering line-free closed convex sets only.  The following result can be found in \cite[Theorem~1.4.3 and Corollary~1.4.4]{MR1216521}.

\begin{theorem} \header{Decomposition theorem for convex sets}. \label{decomp:conv:sets}
	Let $K$ be a line-free closed convex set in $\real^d$. Then the following equalities hold.
	\begin{align}
		K & = \conv \bigl(\ext(K) \cup \extr(K)\bigr), \label{mink:union:rep}\\ 
		K & = \conv \bigl(\ext (K)\bigr) + \rec(K). \label{mink:sum:rep}
	\end{align}
\end{theorem}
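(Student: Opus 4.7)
The plan is to establish \eqref{mink:sum:rep} first by induction on $\dim(K)$, and then to derive \eqref{mink:union:rep} by decomposing $\rec(K)$ via its extreme rays together with a rescaling trick.

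For \eqref{mink:sum:rep}, the case $\dim(K)=0$ is immediate. For the inductive step, fix $x \in K$ and choose a direction $u$ so that the line $\ell = x + \real u$ meets $K$ in more than one point; such a direction exists because $\dim(K) \ge 1$. Since $K$ is line-free, $K \cap \ell$ is either a segment $[a,b]$ or a ray with endpoint $y$. In the segment case, both $a$ and $b$ lie in the relative boundary of $K$, hence each sits in a proper face of $K$; such faces are themselves line-free, their extreme points belong to $\ext(K)$, and their recession cones lie in $\rec(K)$, so the inductive hypothesis places $a,b \in \conv(\ext(K)) + \rec(K)$, and by convexity $x \in [a,b]$ is there too. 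In the ray case, the direction of the ray lies in $\rec(K)$ by the remark following \eqref{rec:K:def}; the endpoint $y$ is handled as above, and the displacement $x - y \in \rec(K)$ is absorbed into the recession summand.

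To obtain \eqref{mink:union:rep}, observe first that $\rec(K)$ is itself a line-free closed convex cone. Slicing $\rec(K)$ by a hyperplane not through $o$ that intersects $\rec(K)$ in a bounded set and applying the already-proved \eqref{mink:sum:rep} to that bounded section yields that $\rec(K)$ equals the conic hull of the directions of its extreme rays from $o$. Second, for each extreme ray $\rho$ of $\rec(K)$ with direction $u$, choose a linear functional whose maximum over $\rec(K)$ is attained exactly on $\rho$; the face $F$ of $K$ on which this functional attains its maximum over $K$ satisfies $\rec(F) = \rho$, hence $F$ is a ``prism'' in direction $u$ over a bounded convex base, and any $v \in \ext(F) \subseteq \ext(K)$ spawns an extreme ray $v + \rho$ of $K$ with direction $u$. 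Thus every extreme-ray direction of $\rec(K)$ is realized by a bona fide extreme ray of $K$.

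Finally, given $x \in K$, use \eqref{mink:sum:rep} to write $x = \sum_i \lambda_i v_i + \sum_j \alpha_j u_j$, where the first sum is a convex combination of extreme points $v_i \in \ext(K)$ and the second a non-negative combination of extreme-ray directions $u_j$, each emanating from an extreme point $v_j^\ast \in \ext(K)$. After enlarging the list $\{v_i\}$ by the $v_j^\ast$ (with initially zero coefficients), apply the identity $\alpha_j u_j = (\alpha_j/s)(v_j^\ast + s u_j) - (\alpha_j/s) v_j^\ast$ with $s > 0$ chosen large enough that all adjusted coefficients remain non-negative; the sum of coefficients is preserved at $1$, exhibiting $x$ as a convex combination of points from $\ext(K) \cup \extr(K)$. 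I expect the main obstacle to be the middle step: producing, from each extreme ray of $\rec(K)$, a genuine extreme ray of $K$ via a judicious supporting hyperplane, where the interplay between the facial structures of $K$ and $\rec(K)$ has to be handled carefully.
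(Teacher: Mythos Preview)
The paper does not prove this theorem; it is quoted from Schneider's monograph (see the sentence preceding the statement). So there is no in-paper proof to compare against.

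Your inductive proof of \eqref{mink:sum:rep} is correct. The derivation of \eqref{mink:union:rep}, however, fails at precisely the step you flagged: the intermediate claim that every extreme ray of $\rec(K)$ is the direction of some extreme ray of $K$ is false in general. Take $K=\{(x,y,z)\in\real^3 : z\ge x^2+y^2\}$. This $K$ is closed and line-free, $\rec(K)$ is the single ray $\{(0,0,z):z\ge 0\}$, yet $\extr(K)=\emptyset$ since the bounding paraboloid is strictly convex and $K$ has no one-dimensional faces. (Equality \eqref{mink:union:rep} still holds here because $K=\conv(\ext(K))$.) Your exposing-functional construction also breaks for a second, independent reason in this example: any linear functional vanishing on $\rec(K)$ has the form $(x,y,z)\mapsto ax+by$ and is unbounded above on $K$, so no maximising face $F$ exists. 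And even when $\rec(K)$ has many extreme rays, not all of them need be exposed, so the functional you ask for may fail to exist from the outset.

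The clean fix is to run the same induction directly for \eqref{mink:union:rep}, dropping the detour through $\rec(K)$. Since $K$ is line-free, so is $\rec(K)$; hence for $\dim(K)\ge 2$ one can choose a nonzero $u$ parallel to $\aff(K)$ with $u\notin\rec(K)\cup(-\rec(K))$ (separate $\rec(K)\setminus\{o\}$ from $o$ by a linear hyperplane and pick $u$ in that hyperplane). With such $u$ the line $x+\real u$ through $x\in\relint(K)$ meets $K$ in a bounded segment whose endpoints lie in proper faces, and induction applies via $\ext(F)\subseteq\ext(K)$ and $\extr(F)\subseteq\extr(K)$ for every face $F$ of $K$. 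The cases $\dim(K)\le 1$ and $x\in\relbd(K)$ are immediate. No rescaling trick is needed.
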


Given $\rho \ge 0$ and $x \in \real^d$ by $B(x,\rho)$ we denote the closed Euclidean ball of radius $\rho$ centered at $x$.
For closed convex sets $K_1,K_2$ in $\real^d$ the \term{Hausdorff distance} $\delta(K_1,K_2)$ of $K_1$ and $K_2$ is defined by
\[
	\delta(K_1,K_2) := \min \setcond{\rho \ge 0}{K_1 \subseteq K_2+B(o,\rho) \ \text{and} \  K_2 \subseteq K_1+ B(o,\rho)}.
\]
In particular, one has $\delta(K_1,K_2)=+\infty$ if one of the two sets $K_1, K_2$ is empty and the other is not. The Hausdorff distance is a metric on the class of closed convex sets in $\real^d$ (for further information, see \cite[\S\,1.8]{MR1216521}). Some of our results deal with convergence of sequences of closed convex sets. The convergence of such sequences can be introduced in several ways (see \cite[\S\S17.I, \S\S21.VII and \S\S29.VI]{MR0217751}). Under restriction to bounded closed convex sets, the convergence with respect to the Hausdorff distance is the standard choice. In the class of closed convex sets (that is, without boundedness restriction) also other forms of convergence are natural.

A sequence $(K_i)_{i \in \natur}$ of convex sets in $\real^d$ is said to be \term{decreasing} (in the nonstrict sense) if $K_{i+1} \subseteq K_i$ for every $i \in \natur$. Given a decreasing sequence $(K_i)_{i \in \natur}$ as above the set $K:=\bigcap_{i \in \natur} K_i$ is the limit of $(K_i)_{i \in \natur}$ in the sense of the definition given in \cite[\S\S29.VI]{MR0217751}. However, in general, $K$ is not necessarily the limit of $(K_i)_{i \in \natur}$ with respect to the Hausdorff distance. In fact, take $d=2$ and $K_i:= \cone(\{e_1, e_1 + \frac{1}{i} e_2\})$ (where $e_1, e_2$ is the standard basis of $\real^2$). Then $K=\cone(\{e_1\})$, but $\delta(K,K_i)=+\infty$ for every $i \in \natur$.  The following lemma (see \cite[Lemma~1.8.1]{MR1216521}) shows that under boundedness assumption examples as above do not exist.

\begin{lemma} \header{Convergence of a decreasing set sequence}. \label{decr:convergence}
	Let $(K_i)_{i \in \natur}$ be a decreasing sequence of nonempty compact convex sets in $\real^d$. Then $K:= \bigcap_{i=1}^{+\infty} K_i$ is a nonempty compact convex set and $K_i$ converges to $K$ with respect to the Hausdorff distance, as $i \rightarrow +\infty$.
\end{lemma}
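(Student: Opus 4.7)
The plan is to split the statement into two parts: first, to verify that $K$ is nonempty, compact and convex; second, to establish the convergence $\delta(K_i, K) \to 0$.

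For the first part, I would observe that every finite intersection $\bigcap_{i=1}^N K_i$ equals $K_N$, hence is nonempty. Since $K_i \subseteq K_1$ for all $i$ and $K_1$ is compact, the finite intersection property in the compact space $K_1$ forces $K = \bigcap_{i \in \natur} K_i$ to be nonempty. Closedness, convexity and boundedness are preserved under arbitrary intersections, so $K$ is a nonempty compact convex set.

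For the convergence, one direction of the Hausdorff inclusion is trivial: since the sequence is decreasing, $K \subseteq K_i$ for every $i$, so $K \subseteq K_i + B(o,\rho)$ for every $\rho \ge 0$. Thus it suffices to show that for each $\epsilon > 0$ there exists $N \in \natur$ such that $K_i \subseteq K + B(o, \epsilon)$ for all $i \ge N$. I would argue by contradiction: suppose some $\epsilon > 0$ admits an increasing sequence of indices $i_1 < i_2 < \cdots$ together with points $x_j \in K_{i_j}$ satisfying $x_j \notin K + B(o,\epsilon)$, i.e.\ $\mathrm{dist}(x_j, K) \ge \epsilon$.

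All these $x_j$ lie in the compact set $K_1$, so by Bolzano--Weierstrass we may pass to a convergent subsequence $x_{j_k} \to x^\ast$. For any fixed $n \in \natur$, once $i_{j_k} \ge n$ we have $x_{j_k} \in K_{i_{j_k}} \subseteq K_n$, and closedness of $K_n$ yields $x^\ast \in K_n$. Since this holds for every $n$, we conclude $x^\ast \in K$. On the other hand, the distance to $K$ is a continuous function, so $\mathrm{dist}(x^\ast, K) = \lim_k \mathrm{dist}(x_{j_k}, K) \ge \epsilon > 0$, a contradiction. The main (and only non-routine) obstacle is the subsequence extraction, which is handled cleanly by the compactness of $K_1$; everything else is bookkeeping about intersections and the definition of Hausdorff distance.
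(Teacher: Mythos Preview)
Your argument is correct; this is the standard compactness proof. Note that the paper does not actually supply a proof of this lemma but merely cites \cite[Lemma~1.8.1]{MR1216521}, so there is no ``paper's own proof'' to compare against---your write-up would serve as a self-contained substitute.
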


\subsection{Polyhedra and maximal lattice-free sets} \label{subsect:polyhedra}

A subset $P$ of $\real^d$ is called a \term{polyhedron} if $P$ is intersection of finitely many closed halfspaces (thus, the empty set is also a polyhedron). In analytic terms, $P \subseteq \real^d$ is a polyhedron if and only if there exist $a_1,\ldots,a_n \in \real^d \setminus \{o\}$ and $\alpha_1,\ldots,\alpha_n \in \real$ with $n \ge 0$ such that 
\begin{equation} \label{P:descr}
	P =\setcond{x \in \real^d}{\sprod{x}{a_i} \le \alpha_i \quad \forall \ i=1,\ldots,n}.
\end{equation}
(In the case $n=0$ one has $P=\real^d$.) Bounded polyhedra are said to be \term{polytopes}. A polyhedron $P$ in $\real^d$ is called \term{rational} if $P$ can be given by \eqref{P:descr} with $a_1,\ldots,a_n \in\rational^d \setminus \{o\}$ and $\alpha_1,\ldots,\alpha_n \in \rational$. A polyhedron $P$ is said to be \term{integral} if $P = \conv(P \cap \integer^d)$. Every integral polyhedron is necessarily also rational. If $P$ is a rational polyhedron, then $\lineal(P)$ and $\rec(P)$ are integral polyhedra. If $P$ is a polyhedron given by \eqref{P:descr} such that $o \in \intr(P)$, $P \ne \real^d$ and $\rec(P)$ is a linear space, then $\alpha_1,\ldots,\alpha_n >0$ and one has
\begin{align} 
	P^\circ = & \conv \left(\setcond{ \frac{a_i}{\alpha_i}}{i=1,\ldots,n}\right), \label{polar:over:hyp:rep} \\
	\|u\|_P  = & \max \setcond{ \frac{\sprod{a_i}{u}}{\alpha_i}}{i=1,\ldots,n}, \label{gauge:over:sup:func}
\end{align}
where $u$ is an arbitrary vector in $\real^d$. If $P$ is a polyhedron, then $\ext(P)$ is precisely the set of all vertices of $P$ and, for $i\in \{0,\ldots,d\}$, $\ext_i(P)$ is the union of all $i$-dimensional faces of $P$. One-dimensional faces of polyhedra are called \term{edges}. Every edge is either a segment or a ray. For a rational $d$-dimensional polyhedron $P$ we denote by $U(P)$ the set of all vectors $u \in \integer^d \setminus \{o\}$ such that $u$ is an outer normal to a facet of $P$ and the components of $u$ are relatively prime integers. One has
\begin{equation} \label{P:repr:U(P)}
	P = \setcond{x \in \real^d}{\sprod{x}{u} \le h(P,u) \quad \forall \ u \in U(P)}.
\end{equation}
The \term{max-facet-width} of a $d$-dimensional rational polyhedron $P$ is defined by 
\[
	\maxfw(P) = \max \setcond{w(P,u)}{u \in U(P)},
\]
where $w(P,\dotvar)$ is the width function, which was defined by \eqref{width:function:def}. It is not hard to see that $\maxfw(P) < \infty$ if and only if $\rec(P)$ is a linear space. By $\Aff(\integer^d)$ we denote the set of all affine transformations $A$ in $\real^d$ satisfying $A(\integer^d) = \integer^d$. Two sets $X$ and $Y$ in $\real^d$ are said to be \term{$\integer^d$-equivalent} if $Y = A(X)$ for some $A \in \integer^d$. If $\cX$ is a class of sets in $\real^d$, then by $\cX / \Aff(\integer^d)$ we denote the set of equivalence classes on $\cX$ with respect to the $\integer^d$-equivalence. The notion of max-facet-width is invariant under $\integer^d$ equivalence, that is,
\[
	\maxfw(P) = \maxfw\bigl(A(P)\bigr)
\]
for every rational polyhedron $P$ in $\real^d$ and every $A \in \Aff(\integer^d)$. The following theorem is well-known (see \cite[Theorem~2]{MR1138580} and \cite{MR1191566}).
\begin{theorem} \label{finiteness:and:volume:boundedness}
	Let $\cL$ be a class of $d$-dimensional integral polytopes in $\real^d$. Then the set $\cL/ \Aff(\integer^d)$ is finite if and only if $\setcond{\vol(L)}{L \in \cL}<+\infty$.
\end{theorem}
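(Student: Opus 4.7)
The plan is to treat the two directions separately. The ``only if'' direction is essentially trivial: every element of $\Aff(\integer^d)$ is the composition of a lattice-preserving linear map (of determinant $\pm 1$) with a translation, hence volume-preserving. Therefore the map $L \mapsto \vol(L)$ factors through the quotient $\cL/\Aff(\integer^d)$, and finiteness of this quotient implies that $\setcond{\vol(L)}{L \in \cL}$ is a finite set of real numbers, in particular bounded.

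For the ``if'' direction, I would assume $\sup \setcond{\vol(L)}{L \in \cL} \le V$ for some $V < +\infty$ and first reduce to a fixed volume. Every $d$-dimensional integral polytope $L$ can be triangulated into finitely many $d$-dimensional integral simplices; each such simplex has volume equal to $|\det M|/d!$ for an integer matrix $M$ formed by its edge vectors, hence a positive integer multiple of $1/d!$. Consequently $\vol(L)$ itself is a positive integer multiple of $1/d!$, so the set $\setcond{\vol(L)}{L \in \cL}$ is contained in the finite set $\setcond{k/d!}{k \in \natur, \ k \le V \cdot d!}$. It therefore suffices to show that, for each fixed positive $v$, the subclass $\cL_v$ of $d$-dimensional integral polytopes of volume $v$ in $\real^d$ has only finitely many $\Aff(\integer^d)$-equivalence classes.

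To establish this finiteness, I would invoke the classical boundedness theorem for lattice polytopes (due to Hensley and sharpened by Lagarias and Ziegler in \cite{MR1138580,MR1191566}): every $d$-dimensional integral polytope $L$ is $\Aff(\integer^d)$-equivalent to a polytope contained in a cube $[-C,C]^d$, where $C$ depends only on $d$ and on $\vol(L)$. Granted this, every equivalence class in $\cL_v/\Aff(\integer^d)$ has a representative contained in the fixed cube $Q:=[-C(d,v),C(d,v)]^d$. Since any integral polytope inside $Q$ is the convex hull of a subset of the finite set $\integer^d \cap Q$, there are only finitely many such polytopes, and hence only finitely many equivalence classes in $\cL_v/\Aff(\integer^d)$.

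The main obstacle in this plan is clearly the boundedness step: producing a cube (of size depending only on $d$ and the volume) into which some unimodular affine image of $L$ fits. This is a nontrivial result in the geometry of numbers, typically proved by an iterative ``flattening'' procedure based on Khinchine's flatness theorem combined with successive lattice projections. For the purposes of the present manuscript it may be invoked as a black box from the cited literature, while the volume-quantization and finite-counting arguments above are elementary.
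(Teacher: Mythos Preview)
The paper does not actually prove this theorem; it is stated as a known result with references to \cite[Theorem~2]{MR1138580} and \cite{MR1191566}. Your outline is correct and ultimately invokes the same cited literature for the one nontrivial step (the bounded-box statement), so there is no independent argument in the paper to compare against.

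One small quibble: the attribution to Hensley is slightly misplaced, since Hensley's theorem assumes an interior lattice point, whereas here you need the bounded-box statement for arbitrary $d$-dimensional integral polytopes (which may well have empty interior in the lattice). That general version is what Lagarias--Ziegler \cite{MR1138580} and B\'ar\'any--Vershik \cite{MR1191566} supply. It is also more naturally proved via a maximum-volume inscribed lattice simplex together with Hermite normal form than via the flatness theorem you mention, which governs lattice-\emph{free} bodies rather than integral polytopes.
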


A subset $K$ of $\real^d$ is said to be \term{lattice-free} if $K$ is $d$-dimensional closed convex and $\intr(K) \cap \integer^d \ne \emptyset$. A lattice-free set $K$ is said to be \term{maximal lattice-free} if $K$ is not properly contained in another lattice-free set. Proposition~\ref{max-M-free-prop} below shows that maximal $\bM$-free sets (which were defined in the introduction) can be described in terms of maximal lattice-free sets.

\begin{proposition} \header{Description of maximal $\bM$-free sets}. \label{max-M-free-prop}
	Let $d \in \natur$. Let $\bM$ be a mixed-integer space given by \eqref{MI:space}. Then a set $P$ in $\real^d$ is maximal $\bM$-free set if and only if $P=P' \times \real^n$, where $P'$ is a maximal lattice-free set in $\real^m$.
\end{proposition}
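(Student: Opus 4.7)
The proof hinges on the product structure. Call a set \emph{of product form} if it equals $P' \times \real^n$ for some closed convex $P' \subseteq \real^m$; for such a set $\intr(P' \times \real^n) = \intr(P') \times \real^n$, so (recalling $\bM = \integer^m \times \real^n$) the product is $d$-dimensional and $\bM$-free exactly when $P'$ is $m$-dimensional and $\intr(P') \cap \integer^m = \emptyset$, i.e.\ $P'$ is lattice-free in $\real^m$. Moreover, inclusion of product sets is equivalent to inclusion of their first factors, so the maximal $\bM$-free sets among product sets are precisely the sets $P' \times \real^n$ with $P'$ maximal lattice-free.

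The crux is the following key lemma: every $d$-dimensional closed convex $\bM$-free set $Q$ is contained in an $\bM$-free set of product form. To prove it, let $\pi \colon \real^d \to \real^m$ denote the projection onto the first $m$ coordinates and set $\overline Q := \cl(\pi(Q)) \times \real^n$. Clearly $Q \subseteq \overline Q$ and $\overline Q$ is of product form. Since $Q$ is $d$-dimensional, $\pi(Q)$ has nonempty interior in $\real^m$, so $\intr(\cl(\pi(Q))) = \intr(\pi(Q))$ and therefore $\intr(\overline Q) = \intr(\pi(Q)) \times \real^n$. The technical ingredient is the convex-geometric identity
\[
	\intr(\pi(Q)) = \pi\bigl(\intr(Q)\bigr),
\]
valid for every convex $Q$ with nonempty interior under a linear surjection $\pi$; given $z_0 \in \intr(\pi(Q))$ and any $x_0 \in \intr(Q)$, one prolongs the segment from $\pi(x_0)$ to $z_0$ slightly past $z_0$ to a point $z_\varepsilon \in \intr(\pi(Q)) \subseteq \pi(Q)$, lifts $z_\varepsilon$ to some $x_\varepsilon \in Q$, and checks that the appropriate convex combination of $x_\varepsilon$ and $x_0$ lies in $\intr(Q)$ and projects to $z_0$. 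Applying this identity, any $(z_0, y_0) \in \intr(\overline Q)$ with $z_0 \in \integer^m$ would yield some $y_1$ with $(z_0, y_1) \in \intr(Q) \cap \bM$, contradicting the $\bM$-freeness of $Q$.

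Both implications then follow quickly. For the forward direction, if $P$ is maximal $\bM$-free then the lemma gives $P \subseteq \overline P = \cl(\pi(P)) \times \real^n$ with $\overline P$ being $\bM$-free, so maximality of $P$ forces $P = P' \times \real^n$ for $P' := \cl(\pi(P))$ lattice-free by the opening observation; any strict lattice-free enlargement of $P'$ would contradict the maximality of $P$, so $P'$ is maximal lattice-free. For the reverse direction, $P = P' \times \real^n$ with $P'$ maximal lattice-free is $\bM$-free by the opening observation, and any strict $\bM$-free superset $Q \supsetneq P$ would, by the lemma applied to $Q$, sit inside a product $\bM$-free set $\overline Q = R \times \real^n$ with $R := \cl(\pi(Q)) \supseteq \pi(P) = P'$ lattice-free; maximality of $P'$ then forces $R = P'$, hence $Q \subseteq \overline Q = P' \times \real^n = P$, a contradiction.

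The main obstacle is establishing the convex-geometric identity $\intr(\pi(Q)) = \pi(\intr(Q))$ (and, together with it, the identity $\intr(\cl(\pi(Q))) = \intr(\pi(Q))$) for convex sets with nonempty interior under a linear surjection; once those are in place, the rest of the argument is routine product bookkeeping and direct appeal to the definitions of $\bM$-free and lattice-free.
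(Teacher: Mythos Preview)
Your argument is correct. The paper itself omits the proof entirely, stating only that it ``is straightforward and is therefore omitted,'' so there is no approach to compare against; your key lemma (that any $\bM$-free set embeds into a product-form $\bM$-free set via $Q \mapsto \cl(\pi(Q)) \times \real^n$) together with the identity $\pi(\intr(Q)) = \intr(\pi(Q))$ is exactly the natural way to make the ``straightforward'' verification explicit.
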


The proof of Proposition~\ref{max-M-free-prop} is straightforward and is therefore omitted. The following proposition presents well-known properties of maximal lattice-free sets (see \cite[Propositions~3.1 and 3.3]{MR1114315}).

\begin{proposition} \header{Description of maximal lattice-free sets}. \label{max-lat-free-prop} Let $K$ be a lattice-free set in $\real^d$. Then the following statements hold.
	\begin{enumerate}[I.] 
		\item \label{max-lat-free-description} The set $K$ is maximal lattice-free if and only if $K$ is a polyhedron and the relative interior of each facet of $K$ contains a point of $\integer^d$.
		\item \label{unbounded-max-lat-free} If $K$ is maximal lattice-free and unbounded, then $K$ is $\integer^d$-equivalent to $\real^n \times K'$, where $n \in \{1,\ldots,d-1\}$ and $K'$ is a maximal lattice free set in $\real^{d-n}$.
	\end{enumerate}
\end{proposition}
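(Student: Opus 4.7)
The plan is to prove the two parts separately. The ``if'' direction of Part~I is a short argument by contradiction, while the ``only if'' direction requires first establishing polyhedrality and then the facet condition; Part~II follows from Part~I via a rationality-plus-decomposition argument.

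For the ``if'' direction of Part~I, assume $K$ is a polyhedron with an integer point $z_F \in \relint(F) \cap \integer^d$ in every facet $F$, and suppose for contradiction that $K \subsetneq L$ for some lattice-free $L$. Pick $p \in L \setminus K$ and a facet $F$ of $K$ whose defining hyperplane $H$ strictly separates $p$ from $K$. Since $K$ is $d$-dimensional and $F$ is a facet, some $q \in K$ lies strictly on the $K$-side of $H$. Then the convex hull of $p$, $q$, and a small $(d-1)$-dimensional neighborhood of $z_F$ within $F$ contains a full-dimensional open neighborhood of $z_F$ and is itself contained in $\conv(K \cup \{p\}) \subseteq L$. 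Hence $z_F \in \intr(L) \cap \integer^d$, contradicting lattice-freeness of $L$.

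For the ``only if'' direction of Part~I, assume $K$ is maximal lattice-free. The hard step is polyhedrality. The idea is to exploit maximality as follows: for any $p \notin K$, the convex set $\conv(K \cup \{p\})$ strictly contains $K$, hence is not lattice-free, so contains an integer point in its interior. A minimality/limit argument on such witnessing integer points (letting $p$ approach $\bd(K)$) then extracts an integer point $z \in \bd(K)$ together with a supporting hyperplane $H$ of $K$ at $z$ that strictly separates $p$ from $K$ and satisfies $z \in \relint(K \cap H)$. Call such a supporting hyperplane \emph{good}; then $K$ equals the intersection of the good half-spaces. Finiteness of this intersection follows by grouping good hyperplanes by outer normal direction modulo $\lineal(K)$: each good normal is rational (forced by the integer witness in the relative interior of the corresponding face together with a standard lattice argument), and a compactness-plus-discreteness argument then yields only finitely many parallelism classes. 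Once $K$ is known to be a polyhedron, the facet condition is forced: if some facet $F$ contained no integer point in $\relint(F)$, a compactness argument modulo $\lineal(K)$ together with the positive distance from $F$ to the nearest integer point off $F$ would let the facet-defining hyperplane be pushed slightly outward, yielding a strictly larger lattice-free polyhedron and contradicting maximality.

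For Part~II, assume $K$ is maximal lattice-free and unbounded. From Part~I, $K$ is a rational polyhedron (integer facet-normals are provided by integer witnesses in each facet), so $\lineal(K)$ is a rational linear subspace. One must have $\lineal(K) \ne \{o\}$: otherwise $K$ is line-free, unbounded, and polyhedral, hence has an extreme ray, and a slight rotation of a facet-defining hyperplane about this ray would enlarge $K$ without admitting integer points into its interior, contradicting maximality. Setting $n = \dim \lineal(K) \in \{1, \ldots, d-1\}$ and choosing $A \in \Aff(\integer^d)$ that sends $\lineal(K)$ onto $\real^n \times \{o\}$ (using rationality of $\lineal(K)$), one has $A(K) = \real^n \times K'$ for some closed convex $K' \subseteq \real^{d-n}$. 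Lattice-freeness of $K'$ is immediate, and maximality transfers: any $L' \supsetneq K'$ lattice-free in $\real^{d-n}$ would give $\real^n \times L' \supsetneq A(K)$ lattice-free in $\real^d$. The main obstacle is the finiteness step within the polyhedrality argument in Part~I: when $K$ is unbounded, its boundary may contain infinitely many integer points, so naive compactness fails. The remedy is to establish rationality of good supporting normals by a local lattice argument and then quotient by $\lineal(K)$; any potential circularity (since $\lineal(K)$ is shown rational only after polyhedrality) is resolved by arguing the finiteness of parallelism classes of good normals independently of the full lineality structure.
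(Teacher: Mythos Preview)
The paper does not give its own proof of this proposition; it cites Lov\'asz \cite[Propositions~3.1 and 3.3]{MR1114315} and treats the result as known. So there is no in-paper argument to compare against, and one must judge your proposal on its own merits.

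Your ``if'' direction of Part~I is correct. The real gap is in the ``only if'' direction and in Part~II, where you repeatedly assert that a maximal lattice-free set is a \emph{rational} polyhedron (``each good normal is rational (forced by the integer witness in the relative interior \ldots)'', and later ``$K$ is a rational polyhedron (integer facet-normals are provided by integer witnesses in each facet)''). This is false: a single integer point in the relative interior of a facet does not force the facet hyperplane to be rational. In $\real^2$ there exist maximal lattice-free triangles with exactly one integer point in the relative interior of each edge and with irrational edge slopes; by your own ``if'' direction such a triangle is maximal lattice-free, yet it is not a rational polyhedron. The paper itself implicitly acknowledges this by imposing rationality as an extra hypothesis on $\cL$ in Corollary~\ref{polyhedrality:of:closures:cor} and Theorem~\ref{max-fac-width-of-max-lat-free}. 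With rationality gone, your finiteness argument for the family of ``good'' supporting hyperplanes collapses, and in Part~II you can no longer conclude that $\lineal(K)$ is a rational subspace---which is exactly what is needed to find $A \in \Aff(\integer^d)$ taking $\lineal(K)$ to a coordinate subspace.

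Two further issues in Part~II: the ``slight rotation of a facet-defining hyperplane about an extreme ray'' step is not an argument---tilting a facet hyperplane typically pushes its relative-interior integer witness straight into $\intr(K)$, so you have not contradicted maximality. And even granting $\lineal(K) \ne \{o\}$, you still need $\rec(K) = \lineal(K)$ (otherwise your quotient $K'$ is line-free, unbounded, and maximal lattice-free, and Part~II applied to $K'$ yields a contradiction); you do not address this. The standard route to polyhedrality goes through a lattice-width bound (via Minkowski's first theorem or Dirichlet approximation) rather than through rationality of normals, and the rationality of $\lineal(K)$ is obtained by a separate argument once one knows $K$ sits in a rational slab of bounded integer width.
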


In view of Propositions~\ref{max-M-free-prop} and \ref{max-lat-free-prop}, every maximal $\bM$-free sets is also maximal lattice-free.

\subsection{Gordan-Dickson lemma}

We shall need the following useful fact (for far-reaching generalizations formulated in the framework of well-quasi orderings see \cite{MR0306057,MR818505,MR818506}).

\begin{lemma} \header{Gordan-Dickson Lemma}. \label{gordan:dickson}
	Let $X \subseteq \natur^d$. Then there exists a finite subset $X'$ of $X$ such that every $x \in X$ satisfies $x' \le x$ for some $x' \in X'$.
\end{lemma}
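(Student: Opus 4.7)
The plan is to prove the lemma by induction on the dimension $d$. The base case $d = 1$ is immediate: if $X = \emptyset$, take $X' = \emptyset$; otherwise, by the well-ordering of $\natur$, the set $X$ has a smallest element $x_0$ and we can set $X' = \{x_0\}$.

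For the inductive step, assume the statement for dimension $d-1$. Let $X \subseteq \natur^d$; we may assume $X \ne \emptyset$, for otherwise $X' := \emptyset$ works. Pick an arbitrary element $x^* = (x_1^*,\ldots,x_d^*) \in X$. The key observation is that any $x \in X$ failing $x^* \le x$ must have some coordinate strictly smaller than the corresponding coordinate of $x^*$, i.e.\ $x_i < x_i^*$ for some $i \in \{1,\ldots,d\}$. I would therefore partition the ``bad'' elements according to which coordinate $i$ and which value $k \in \{0,1,\ldots,x_i^*-1\}$ witnesses this strict inequality, by introducing the slices
\[
	X_{i,k} := \setcond{x \in X}{x_i = k}, \qquad i \in \{1,\ldots,d\}, \ k \in \{0,\ldots,x_i^* - 1\}.
\]

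Since every element of $X_{i,k}$ has the same fixed $i$-th coordinate $k$, the projection that forgets this coordinate identifies $X_{i,k}$ with a subset of $\natur^{d-1}$ in an order-preserving way (the componentwise order on the remaining coordinates coincides with the componentwise order on the original vectors, once the shared $k$ is removed). Applying the inductive hypothesis to each projection yields a finite subset $X'_{i,k} \subseteq X_{i,k}$ such that every element of $X_{i,k}$ dominates some element of $X'_{i,k}$. The candidate set is then
\[
	X' := \{x^*\} \cup \bigcup_{i=1}^{d} \bigcup_{k=0}^{x_i^* - 1} X'_{i,k},
\]
which is a finite union of finite sets and hence finite. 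For every $x \in X$, either $x^* \le x$ and we are done, or there exists some index $i$ with $x_i < x_i^*$, whence $x \in X_{i,x_i}$ and some element of $X'_{i,x_i} \subseteq X'$ lies below $x$ in the componentwise order.

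The argument is a standard inductive dimension reduction, so no single step is technically hard; the main care point is ensuring that the slicing along a fixed coordinate genuinely reduces the effective dimension to $d-1$ with the induced partial order unchanged, which is precisely why I fix $x_i = k$ rather than allowing a range. Everything else is bookkeeping.
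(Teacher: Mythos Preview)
Your proof is correct; the induction on $d$ with slicing along a fixed coordinate is a standard and clean argument for the Gordan--Dickson lemma. Note, however, that the paper does not actually prove this statement: it is presented as a well-known fact, with references to the literature on well-quasi-orderings for generalizations, so there is no proof in the paper to compare against. Your argument would serve perfectly well as a self-contained justification. One cosmetic remark: the paper's convention appears to be $\natur = \{1,2,\ldots\}$, so the slice index $k=0$ would always give an empty set $X_{i,0}$; this is harmless, but you could tidy the range to $k \in \{1,\ldots,x_i^*-1\}$ if you wish.
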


\section{Main results} \label{sect:results}

In view of Theorem~\ref{decomp:conv:sets} we are motivated to describe the extreme points and the recession cone of $R_L(K)$. This is done in Theorem~\ref{descr:R} under rather weak assumptions on $K$ and $L$. Parts~I and II of Theorem~\ref{descr:R} generalize \cite[Lemma~4.2]{MR2676765} and \cite[Lemmas~2.3 and 2.4]{MR2676765}, respectively.

\begin{theorem} \label{descr:R}  \header{Properties of $L$-reductions}.
	Let $K$ be a line-free closed convex set in $\real^d$ and let $L$ be a $d$-dimensional closed convex set in $\real^d$.  Let $R:=R_L(K)$ and assume $R \ne \emptyset$. Then the following statements hold.
	\begin{enumerate}[I.]
		\item \label{ext:R} For a point $x \in \real^d$ the following conditions are equivalent.
		\begin{enumerate}[(i)]
			\item One has $x \in \ext(R)$.
			\item Either $x \in \ext(K) \setminus \intr(L)$ or there exists a one-dimensional face $I$ of $K$ such that $I \setminus \{x\}$ consists of two connected components $I_1$ and $I_2$ which satisfy $I_1 \subseteq \intr(L)$ and $I_2 \cap L = \emptyset$.
		\end{enumerate}
		\item \label{rep:R} If $\rec(L)$ is a linear space, then $R$ is closed, the recession cones of $R$ and $K$ coincide and, furthermore, one has
		\begin{align}
			R & = \conv \bigl(\ext(R)\bigr) + \rec(K), \label{R:rep:precise} \\
			R & = \conv \bigl(\ext_1(K)\setminus \intr(L)\bigr) + \rec(K). \label{R:rep:less:precise}
		\end{align}
	\end{enumerate}
\end{theorem}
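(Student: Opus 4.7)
The plan is to prove Part~I first and then derive Part~II by combining Part~I with the decomposition theorem (Theorem~\ref{decomp:conv:sets}). The main tools are the inclusion \eqref{ext:conv:inclusion} applied to $R = \conv(K \setminus \intr(L))$, which yields $\ext(R) \subseteq K \setminus \intr(L)$, and the Cap Lemma (Lemma~\ref{cap:lemma}).

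For Part~I, direction $(ii) \Rightarrow (i)$ splits into two subcases. If $x \in \ext(K) \setminus \intr(L)$, then $x \in R \subseteq K$ and extremality in the larger set $K$ descends to $R$. If $x$ is the transition point on an edge $I$ with the prescribed structure, I would invoke the Cap Lemma: given a neighborhood $U$ of $x$, the required separating hyperplane can be obtained as a slight perturbation of a supporting hyperplane of $L$ at $x$, the perturbation exploiting that $I_2$ leaves $L$ entirely while $I_1$ enters $\intr(L)$. For the converse $(i) \Rightarrow (ii)$, let $F$ be the unique face of $K$ whose relative interior contains $x$. The case $\dim(F) = 0$ gives (ii)(a). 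For $\dim(F) \geq 1$, first observe $x \in \bd(L)$: if $x \notin L$, a neighborhood of $x$ within $F$ lies in $R$, contradicting extremality. Pick a supporting hyperplane $H^*$ of $L$ at $x$. Then $\dim(F) \geq 2$ is excluded: either $\aff(F) \subseteq H^*$, in which case $F \cap \intr(L) = \emptyset$ and $F \subseteq R$ contradicts extremality, or $\aff(F) \not\subseteq H^*$, in which case a nonzero $v$ in $(\aff(F) - x) \cap H_0^*$ (where $H_0^*$ is the parallel hyperplane through the origin) yields $x \pm \epsilon v \in F \cap H^* \subseteq R$ and a segment through $x$ in $R$. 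Hence $F = I$ is an edge. The direction vector of $I$ is transverse to $H^*$, so one side of $x$ on $I$ (call it $I_2$) lies in the open half-space of $H^*$ disjoint from $L$, and the other ($I_1$) in the open half-space containing $\intr(L)$; since $F \cap \intr(L) \ne \emptyset$, $I_1 \cap \intr(L) \ne \emptyset$. Finally, $I_1 \subseteq \intr(L)$ is forced by extremality: any $z \in I_1 \setminus \intr(L)$ together with a point $w \in I_2$ would yield $x \in \relint[w,z] \subseteq R$.

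For Part~II, I would proceed in four steps. \emph{Step 1: $\rec(K) \subseteq \rec(R)$.} Given $u \in \rec(K)$ and $y \in K \setminus \intr(L)$, consider the open convex set $S = \{s \geq 0 : y + su \in \intr(L)\} \subseteq [0,\infty)$. Because the hypothesis yields $\rec(\intr(L)) = \rec(L) = \lineal(L)$, either $u \in \lineal(L)$ (so translation by $u$ preserves $\intr(L)$ and $S = \emptyset$), or $u \notin \lineal(L)$, in which case $\sup S < \infty$; otherwise the tail of the ray would lie in $\intr(L)$, forcing $u \in \rec(\intr(L)) = \lineal(L)$. In the second case, $y + s_1 u$ and $y + s_2 u$ lie in $K \cap \bd(L) \subseteq K \setminus \intr(L)$ and any $y + tu$ with $t \in [s_1,s_2]$ is their convex combination; in both cases $y + tu \in R$ for all $t \geq 0$, and this extends to general $x \in R$ by convex combinations. \emph{Step 2: $R$ is closed.} Given $x_n \to x$ in $R$, apply Carath\'eodory to write $x_n = \sum_{i=0}^{d} \lambda_{n,i} y_{n,i}$ with $y_{n,i} \in K \setminus \intr(L)$ and pass to subsequences; line-freeness of $K$ (so $\rec(K)$ is pointed) rules out unbounded cancellations and ensures that indices with $\lambda_i = 0$ contribute recession directions of $K$ in the limit, while indices with $\lambda_i > 0$ contribute points of the closed set $K \setminus \intr(L)$. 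Thus $x$ is a point of $R$ translated by an element of $\rec(K) \subseteq \rec(R)$ (by Step~1), hence in $R$. \emph{Step 3: $\rec(R) \subseteq \rec(K)$.} Since $R$ is now closed and $R \subseteq K$, the one-point characterization of recession cones for closed convex sets gives the inclusion. \emph{Step 4.} Theorem~\ref{decomp:conv:sets} applied to the closed line-free convex set $R$ yields $R = \conv(\ext(R)) + \rec(R) = \conv(\ext(R)) + \rec(K)$, i.e., \eqref{R:rep:precise}. By Part~I, $\ext(R) \subseteq \ext_1(K) \setminus \intr(L)$, and the reverse inclusion $\conv(\ext_1(K) \setminus \intr(L)) + \rec(K) \subseteq R$ is immediate from $\ext_1(K) \setminus \intr(L) \subseteq R$ together with $\rec(K) \subseteq \rec(R)$, yielding \eqref{R:rep:less:precise}.

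The main obstacle I anticipate is the closedness of $R$ in Step~2. Convex hulls of closed non-convex sets are not closed in general, and the linearity of $\rec(L)$ is the precise hypothesis that controls recession behaviour: it is used in Step~1 to enforce $\rec(K) \subseteq \rec(R)$, which is in turn what makes the Carath\'eodory limit procedure in Step~2 close up inside $R$. Verifying that pointedness of $\rec(K)$ prevents unbounded terms in the Carath\'eodory decomposition from cancelling to a bounded sum is the technical crux on which the rest of Part~II depends.
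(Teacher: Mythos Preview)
Your route through (i) $\Rightarrow$ (ii) in Part~I (take the face $F$ with $x \in \relint(F)$, pick a supporting hyperplane $H^*$ of $L$ at $x$, eliminate $\dim F \ge 2$ by the dimension count on $(\aff(F)-x) \cap H_0^*$, then force $I_1 \subseteq \intr(L)$ by extremality) is essentially the paper's argument and is fine.

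The gap is in (ii) $\Rightarrow$ (i), the edge subcase. You propose to invoke the Cap Lemma by perturbing a supporting hyperplane $H^*$ of $L$ at $x$, but $H^*$ controls only $L$, not $K$: points of $K \setminus \intr(L)$ coming from \emph{other} faces of $K$ can lie on the $L$-side of $H^*$, and these belong to $R \setminus U$ for small $U$. No small perturbation of $H^*$ will place such points and $x$ in opposite open half-spaces. (There is also the formal issue that the Cap Lemma is stated for closed sets, while in Part~I you do not yet know that $R$ is closed; only the trivial direction of the lemma survives without closedness.) The paper argues this implication quite differently, by contradiction through the face property: if $x \in \relint([p_1,p_2])$ with $p_1,p_2 \in R$, write each $p_i \in \relint(\conv(X_i))$ with $X_i \subseteq K \setminus \intr(L)$ affinely independent (Theorem~\ref{strong:caratheodory}), extract an affinely independent $Y \subseteq X_1 \cup X_2$ with $x \in \relint(\conv(Y))$, and use that $I$ is a face of $K$ with $x \in \relint(I)$ to force $\conv(Y) \subseteq I$. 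Then $Y$ consists of two points, one on each side of $x$ along $I$, and the one in $I_1$ lies in $\intr(L)$, contradicting $Y \subseteq K \setminus \intr(L)$. This is a local argument inside $I$ and needs no global separating hyperplane.

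For Part~II your scheme is correct but organised differently from the paper's. Your Step~1 (the ray argument: for $y \in K \setminus \intr(L)$ and $u \in \rec(K)$, the set $\{s \ge 0 : y+su \in \intr(L)\}$ is bounded because $\rec(L)$ is linear, hence $y+tu \in R$ for all $t \ge 0$) is exactly the paper's \eqref{ray:in:R}. For closedness, however, the paper does not run a Carath\'eodory limit. Instead it first proves, via the Cap Lemma applied to the \emph{closed} set $\bar R := \cl(R)$, that $\ext(\bar R) \subseteq K \setminus \intr(L) \subseteq R$; then it writes $\bar R = \conv\bigl(\ext(\bar R) \cup \extr(\bar R)\bigr)$ and checks that each extreme ray of $\bar R$ also lies in $R$ (its endpoint is in $\ext(\bar R) \subseteq R$, and its direction is in $\rec(\bar R) = \rec(K)$, so the ray is absorbed by Step~1). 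Your sequential argument---pass to subsequences, use pointedness of $\rec(K)$ to rule out unbounded cancelling terms, and absorb the limiting recession contribution via Step~1---is a legitimate alternative and does go through; the paper's route trades the subsequence bookkeeping for the auxiliary inclusion $\ext(\bar R) \subseteq \ext(R)$, which is where the Cap Lemma is actually used in the paper.
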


Part~\ref{ext:R} of Theorem~\ref{descr:R} can be illustrated by Fig.~\ref{R_L:illustration} from the introduction. In Fig.~\ref{R_L:illustration} the set $R:=R_L(P)$ is a $7$-gon. It can be seen that the seven vertices of $R_L(P)$ are the only points that satisfy condition (ii) from Part~\ref{ext:R}.

It turns out that, for a general $L$, the functional $R_L$ does not always map polyhedra to polyhedra. The reason of this is that $R_L(K)$ is not always a closed set when $K$ is closed. See Fig.~\ref{fig-two-orhtants} for an example. Our next result characterizes the sets $L$ for which $R_L$ maps polyhedra to polyhedra.
\begin{figtabular}{c}
\unitlength=1mm
\begin{picture}(80,30)
	\put(6,-3){\includegraphics[width=70\unitlength]{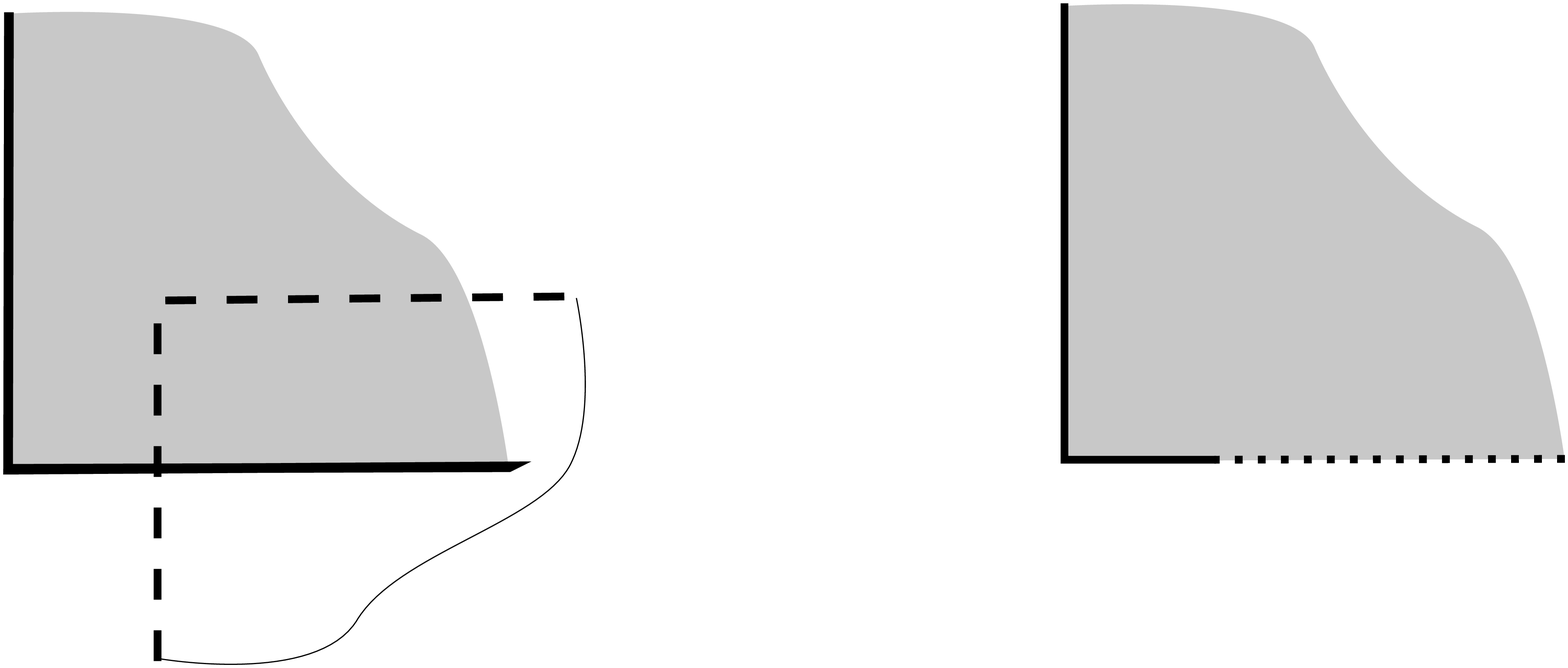}}
	\put(1,20){$K$}
	\put(30,0){$L$}
	\put(40,20){$R_L(K)$}
\end{picture}
\\
\parbox[t]{0.90\textwidth}{\caption{\label{fig-two-orhtants} An example of closed convex sets $K$ and $L$ for which $R_L(K)$ is not closed (for the case $d=2$). Both $K$ and $L$ are translates of orthants. The set $K$ is shaded, the boundary of $L$ is dashed. The dotted line (on the right) is the part of the boundary of $R_L(K)$  which is not contained in $R_L(K)$.}}
\end{figtabular}

\begin{theorem} \label{polyhedrality:characterization} \header{Characterization of $L$-reductions preserving polyhedrality}.
	Let $L$ be a $d$-dimensional closed convex set in $\real^d$. Then the following conditions are equivalent.
	\begin{enumerate}[(i)]
		\item \label{polyh->polyh} $R_L$ maps every polyhedron in $\real^d$ to a polyhedron.
		\item \label{closed->closed} $R_L$ maps every closed convex set in $\real^d$  to a closed convex set.
		\item \label{rec:lin:or:halfspace} Either $L$ is a halfspace or $\rec(L)$ is a linear space.
	\end{enumerate}
\end{theorem}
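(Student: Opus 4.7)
The plan is to prove the equivalence in the form $(iii) \Rightarrow (i)$, $(iii) \Rightarrow (ii)$, and $\neg(iii) \Rightarrow \neg(ii)$; the last of these automatically yields $\neg(iii) \Rightarrow \neg(i)$, since a non-closed set cannot be a polyhedron.

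For $(iii) \Rightarrow (i),(ii)$ I would split into two cases. If $L$ is a halfspace, then $\real^d \setminus \intr(L)$ is itself a closed halfspace, so $K \setminus \intr(L) = K \cap (\real^d \setminus \intr(L))$ is already convex, equals $R_L(K)$, and is closed (respectively polyhedral) whenever $K$ is. If instead $\rec(L)$ is a linear space, so $\rec(L) = \lineal(L)$, I reduce to line-free $K$ by splitting according to $V := \lineal(K)$. When $V \not\subseteq \rec(L)$, pick $v \in V \setminus \rec(L)$; since $\rec(L)$ is linear, also $-v \notin \rec(L)$, so every line $x + \real v \subseteq K$ meets $L$ in a bounded segment and its two recession rays lie in $K \setminus \intr(L)$, whose convex hull is all of $x + \real v$; this forces $R_L(K) = K$. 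When $V \subseteq \rec(L) = \lineal(L)$, both $K$ and $L$ are invariant under translations by $V$; setting $K_0 := K \cap V^\perp$ and $L_0 := L \cap V^\perp$, one checks $\intr(L) = \intr_{V^\perp}(L_0) + V$ and derives the identity $R_L(K) = R_{L_0}(K_0) + V$. Since $K_0$ is line-free and $\rec(L_0)$ is linear in $V^\perp$, Theorem~\ref{descr:R}, Part~\ref{rep:R}, applied in $V^\perp$ yields $R_{L_0}(K_0) = \conv(\ext_1(K_0) \setminus \intr(L_0)) + \rec(K_0)$, which is closed. If $K$ is polyhedral then so is $K_0$; its one-skeleton $\ext_1(K_0)$ is a finite union of segments and rays, subtracting the open convex set $\intr(L_0)$ cuts each edge into at most two convex pieces (still segments or rays), and the convex hull of such a finite collection, plus the polyhedral cone $\rec(K_0)$, is polyhedral. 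Taking the cylindrical sum with $V$ preserves both closedness and polyhedrality.

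For $\neg(iii) \Rightarrow \neg(ii)$, assume $L$ is neither a halfspace nor has a linear-space recession cone. Fix $v \in \rec(L) \setminus \lineal(L)$ and a point $p \in \intr(L)$. I claim there exists a 2-plane $H \ni p$ parallel to $v$ such that $L' := L \cap H$ is not a halfspace inside $H$: otherwise every such 2-plane slice of $L$ would be a halfspace, and an iterative pencil-of-planes argument over directions transverse to $v$ would force $L$ itself to be a halfspace. The section $L'$ satisfies $v \in \rec(L') \setminus \lineal(L')$ and is not a halfspace in $H$. Working in $H \cong \real^2$, I take a polyhedron $K$ as in Figure~\ref{fig-two-orhtants}: a translate of a 2-dimensional orthant positioned so that $K \setminus \intr(L')$ is an ``$L$-shaped'' set whose concave corner points toward the recession direction $v$. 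The convex hull fills most of $K$ but omits the relatively open segment on the edge of $K$ opposite the concave corner: this segment only appears as a limit of convex-combination segments whose one endpoint escapes to infinity in direction $v$. Thus $R_{L'}(K)$ is not closed in $H$. Since $p \in H \cap \intr(L)$ and $L'$ is full-dimensional in $H$, one has $\intr(L) \cap H = \intr_H(L')$, whence $R_L(K) = R_{L'}(K)$; hence $R_L(K)$ is not closed in $\real^d$ even though $K$ is a polyhedron, refuting both $(ii)$ and $(i)$.

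The main obstacle I anticipate is the pencil-of-planes existence claim, namely that for $L$ not a halfspace some 2-plane through $p$ parallel to $v$ slices $L$ in a non-halfspace; a clean proof probably proceeds by contrapositive, using that if all such slices are halfspaces then $\bd(L)$ contains, at each of its points, enough independent linear directions to make $L$ itself a halfspace. A secondary technical point is the cylinder identity $R_L(K) = R_{L_0}(K_0) + V$ in the reduction to line-free $K$, which rests on the $V$-invariance of both $K$ and $L$ and the compatibility of interior with the cylindrical decomposition.
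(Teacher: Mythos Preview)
Your forward direction $(iii) \Rightarrow (i),(ii)$ matches the paper's: trivialize the halfspace case, reduce to line-free $K$ via the lineality space (the paper packages this as Lemmas~\ref{no:reduction:case} and~\ref{projection:lemma}), and invoke Theorem~\ref{descr:R}.\ref{rep:R}. Your polyhedrality argument via \eqref{R:rep:less:precise} is a minor variant of the paper's, which instead counts extreme points using Theorem~\ref{descr:R}.\ref{ext:R}; both are fine.

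For $\neg(iii) \Rightarrow \neg(i),(ii)$ your route genuinely differs. You slice $L$ by a $2$-plane $H$ parallel to $v$ so that $L' = L \cap H$ still violates (iii), and then build the counterexample inside $H$. The paper does \emph{not} slice $L$: it selects $u_1,u_2 \in \real^d$ directly --- with $u_1 \in \rec(L)$, $-u_1 \notin \rec(L)$, and $u_2$ chosen so that $tu_1 - \varepsilon u_2 \notin \rec(L)$ for every $\varepsilon>0$, via a short case split on whether the line-free part $C$ of $\rec(L)$ is one-dimensional or at least two-dimensional --- then sets $K := p - 2tu_1 + \cone(\{u_1,-u_2\})$ as a $2$-dimensional cone sitting inside $\real^d$, and checks $p \in \cl\bigl(R_L(K)\bigr) \setminus R_L(K)$ using only the recession behaviour of $L$.

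Your slicing approach is viable, but both gaps you flag are real. The pencil-of-planes claim is true; one clean argument: if every $2$-plane through $p$ parallel to $v$ meets $L$ in a halfspace, then $\bd(L)$ is a union of lines all passing through the unique point $q$ where the ray $\{p - tv : t \ge 0\}$ exits $L$; any supporting hyperplane $G$ at $q$ must then contain each such line, so $\bd(L) \subseteq G$, and connectivity of $\intr(G^+)$ forces $L = G^+$. More problematic is your 2D endgame. Pointing to Figure~\ref{fig-two-orhtants} is not enough, because there $L$ itself is an orthant; your $L' = L \cap H$ need not be (it could be a parabolic region, say), so $K \setminus \intr(L')$ is not literally ``L-shaped'' and you still owe the full non-closedness verification. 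That verification is exactly the paper's construction and argument specialized to $d=2$. So the slicing detour adds a lemma without shortening the core construction; the paper's direct choice of $u_1,u_2$ in $\real^d$ is the shorter path.
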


In Theorem~\ref{polyhedrality:of:closures} below we present a condition on $\cL$ under which $R_\cL$ maps rational polyhedra to rational polyhedra. Theorem~\ref{polyhedrality:of:closures} is a generalization of \cite[Theorem~4.2]{MR2676765}.

\newcommand{\cR}{\mathcal{R}}

\begin{theorem} \label{polyhedrality:of:closures} \header{Condition sufficient for finite generation of an $\cL$-closure}.
	Let $P$ be a rational polyhedron in $\real^d$.  Let $\cL$ be a class of $d$-dimensional rational polyhedra in $\real^d$ such that the following conditions are fulfilled.
	\begin{enumerate}[(a)]
		\item	There exists $k \in \natur$ such that $\maxfw(L) \le k$ for every $L \in \cL$.
		\item There exists $\ell \in \natur$ such that for every $L \in \cL$ and every facet $F$ of $L$ one has $ \bigl(\ell \aff(F)\bigr) \cap \integer^d \ne \emptyset$.
	\end{enumerate}
	Then the following statements hold.
	\begin{enumerate}[I.]
		\item There exists a finite subclass $\cL'$ of $\cL$ such that every $L \in \cL$ satisfies  $R_{L'}(P) \subseteq R_L(P)$ for some $L'\in \cL'$.
		\item One has $R_\cL(P) = R_{\cL'}(P)$ for some finite subclass $\cL'$ of $\cL$.
		\item The set $R_\cL(P)$ is a rational polyhedron.
	\end{enumerate}
\end{theorem}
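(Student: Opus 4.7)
I would deduce Statements~II and~III from~I by short arguments, saving the real work for~I. For~II, the inclusion $R_\cL(P)\subseteq R_{\cL'}(P)$ is immediate from $\cL'\subseteq\cL$; conversely, for each $L\in\cL$ pick $L'\in\cL'$ with $R_{L'}(P)\subseteq R_L(P)$ from~I, so that $R_{\cL'}(P)\subseteq R_{L'}(P)\subseteq R_L(P)$, and intersecting over $L\in\cL$ yields $R_{\cL'}(P)\subseteq R_\cL(P)$. For~III, by~II, $R_\cL(P)=\bigcap_{L'\in\cL'}R_{L'}(P)$ is a finite intersection; each $R_{L'}(P)$ is a rational polyhedron since condition~(a) forces $\rec(L')$ to be a linear space (so Theorem~\ref{polyhedrality:characterization} gives polyhedrality) and rationality is inherited via a standard disjunctive-programming argument applied to the decomposition $P\setminus\intr(L')=\bigcup_{F}(P\cap H_F^{-})$ into a finite union of rational polyhedra.

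The plan for~I is a Gordan--Dickson argument carried out after stratifying $\cL$ by $\lineal(L)$. Using $R_{\ell L}(\ell P)=\ell R_L(P)$, I would scale by $\ell$ to assume $\ell=1$, so every facet hyperplane of every $L\in\cL$ has the form $\{x:\langle a,x\rangle=b\}$ with $a\in\integer^d$ primitive and $b\in\integer$; any $L$ with $L\cap P=\emptyset$ satisfies $R_L(P)=P$ and may be dropped. I claim that only finitely many rational linear subspaces occur as $\lineal(L)=\rec(L)$ among the remaining $L\in\cL$. Fixing such a subspace $\bL$ and writing $\cL_\bL$ for the sub-family with $\lineal(L)=\bL$, every $L\in\cL_\bL$ has all its facet normals in $\bL^{\perp}\cap\integer^d$; conditions~(a),~(b), together with $L\cap P\ne\emptyset$, should yield a finite set $\mathcal{U}_\bL\subseteq\bL^{\perp}\cap\integer^d$ of primitive vectors containing every facet normal of every $L\in\cL_\bL$, and, for each $u\in\mathcal{U}_\bL$, a uniform integer upper bound $M_u$ on $h(L,u)$ over $L\in\cL_\bL$. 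The encoding
\[
\phi_L(u):=M_u-h(L,u)\in\natur\qquad(u\in\mathcal{U}_\bL)
\]
then satisfies: $\phi_{L'}\le\phi_L$ componentwise forces $h(L,u)\le h(L',u)$ for every $u\in\mathcal{U}_\bL\supseteq U(L')$, which by the defining representation of $L'$ gives $L\subseteq L'$ and hence $R_{L'}(P)\subseteq R_L(P)$ by monotonicity. Applying Lemma~\ref{gordan:dickson} to $\{\phi_L:L\in\cL_\bL\}\subseteq\natur^{|\mathcal{U}_\bL|}$ yields a finite subfamily $\cL'_\bL\subseteq\cL_\bL$ with the required property, and $\cL':=\bigcup_\bL\cL'_\bL$ is the desired finite subclass.

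The main obstacle is to justify the three finiteness facts invoked above: (i)~only finitely many rational lineality subspaces $\bL$ arise in $\cL$; (ii)~for each such $\bL$, the facet normals of $L\in\cL_\bL$ lie in a finite set $\mathcal{U}_\bL$; and (iii)~the support values $h(L,u)$ for $u\in\mathcal{U}_\bL$ are uniformly bounded above on $\cL_\bL$. Condition~(a) bounds the width of $L$ along each of its facet normals by $k$, condition~(b) places the facet offsets on a fixed discrete lattice after scaling, and $L\cap P\ne\emptyset$ ties $L$ to the fixed polyhedron $P$; combining these with Theorem~\ref{finiteness:and:volume:boundedness} applied to the ``polytope part'' of $L$ in $\bL^{\perp}$ should yield all three statements and complete the proof of~I.
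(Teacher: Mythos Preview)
Your reductions for Parts~II and~III are fine. The gap is in Part~I: the finiteness claims (i) and (ii) that you defer to the end are not consequences of (a), (b) and $L\cap P\ne\emptyset$; in fact they are false. For (i), the splits $L_n=\setcond{x\in\real^2}{0\le nx_1-x_2\le 1}$ satisfy (a) and (b) with $k=\ell=1$ and all meet $P=[0,1]^2$, yet $\lineal(L_n)=\lin\{(1,n)\}$ are pairwise distinct. For (ii), within the single stratum $\bL=\{o\}$ the parallelograms $L_n=\setcond{x\in\real^2}{0\le x_2\le 1,\ 0\le x_1-nx_2\le 1}$ have $\maxfw(L_n)=1$ and integer vertices, all meet $[0,1]^2$, but carry the unbounded facet normals $(1,-n)$; suitable integer translates even put a fixed rational vertex of $P$ into $\intr(L_n)$ for infinitely many $n$, so the restriction $R_L(P)\ne P$ does not help either. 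Consequently $\phi_L$ cannot be placed in a fixed $\natur^N$, and even where the index set is finite, $h(L,u)$ for $u\notin U(L)$ need not be an integer.

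The structural reason your encoding cannot work is that from $\phi_{L'}\le\phi_L$ you deduce $L\subseteq L'$, so your $\cL'$ would have the property that every $L\in\cL$ \emph{contains} some $L'\in\cL'$; for any infinite family of splits with distinct directions this is impossible, irrespective of $P$. Part~I asks only for $R_{L'}(P)\subseteq R_L(P)$, and the paper's proof exploits this by encoding not $L$ but its effect on the $1$-skeleton of $P$: for each vertex--edge incidence $(v,e)$ of $P$ one records a single scalar $r_L(v,e)$ (essentially $\|u(v,e)\|_{L-v}$) measuring how far $\intr(L)$ reaches into $e$ from $v$. Conditions (a) and (b), together with a common denominator $m$ for the vertices of $P$, force $(k\ell m)!\cdot r_L(v,e)\in\natur\cup\{+\infty\}$, so Gordan--Dickson applies to a matrix indexed by the combinatorics of $P$, which is fixed, rather than of $L$; the needed monotonicity $\Rem_{L'}(P)\le\Rem_L(P)\Rightarrow R_{L'}(P)\subseteq R_L(P)$ then follows from the edge-by-edge description of $R_L(P)$ furnished by Theorem~\ref{descr:R}.
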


Part~I is the main assertion of Theorem~\ref{polyhedrality:of:closures}. Part~I resembles the structure of the Gordan-Dickson lemma. In fact, we use the Gordan-Dickson lemma in its proof. As a consequence of Theorem~\ref{polyhedrality:of:closures} we obtain the following result from \cite[Theorem~4.2]{MR2676765}.

\begin{corollary} \label{polyhedrality:of:closures:cor}
	Let $P$ be a rational polyhedron in $\real^d$. Let $\cL$ be a class of $d$-dimensional rational polyhedra in $\real^d$ such that the following conditions are fulfilled.
	\begin{enumerate}[(a)]
		\item There exists $k \in \natur$ such that for every $L \in \cL$ one has $\maxfw(L) \le k$.
		\item Every $L \in \cL$ is a maximal lattice-free set.
	\end{enumerate}
	Then Parts~I-III of Theorem~\ref{polyhedrality:of:closures} are fulfilled.
\end{corollary}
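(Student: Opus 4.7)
The plan is to deduce this corollary directly from Theorem~\ref{polyhedrality:of:closures} by verifying the two hypotheses (a) and (b) of the theorem under the assumptions of the corollary. Hypothesis (a) is already identical in both statements, so no work is needed there. The entire content of the proof is therefore to check that hypothesis (b) of the theorem --- existence of $\ell \in \natur$ such that $\bigl(\ell\aff(F)\bigr) \cap \integer^d \ne \emptyset$ for every facet $F$ of every $L \in \cL$ --- follows from hypothesis (b) of the corollary, namely that every $L \in \cL$ is maximal lattice-free.

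For this verification I intend to apply Part~\ref{max-lat-free-description} of Proposition~\ref{max-lat-free-prop}. Since every $L \in \cL$ is maximal lattice-free, that proposition tells us that for each facet $F$ of $L$ the relative interior $\relintr(F)$ contains at least one integer point. In particular, $\aff(F)$ contains an integer point, so $\aff(F) \cap \integer^d \ne \emptyset$. Taking $\ell = 1$ (which belongs to $\natur$ in the paper's conventions) then yields $\bigl(\ell\aff(F)\bigr) \cap \integer^d = \aff(F) \cap \integer^d \ne \emptyset$ uniformly over all $L \in \cL$ and all facets $F$ of $L$. This is precisely hypothesis (b) of Theorem~\ref{polyhedrality:of:closures}.

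Once both hypotheses of the theorem are verified, Parts~I--III of Theorem~\ref{polyhedrality:of:closures} apply verbatim to the class $\cL$ considered in the corollary, giving the three claimed assertions. There is essentially no obstacle: the only conceptual step is recognizing that the defining property of maximal lattice-free sets --- integer points in the relative interior of every facet --- is a strictly stronger version of the weaker affine-hull condition used in Theorem~\ref{polyhedrality:of:closures}, so the scaling factor $\ell$ in the theorem can be chosen trivially as $\ell=1$ and no further argument is required.
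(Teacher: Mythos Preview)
Your proposal is correct and follows exactly the same approach as the paper: you verify that condition (b) of Theorem~\ref{polyhedrality:of:closures} holds with $\ell=1$ by invoking Proposition~\ref{max-lat-free-prop}.\ref{max-lat-free-description}, which guarantees an integer point in the relative interior of every facet of a maximal lattice-free set. The paper's proof is essentially a one-sentence version of what you wrote.
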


Below we explain the relation of Corollary~\ref{polyhedrality:of:closures:cor} to the Chv\'atal and split closure.
For $u \in \integer^d$ let $\gcd(u)$ denote the greatest common divisor of the components of $u$. A set $L$ in $\real^d$ is called a \term{split} if
\[L=\setcond{x \in \real^d}{i \le \sprod{x}{u} \le i+1},\] where $i \in \integer$, $u \in \integer^d \setminus \{o\}$ and $\gcd(u)=1$ (for an illustration see the set $P_2$ from Fig.~\ref{ex-M-free} in the introduction). A rational polyhedron $L$ in $\real^d$ is a split if and only if $\intr(L) \cap \integer^d = \emptyset$ and $\bd(L) = H_1 \cup H_2$, where $H_1, H_2$ are distinct parallel hyperplanes in $\real^d$ both containing points of $\integer^d$. Let $P$ be a rational polyhedron in $\real^d$. Then the set 
\[
	\Ch(P):= \setcond{x \in P}{\sprod{x}{u} \le \floor{h(P,u)} \ \text{for every} \ u \in \integer^d \setminus \{o\} \ \text{satisfying} \ \gcd(u)=1}
\]
is called the \term{Chv\'atal closure} of $P$ (see \cite{MR0313080}). Clearly, $\Ch(P) = R_\cL(P)$ for $\cL$ consisting of all splits $L$ having the form $L= \setcond{x \in \real^d}{\floor{h(P,u)} \le \sprod{x}{u} \le \floor{h(P,u)} + 1}$ with $u \in \integer^d \setminus \{o\}$ and $\gcd(u)=1$. Thus, Corollary~\ref{polyhedrality:of:closures:cor} implies that for every rational polyhedron $P$ in $\real^d$ the set $\Ch(P)$ is a rational polyhedron (see \cite[Theorem~23.1]{MR874114}). Consider a mixed-integer space $\bM$ (given by \eqref{MI:space}). We call a subset $L$ of $\real^d$ an \term{$\bM$-split} if $L= L' \times \real^n$, where $L'$ is a \term{split} in $\real^m$. If $\cL$ is the class of all $\bM$-splits, then we call the functional $\Sp_\bM=R_\cL$ the \term{split closure} (with respect to $\bM$). By Corollary~\ref{polyhedrality:of:closures:cor}, for every rational polyhedron $P$ in $\real^d$ the split closure $\Sp_\bM(P)$ of $P$ with respect to $\bM$ is a rational polyhedron (see \cite[Theorem~3]{MR1059391}). 

The following theorem shows that, if the condition (b) of Corollary~\ref{polyhedrality:of:closures:cor} is fulfilled, then condition (a) can be reformulated in simple terms.

\begin{theorem} \label{max-fac-width-of-max-lat-free}
	Let $\cL$ be a class of rational polyhedra in $\real^d$ such that each $L \in \cL$ is a maximal lattice-free set. Then the following conditions are equivalent.
	\begin{enumerate}[(i)]
		\item	There exists $k \in \natur$ such that for every $L \in \cL$ one has $\maxfw(L) \le k$.
		\item The set $\cL / \Aff(\integer^d)$ is finite.
	\end{enumerate}
\end{theorem}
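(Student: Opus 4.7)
The implication (ii)$\Rightarrow$(i) is immediate. Since $\maxfw$ is invariant under $\Aff(\integer^d)$, a finite number of equivalence classes yields a finite set of values of $\maxfw$. It only remains to note that by iterating Part~II of Proposition~\ref{max-lat-free-prop}, every maximal lattice-free $L$ is $\integer^d$-equivalent to $\real^n\times K$ with $K$ a bounded maximal lattice-free polytope, so $\rec(L)$ is a linear space and therefore $\maxfw(L)<\infty$.

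For (i)$\Rightarrow$(ii), the first step is to reduce to the bounded case. Iterating Part~II of Proposition~\ref{max-lat-free-prop}, each $L\in\cL$ is $\Aff(\integer^d)$-equivalent to $\real^n\times K$ for some $0\le n\le d-1$ and some bounded maximal lattice-free polytope $K\subseteq\real^{d-n}$; inspection of facet normals shows $\maxfw(\real^n\times K)=\maxfw(K)$, so $\maxfw(K)\le k$. Thus it suffices, for each $d'\in\{1,\dots,d\}$, to prove that the bounded maximal lattice-free polytopes $K\subseteq\real^{d'}$ with $\maxfw(K)\le k$ fall into finitely many $\Aff(\integer^{d'})$-classes.

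The plan in this bounded case is to route through the integer hull $K^I:=\conv(K\cap\integer^{d'})$. Choosing $d'$ linearly independent facet outer normals $u_1,\dots,u_{d'}$ of $K$, the condition $\maxfw(K)\le k$ confines $K$ to the parallelepiped $\bigcap_{i=1}^{d'}\{x:h(K,u_i)-k\le\sprod{x}{u_i}\le h(K,u_i)\}$, whose volume equals $k^{d'}/|\det(u_1,\dots,u_{d'})|\le k^{d'}$. Hence $\vol(K^I)\le\vol(K)\le k^{d'}$. Granted that $K^I$ is full-dimensional (the key remaining step, addressed below), Theorem~\ref{finiteness:and:volume:boundedness} yields finitely many $\Aff(\integer^{d'})$-classes of $K^I$. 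For each fixed $K^I$, only finitely many $K$ can arise: every facet of $K$ has a primitive integer outer normal $u$ with $w(K^I,u)\le w(K,u)\le k$, and since $K^I$ is a full-dimensional bounded convex set only finitely many such primitive $u$ exist; for each $u$, the integer support value $h(K,u)$ lies in the interval $[h(K^I,u),h(K^I,u)+k]$, leaving only finitely many options.

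The main obstacle is establishing the full-dimensionality of $K^I$. Arguing by contradiction, assume that $K\cap\integer^{d'}\subseteq H:=\{x:\sprod{a}{x}=c\}$ for some primitive $a\in\integer^{d'}$ and $c\in\integer$, while $K\not\subseteq H$. Then $\phi_{\max}:=\max_{x\in K}\sprod{a}{x}>c$ is attained on a face $F_{\max}$ of $K$ that contains no integer points, so by Part~I of Proposition~\ref{max-lat-free-prop} the face $F_{\max}$ is not a facet. Pick $x^{\ast}\in F_{\max}$ and a direction $n$ in the relative interior of the outer normal cone of $K$ at $x^{\ast}$; then for every sufficiently small $\epsilon>0$ the set $K':=\conv(K\cup\{x^{\ast}+\epsilon n\})$ strictly contains $K$ yet remains lattice-free, because any hypothetical integer point $z\in K'\setminus K$ would have to satisfy $\sprod{z}{u_i}\in(h(K,u_i),h(K,u_i)+\epsilon\sprod{n}{u_i}]$ for some facet normal $u_i$, which is incompatible with $\sprod{z}{u_i}$ being an integer once $\epsilon\sprod{n}{u_i}<1$. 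This contradicts the maximality of $K$; applying the same argument with $-a$ in place of $a$ forces $K\subseteq H$, contradicting full-dimensionality of $K$.
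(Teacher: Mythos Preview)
Your overall strategy matches the paper's: reduce to bounded maximal lattice-free polytopes, bound the volume via the parallelepiped estimate (this is the paper's Proposition~\ref{vol:versus:maxfw}), invoke Theorem~\ref{finiteness:and:volume:boundedness} to get finitely many $\Aff(\integer^{d'})$-classes of integer hulls, and then argue that only finitely many $K$ sit over each fixed integer hull. Your counting argument for the last step (bounding the primitive facet normals $u$ via $w(K^I,u)\le k$ and then the integral support values $h(K,u)$) is a clean alternative to the paper's polar-duality computation and is correct.

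The gap is in what you call ``the main obstacle'', the full-dimensionality of $K^I$. Your enlargement $K'=\conv\bigl(K\cup\{x^\ast+\epsilon n\}\bigr)$ is checked only for lattice points in $K'\setminus K$, but lattice-freeness requires $\intr(K')\cap\integer^{d'}=\emptyset$, and points of $\bd(K)$ can migrate into $\intr(K')$. Concretely, take any facet $F$ of $K$ through $x^\ast$ with $\sprod{u_F}{n}>0$ (such facets exist since $n$ lies in the normal cone at $x^\ast$); then $h(K',u_F)=h(K,u_F)+\epsilon\sprod{u_F}{n}>h(K,u_F)$, so the hyperplane of $F$ no longer supports $K'$, and the integer point guaranteed by Proposition~\ref{max-lat-free-prop}.\ref{max-lat-free-description} in $\relint(F)$ typically lands in $\intr(K')$. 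Thus $K'$ need not be lattice-free, and no contradiction to maximality follows. (Indeed, $K$ \emph{is} maximal, so every genuine enlargement must acquire an interior lattice point; your task was to exhibit one that does not, and this construction does not achieve that.)

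The fix is that no enlargement argument is needed. By Proposition~\ref{max-lat-free-prop}.\ref{max-lat-free-description}, each facet $F_i$ of $K$ carries a point $z_i\in\relint(F_i)\cap\integer^{d'}$. If all the $z_i$ lay in a hyperplane $H$, then $z_i\in\relint(F_i)\cap H$ forces $\aff(F_i)=H$ (otherwise $F_i\cap H$ has dimension at most $d'-2$ and misses $\relint(F_i)$); but a $d'$-dimensional polytope cannot have all its facet hyperplanes equal. Hence $\conv\{z_i\}$, and a fortiori $K^I$, is $d'$-dimensional. This is exactly the one-line observation the paper uses.
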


Our next result deals with convergence of $R_\cL^i(K)$, as $i \rightarrow +\infty$. Since we consider a decreasing sequence of sets, the convergence is expressed as an intersection.

\begin{theorem} \header{The limit of the sequence of $i$-th $\cL$-closures}. \label{convergence:thm}
	let $\cL$ be a nonempty class of $d$-dimensional closed convex sets in $\real^d$ such that for every $L \in \cL$ the set $\rec(L)$ is a linear space. Let
	\[
		M:=\real^d \setminus \bigcup_{L \in \cL} \intr(L).
	\]
	Then the following statements hold.
	\begin{enumerate}[I.]
		\item	For every line-free closed convex set $K$ in $\real^d$ one has
			\[
				\bigcap_{i=0}^{+\infty} R_\cL^i(K) = \conv(K \cap M) + \rec(K).
			\]
		\item If $M$ is a mixed-integer space (that is, $M=\bM$ with $\bM$ given by \eqref{MI:space}), then for every line-free rational polyhedron $P$ in $\real^d$ one has
	\[
		\bigcap_{i=0}^{+\infty} R_\cL^i(P) = \conv(P \cap \bM).
	\]
	\end{enumerate}
\end{theorem}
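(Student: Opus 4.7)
The plan is to establish Part~I by reducing to a claim about extreme points, then to deduce Part~II via a standard polyhedral fact. Write $K_\infty := \bigcap_{i=0}^{+\infty} R_\cL^i(K)$ and $C := \conv(K \cap M) + \rec(K)$. Since each $L \in \cL$ satisfies the hypothesis of Theorem~\ref{descr:R}(II), applying $R_L$ to a line-free closed convex set preserves closedness and the recession cone, and taking intersections inherits both. By induction each $R_\cL^i(K)$ is line-free and closed with $\rec(R_\cL^i(K)) = \rec(K)$; combined with the identity $\rec(\bigcap_i A_i) = \bigcap_i \rec(A_i)$ for decreasing closed convex sets, we conclude that $K_\infty$ is line-free and closed with $\rec(K_\infty) = \rec(K)$. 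The easy inclusion $C \subseteq K_\infty$ follows from an induction showing $K \cap M \subseteq R_\cL^i(K)$ for all $i$ (since $K \cap M$ is disjoint from every $\intr(L)$), followed by passing to the convex hull and adding the common recession cone.

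For the reverse inclusion $K_\infty \subseteq C$, Theorem~\ref{decomp:conv:sets} applied to $K_\infty$ gives $K_\infty = \conv(\ext(K_\infty)) + \rec(K)$, reducing the task to showing $\ext(K_\infty) \subseteq K \cap M$. Assume, for contradiction, that $x \in \ext(K_\infty)$ lies in $\intr(L_0)$ for some $L_0 \in \cL$. Fix $\delta > 0$ with $B(x,\delta) \subseteq \intr(L_0)$ and apply Lemma~\ref{cap:lemma} with the open neighborhood $B(x,\delta)$ to obtain $u \in \real^d$ and $\alpha \in \real$ with $\sprod{u}{x} > \alpha$ and $K_\infty \setminus B(x,\delta) \subseteq \{y : \sprod{u}{y} < \alpha\}$. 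The pivotal subclaim is that $\sprod{u}{v} < 0$ strictly for every $v \in \rec(K) \setminus \{o\}$: the ray $x+tv$ lies in $K_\infty$ for $t \ge 0$ and exits $B(x,\delta)$ for large $t$, so must then satisfy $\sprod{u}{x+tv} < \alpha$, which combined with $\sprod{u}{x} > \alpha$ rules out $\sprod{u}{v} \ge 0$. Consequently $\rec(K) \cap \{v : \sprod{u}{v} \ge 0\} = \{o\}$, and for every $\beta \in \real$ the slice $K \cap \{y : \sprod{u}{y} \ge \beta\}$ has trivial recession cone and is therefore compact.

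Now choose $\beta$ with $\alpha < \beta < \sprod{u}{x}$ and consider the decreasing sequence of nonempty compact sets $G_i := R_\cL^i(K) \cap \{y : \sprod{u}{y} \ge \beta\}$ (each $G_i$ contains $x$). Their intersection equals $K_\infty \cap \{y : \sprod{u}{y} \ge \beta\} \subseteq K_\infty \cap \{y : \sprod{u}{y} \ge \alpha\} \subseteq B(x,\delta) \subseteq \intr(L_0)$, which is a compact subset of the open set $\intr(L_0)$. Lemma~\ref{decr:convergence} supplies Hausdorff convergence of $G_i$ to that intersection, so $G_i \subseteq \intr(L_0)$ for all sufficiently large $i$. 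Equivalently, $R_\cL^i(K) \setminus \intr(L_0) \subseteq \{y : \sprod{u}{y} < \beta\}$; taking convex hulls, which remain inside this open halfspace, gives $R_\cL^{i+1}(K) \subseteq R_{L_0}(R_\cL^i(K)) \subseteq \{y : \sprod{u}{y} < \beta\}$. But $x \in K_\infty \subseteq R_\cL^{i+1}(K)$ and $\sprod{u}{x} > \beta$, the desired contradiction.

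For Part~II, Part~I specialized to $K = P$ gives $\bigcap_i R_\cL^i(P) = \conv(P \cap \bM) + \rec(P)$. A classical Meyer-type fact for rational polyhedra asserts $\rec(\conv(P \cap \bM)) = \rec(P)$ whenever $P \cap \bM \ne \emptyset$: each rational generator of $\rec(P)$ can be scaled so that its first $m$ coordinates are integer, making it a recession direction of the mixed-integer hull. This collapses the Minkowski sum to $\conv(P \cap \bM)$; the $P \cap \bM = \emptyset$ case is immediate. The main obstacle is the strict-negativity step in the second paragraph: a priori Lemma~\ref{cap:lemma} yields only $\sprod{u}{v} \le 0$ on $\rec(K)$, whereas compactness of the slice, which is what permits the application of Lemma~\ref{decr:convergence}, requires strict inequality on $\rec(K) \setminus \{o\}$. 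Extracting this strict inequality from the open-halfspace separation built into the Cap Lemma, together with the ray $x+tv$ eventually leaving $B(x,\delta)$, is the technical hinge of the argument.
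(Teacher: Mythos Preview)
Your proof is correct and follows the same overall architecture as the paper's: establish $\ext(K_\infty) \subseteq M$ via the Cap Lemma, then assemble both inclusions from Theorem~\ref{decomp:conv:sets} together with $\rec(K_\infty) = \rec(K)$. The one genuine difference lies in the compactness step. The paper only knows a priori that the cap $R \cap H^-$ is bounded (it sits inside the bounded neighborhood $U$), and therefore invokes a separate auxiliary result (Lemma~\ref{boundedness:lemma}) asserting that a decreasing sequence of closed convex sets with bounded nonempty intersection is itself eventually bounded; only after that can Lemma~\ref{decr:convergence} be applied. You bypass this lemma entirely by extracting the strict inequality $\sprod{u}{v} < 0$ for every $v \in \rec(K) \setminus \{o\}$ directly from the Cap Lemma: the ray $x + tv$ eventually leaves $B(x,\delta)$ while remaining in $K_\infty$, forcing it into the open halfspace $\{\sprod{u}{\cdot} < \alpha\}$, which is incompatible with $\sprod{u}{v} \ge 0$. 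This gives $\rec(K) \cap \{v : \sprod{u}{v} \ge 0\} = \{o\}$, so already the slice $K \cap \{\sprod{u}{\cdot} \ge \beta\}$ is compact and the $G_i$ are compact from the outset. Your route is a bit more self-contained (one fewer auxiliary lemma); the paper's route is arguably more robust in that it would still go through if the Cap Lemma delivered only non-strict separation. Part~II is handled identically in both arguments, via $\rec\bigl(\conv(P \cap \bM)\bigr) = \rec(P)$, which the paper records as Lemma~\ref{rec:MI:hull}.
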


Under the given assumptions, Theorem~\ref{convergence:thm} cannot be improved to a result on finite convergence as an example from \cite[Example~2]{MR1059391} shows (but see \cite[Theorem~4]{DelPiaWeismantel10} for conditions sufficient for a finite convergence). 

\begin{corollary} \label{convergence:bounded:case}
	Let $K$, $\cL$ and $M$ be as in Theorem~\ref{convergence:thm} and let $K$ be bounded. Then $R_\cL^i(K)$ converges to $\conv(K \cap M)$ in the Hausdorff distance.
\end{corollary}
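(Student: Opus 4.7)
The plan is to combine Theorem~\ref{convergence:thm}~I (which identifies the intersection of the decreasing sequence) with Lemma~\ref{decr:convergence} (which upgrades such an intersection to Hausdorff convergence, provided the sequence consists of \emph{nonempty compact convex} sets). Since $K$ is bounded, closed and convex, it is automatically line-free and satisfies $\rec(K) = \{o\}$, so Theorem~\ref{convergence:thm}~I applies and yields
\[
	\bigcap_{i=0}^{+\infty} R_\cL^i(K) \;=\; \conv(K \cap M) + \rec(K) \;=\; \conv(K \cap M).
\]

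Before invoking Lemma~\ref{decr:convergence}, I will verify inductively that each $R_\cL^i(K)$ is a compact convex set. Convexity is built into the definitions. Boundedness is automatic because the tautological inclusion $R_L(C) \subseteq C$ forces the entire sequence to stay inside $K$. The delicate point is closedness: for an arbitrary $L$, the set $R_L(K)$ need not be closed (cf.\ Fig.~\ref{fig-two-orhtants}), but under the standing hypothesis that $\rec(L)$ is a linear space for every $L \in \cL$, Theorem~\ref{descr:R}~II applies at each step and guarantees that each individual reduction $R_L\bigl(R_\cL^{i-1}(K)\bigr)$ is closed; then $R_\cL\bigl(R_\cL^{i-1}(K)\bigr)$ is closed as an intersection of closed sets. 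This is the one place where anything could in principle go wrong, and the assumption on $\rec(L)$ handles it cleanly.

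With compactness in hand I distinguish two cases. If every set $R_\cL^i(K)$ is nonempty, Lemma~\ref{decr:convergence} immediately yields Hausdorff convergence to the intersection, which by the displayed identity equals $\conv(K \cap M)$. If instead some $R_\cL^{i_0}(K)$ is empty, then by monotonicity $R_\cL^i(K) = \emptyset$ for all $i \ge i_0$; and the finite intersection property for decreasing families of compact sets, combined with the displayed identity, forces $\conv(K \cap M) = \emptyset$ as well, so the Hausdorff distance is already $0$ for $i \ge i_0$. In either case $R_\cL^i(K)$ converges to $\conv(K \cap M)$ in Hausdorff distance, which finishes the proof. Overall, this corollary is really a short bookkeeping argument on top of Theorem~\ref{convergence:thm}; the only substantive input beyond that theorem is verifying that the boundedness assumption on $K$ propagates through the closure operation to give compactness of every iterate.
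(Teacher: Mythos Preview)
Your proof is correct and follows essentially the same route as the paper: identify the intersection via Theorem~\ref{convergence:thm} and then upgrade to Hausdorff convergence using Lemma~\ref{decr:convergence}. You are in fact slightly more careful than the paper's own proof---you invoke Part~I of Theorem~\ref{convergence:thm} (which is the right part in the stated generality, since $K$ need not be a rational polyhedron and $M$ need not be a mixed-integer space; the paper's citation of Part~II appears to be a slip), and you explicitly handle the case where some iterate becomes empty, which Lemma~\ref{decr:convergence} does not cover.
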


Corollary~\ref{convergence:bounded:case} extends Theorem~3 from \cite{MR1814548} which states that the sequence of split closures $\Sp^i_\bM(P)$ converges to $\conv(P \cap \bM)$ in the Hausdorff metric, as $i \rightarrow +\infty$, in the case that $P$ is a rational polytope in $\real^d$ and $\bM$ is a mixed-integer space (given by \eqref{MI:space}). Del Pia and Weismantel showed that the latter result from from \cite{MR1814548} also holds if $P$ is an arbitrary rational polyhedron (see  \cite[Theorem~2]{DelPiaWeismantel10}). We emphasize that, in the case when no boundedness assumptions are made, the convergence with respect to the Hausdorff distance is a stronger assertion than the convergence in the sense of Part~II of Theorem~\ref{convergence:thm}. Thus, compared to Theorem~2 from \cite{DelPiaWeismantel10}, Theorem~\ref{convergence:thm} provides an infinite-convergence result with weaker assumptions and a weaker assertion.

\section{Proofs} \label{sect:proofs}

\begin{proof}[Proof of Theorem~\ref{descr:R}]

Let $\Rbar:= \cl(R)$. First we show the inclusion 
\begin{equation} \label{ext:R:bar:ext:R}
	\ext (\Rbar) \subseteq \ext(R).
\end{equation}
Since $\Rbar \subseteq K$, we have $\ext(\Rbar) \subseteq K$. Let us show that $\ext(\Rbar) \subseteq K \setminus \intr(L)$. Assume the contrary, that is there exists $x \in \ext(\Rbar) \cap \intr(L)$. By the cap lemma (Lemma~\ref{cap:lemma}) applied to $\Rbar$ and the open neighborhood $\intr(L)$ of $x$ there exists a closed halfspace $H^+$ such that $x \not\in H^+$ and $\Rbar \setminus \intr(L) \subseteq H^+$. It follows that $R = \conv\bigl(K\setminus \intr(L)\bigr) \subseteq H^+$ and by this $\Rbar = \cl(R) \subseteq H^+$. On the other hand, $x$ is a point of $\Rbar$ not lying in $H^+$, a contradiction to $\Rbar \subseteq H^+$. Hence \eqref{ext:R:bar:ext:R} is fulfilled.

	\emph{Part~\ref{ext:R}.} Let $x \in \ext(R)$. We show that (ii) is fulfilled. By \eqref{ext:conv:inclusion}, we have $x \in K \setminus \intr(L)$. Hence, by the \emph{separation theorem} (see \cite[Theorem~1.3.4]{MR1216521}), there exists a hyperplane $H$ with $x \in H$ and $H \cap \intr(L) = \emptyset$.  One has $x \in \ext(K \cap H)$. In fact, if $x \not \in \ext(K \cap H)$, then there exist a segment $J \subseteq K \cap H \subseteq K \setminus \intr(L)$ such that $x \in \relintr(J)$. But then $x \not\in \ext(R)$, a contradiction. Let $I$ be the face of $K$ with $x \in \relintr(I)$. We have $0 \le \dim (I) \le 1$. This can be shown arguing by contradiction. If $\dim (I) \ge 2$, then $x \in \relintr(I \cap H)$ and
	\[\dim (I \cap H) \ge \dim (I) -1 \ge 1\]
	contradicting $x \in \ext(K \cap H)$. In the case $\dim (I)=0$, one has $x \in \ext(K)$ and thus $x \in \ext(K) \setminus \intr(L)$. Consider the case $\dim (I)=1$. If $x \not\in \bd(L)$, then $x \not\in L$. Hence there exists a sufficiently small segment $I'$ with $x \in \relintr(I') \subseteq I' \subseteq I \setminus \intr(L) \subseteq K \setminus \intr(L)$. Then $x \not\in \ext(R)$, a contradiction. Thus, $x \in \bd(L)$. The set $I \setminus \{x\}$ consists of two connected components $I_1$ and $I_2$. If both $I_1$ and $I_2$ contain points of $K \setminus \intr(L)$, it follows $x \not\in \ext(R)$, a contradiction to the choice of $x$. Thus, we can assume that $I_1$ or $I_2$ is a subset of $\intr(L)$. Without loss of generality let $I_1 \subseteq \intr(L)$. Then, by convexity of $L$, one has $I_2 \cap L = \emptyset$. This shows (i) $\Rightarrow$ (ii).

	Let us show (ii) $\Rightarrow$ (i). We consider an arbitrary $x \in \real^d$ satisfying (ii). In the case $x \in \ext(K) \setminus \intr(L)$, using the definition of the notion of extreme point, we see that the conditions $x \in R \subseteq K$ and $x \in \ext(K)$ imply (i). Otherwise $x$ lies in the relative interior of a one-dimensional face $I$ of $K$. By assumption, $I \setminus \{x\}$ consists of two connected components $I_1$ and $I_2$ that satisfy $I_1 \subseteq \intr(L)$ and $I_2 \cap L = \emptyset$. Let us verify $x \in \ext(R)$. We argue by contradiction. Assume that $x$ is not an extreme point of $R$, that is, there exist $p_1,p_2 \in R$ such that $p_1 \ne p_2$ and $x \in \relintr([p_1,p_2])$. By Theorem~\ref{strong:caratheodory} for each $i \in \{1,2\}$ there exists an affinely independent set $X_i \subseteq K \setminus \intr(L)$ such that $p_i \in \relintr\bigl(\conv(X_i)\bigr)$. By Theorem~\ref{char:relint} we deduce that $x \in \relintr(\conv(X))$ for $X:=X_1 \cup X_2$. Furthermore, $x$ is not an extreme point of $\conv(X)$ since $x \in \relintr([p_1,p_2])$ and $p_1, p_2 \in \conv(X)$. In view of Theorem~\ref{strong:caratheodory}, there exists an affinely independent set $Y \subseteq \ext\bigl(\conv(X)) \subseteq X \subseteq K \setminus \intr(L)$ with $x \in \relintr \bigl(\conv (Y)\bigr)$. We thus have $x \not\in Y \subseteq K$, $x \in \relint(I)$ and $I$ is a face of $K$. Hence, using properties of faces, we obtain $Y \subseteq I$. The affinely independent set $Y$ lies in a one-dimensional convex set $I$ and satisfies $x \in \relint \bigl(\conv (Y)\bigr)$. It follows that $Y =\{y_1,y_2\}$, where $y_1 \in I_1$ and $y_2 \in I_2$. But $I_1 \subseteq \intr(L)$ and thus $y_1 \in \intr(L)$, a contradiction to $Y \subseteq K \setminus \intr(L)$. This shows $x \in \ext(R)$ and completes the proof of (ii) $\Rightarrow$ (i).

	\emph{Part~\ref{rep:R}.} We assume that $\rec(L)$ is a linear space. Let us show the equality $\rec(K)=\rec(\Rbar)$. Inclusion $\Rbar\subseteq K$ implies $\rec(\Rbar) \subseteq \rec(K)$. In order to verify the reverse inclusion we show
	\begin{align} \label{ray:in:R}
		& p + tu \in R & & \forall \ p \in K \setminus \intr(L) \quad \forall \ u \in \rec(K) \setminus \{o\} \quad \forall \ t \ge 0.
	\end{align}
	We choose arbitrary $p$ and $u$ as in \eqref{ray:in:R}. Consider the convex set $J:=\setcond{p + t u}{t \ge 0} \cap   \intr(L)$. Let us show that $J$ is bounded. Assume the contrary. Since $p \not\in\intr(L)$, we see there exists $t' \ge 0$ such that $p+ t' u \in \bd(L)$. Since $J$ is unbounded, we have $(p+ t' u) + t u \in L$ for every $t \ge 0$. Thus $u \in \rec(L)$. On the other hand, by the convexity of $L$, $(p+t' u) - tu \not\in L$ for every $t>0$. Thus, $u \ne o$, $u \in \rec(L)$, $-u \not\in\rec(L)$. It follows that $\rec(L)$ is not a linear space, a contradiction to the choice of $L$. The latter shows that $J$ is bounded. The boundedness of $J$ implies $p + s u  \in K \setminus \intr(L)$ for all sufficiently large $s \ge 0$. Thus, every $p+ t u$ with $t \ge 0$ is a convex combination of $p \in K \setminus \intr(L)$ and $p+s u \in K \setminus \intr(L)$ with an appropriate $s \ge 0$. It follows $p+ tu \in R$. Hence \eqref{ray:in:R} is fulfilled.  Since $K \setminus \intr(L) \ne \emptyset$, \eqref{ray:in:R} implies $\rec(K) \subseteq \rec(\Rbar)$. Thus we have verified $\rec(\Rbar) = \rec(K)$.

	Let us show that $R$ is closed, that is, $R=\Rbar$. The inclusion $R \subseteq \Rbar$ is trivial. In view of  \eqref{mink:union:rep} for showing $\Rbar \subseteq R$ it is sufficient to verify the inclusions $\ext(\Rbar) \subseteq R$ and $\extr(\Rbar) \subseteq R$. The inclusion $\ext(\Rbar) \subseteq R$ follows from \eqref{ext:R:bar:ext:R}. For showing $\extr(\Rbar) \subseteq R$  we consider an arbitrary extremal ray $I$ of $\Rbar$. Let $x$ be the endpoint of $I$. Since $x$ is an extreme point of $I$ and $I$ is a face of $\Rbar$, it follows that $x \in \ext(\Rbar)$. Then \eqref{ext:R:bar:ext:R} implies  $x \in \ext(R)$. We represent $I$ by $I = \setcond{x+ t u}{t \ge 0}$, where $u \in \rec(\Rbar)$. Since $\rec(\Rbar) \subseteq \rec(K)$, by \eqref{ray:in:R} one has $x+ tu \in R$ for every $t \ge 0$. That is, $I \subseteq R$. We have verified $\Rbar \subseteq R$. Thus, $R=\Rbar$. Equality \eqref{R:rep:precise} follows from \eqref{mink:sum:rep}. Equality \eqref{R:rep:less:precise} is a consequence of \eqref{R:rep:precise} and Part~I.
\end{proof}

We shall use the following two lemmas.

\begin{lemma} \label{no:reduction:case}
	Let $K$ and $L$ be closed convex sets in $\real^d$ such that  $\lineal(K) \not\subseteq \rec(L) \cup (-\rec(L))$. Then $R_L(K) = K$.
\end{lemma}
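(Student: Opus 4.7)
The plan is to establish the nontrivial inclusion $K \subseteq R_L(K)$; the reverse inclusion $R_L(K) \subseteq K$ is immediate since $K$ is convex and contains $K \setminus \intr(L)$. By the hypothesis we may fix a vector $v \in \lineal(K)$ with $v \notin \rec(L)$ and $-v \notin \rec(L)$. Two features of $v$ drive the argument. First, $v \in \lineal(K)$ means that for every $x \in K$ the entire line $\setcond{x + tv}{t \in \real}$ is contained in $K$. Second, from the defining formula~\eqref{rec:K:def} of $\rec(L)$ together with the closedness and convexity of $L$, the assumption $\pm v \notin \rec(L)$ forces the intersection of any line parallel to $v$ with $L$ to be a bounded (possibly empty) closed segment.

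Next I pick an arbitrary $x \in K$ and split into cases. If $x \notin \intr(L)$, then $x \in K \setminus \intr(L) \subseteq R_L(K)$ and there is nothing to prove. Otherwise $x \in K \cap \intr(L)$, and I consider the line $\ell := \setcond{x + tv}{t \in \real}$, which lies in $K$ by the first feature above. The set $S := \ell \cap L$ is a bounded closed segment on $\ell$ containing an open neighbourhood of $x$, because $x \in \intr(L)$ and $S$ is bounded by the second feature above. Hence there exist $T^+, T^- > 0$ such that the two endpoints $x + T^+ v$ and $x - T^- v$ of $S$ lie in $\bd(L)$, and in particular belong to $K \setminus \intr(L)$. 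The convex-combination identity
\[
 x \;=\; \frac{T^-}{T^+ + T^-}\,(x + T^+ v) \;+\; \frac{T^+}{T^+ + T^-}\,(x - T^- v)
\]
then exhibits $x$ as an element of $\conv\bigl(K\setminus \intr(L)\bigr) = R_L(K)$, completing the proof.

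The only step requiring genuine care is the boundedness of $S$, which is precisely where both hypotheses $v \notin \rec(L)$ and $-v \notin \rec(L)$ are used: if, say, $v$ lay in $\rec(L)$, then $S$ would extend to $+\infty$ in the direction $v$ and the required point $x + T^+ v$ would not exist. This also clarifies the symmetry of the hypothesis in $v$ and $-v$, as the condition $\lineal(K) \not\subseteq \rec(L) \cup (-\rec(L))$ is exactly what guarantees a direction $v \in \lineal(K)$ along which $L$ is bounded in both senses.
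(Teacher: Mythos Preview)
Your proof is correct and follows essentially the same approach as the paper's: both pick a direction $v \in \lineal(K)$ with $\pm v \notin \rec(L)$, observe that the line through an arbitrary $x \in K$ in direction $v$ lies in $K$ and meets $L$ in a bounded set, and conclude $x \in R_L(K)$. The paper compresses the last step by writing $x \in I = R_L(I) \subseteq R_L(K)$ for the line $I$, whereas you spell out the convex combination explicitly; this is a difference in presentation only.
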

\begin{proof}
	We choose $u \in \lineal(K)$ such that both $u$ and $-u$ do not belong to $\rec(L)$.
	Consider an arbitrary $x \in K$. Let $I$ be the line through $x$ parallel to $u$. By the choice of $u$, the intersection $I \cap L$ is bounded. Hence $x \in I=\red_L(I) \subseteq \red_L(K)$. We have shown $K \subseteq \red_L(K)$. The reverse inclusion is trivial.
\end{proof}

The proof of the following lemma is straightforward (and is therefore omitted).

\begin{lemma} \label{projection:lemma}
	Let $K, L$ be closed convex sets in $\real^d$ having the form $K= K' \times \real^n$ and $L= L' \times \real^n$ where $K'$ and $L'$ are closed convex sets in $\real^m$ and $m, n \ge 0$ are integers with $m+n=d$. Then $R_L(K) = R_{L'}(K') \times \real^n$.
\end{lemma}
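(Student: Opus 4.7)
The plan is to unfold the definition $R_L(K) = \conv(K \setminus \intr(L))$ and exploit the Cartesian product structure at each step. The argument breaks into two elementary identities, so this is essentially a routine verification; I do not anticipate a serious obstacle, only the need to be careful with set-theoretic bookkeeping.

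First, I would observe that since $\real^n$ is open in itself, the interior in $\real^d$ of a set of the form $L' \times \real^n$ equals $\intr(L') \times \real^n$, where $\intr(L')$ is taken in $\real^m$. Combining this with the set-theoretic identity $(A \times \real^n) \setminus (B \times \real^n) = (A \setminus B) \times \real^n$, I obtain
\[
K \setminus \intr(L) = \bigl(K' \setminus \intr(L')\bigr) \times \real^n.
\]

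The remaining step is the general fact that, for any $S \subseteq \real^m$, one has $\conv(S \times \real^n) = \conv(S) \times \real^n$. The inclusion ``$\subseteq$'' follows by projecting a convex combination onto the first factor: if $(x,y) = \sum_i \lambda_i (a_i, b_i)$ with $a_i \in S$, then $x = \sum_i \lambda_i a_i \in \conv(S)$ while trivially $y \in \real^n$. For the reverse inclusion, given $(x,y) \in \conv(S) \times \real^n$ with $x = \sum_i \lambda_i a_i$, one simply writes $(x,y) = \sum_i \lambda_i (a_i, y)$, a convex combination of points in $S \times \real^n$. Applying this identity with $S = K' \setminus \intr(L')$ gives $R_L(K) = \conv\bigl(K' \setminus \intr(L')\bigr) \times \real^n = R_{L'}(K') \times \real^n$, as required. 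The only potential subtlety lies in the degenerate cases $m=0$ or $n=0$, but both reduce to trivialities once one interprets $\real^0$ as a single point.
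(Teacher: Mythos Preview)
Your proof is correct and supplies precisely the routine verification that the paper itself omits, remarking only that the lemma ``is straightforward.'' The two steps you isolate---the product behaviour of the interior and of the convex hull---are the natural ingredients, so there is nothing to compare.
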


\begin{proof}[Proof of Theorem~\ref{polyhedrality:characterization}]
	Let us show \eqref{rec:lin:or:halfspace} $\Rightarrow$ \eqref{closed->closed}. Assume that \eqref{rec:lin:or:halfspace} is fulfilled. If $L$ is a halfspace, \eqref{closed->closed} is trivial. Thus, we assume that $\rec(L)$ is a linear space. We consider an arbitrary closed convex set $K$ and show that $R_L(K)$ is closed. The case $R_L(K) = \emptyset$ is trivial. Thus, let $R_L(K) \ne \emptyset$. If $\lineal(K) \not\subseteq \rec(L)$, then by Lemma~\ref{no:reduction:case}, we have $R_L(K) = K$ and thus $R_L(K)$ is closed. Consider the case $\lineal(K) \subseteq \rec(L)$. If $K$ is line-free the assertion follows from Theorem~\ref{descr:R}.\ref{rep:R}. If $K$ is not line-free, then choosing an appropriate coordinate system in $\real^d$ we can assume that $K$ and $L$ can be given as in Lemma~\ref{projection:lemma} with $K'$ being line-free. Thus, this case can be reduced to the case of a line-free $K$.

	Let us show \eqref{rec:lin:or:halfspace} $\Rightarrow$ \eqref{polyh->polyh}. Assume that \eqref{rec:lin:or:halfspace} is fulfilled. Let $P$ be an arbitrary polyhedron in $\real^d$. It suffices to consider the case $R_L(P) \ne \emptyset$. If $\lineal(P) \not\subseteq \rec(L)$ we argue in the same way as in the proof of the implication \eqref{rec:lin:or:halfspace} $\Rightarrow$ \eqref{closed->closed} and obtain $R_L(P)=P$. If $P$ is line-free we apply Theorem~\ref{descr:R}. By Theorem~\ref{descr:R}.\ref{rep:R}, $R_L(P)$ is a closed set and $\rec\bigl(R_L(P)\bigr) = \rec(P)$. Thus, $\rec\bigl(R_L(P)\bigr)$ is a polyhedral cone. By Theorem~\ref{descr:R}.\ref{ext:R}, every extreme point of $R_L(P)$ is either a vertex of $P$ or lies in the relative interior of an edge of $P$. Furthermore, Theorem~\ref{descr:R}.\ref{ext:R} yields that the relative interior of every edge of $P$ contains at most one extreme point of $R_L(P)$. Thus, the number of extreme points of $R_L(P)$ is finite. Taking into account \eqref{mink:sum:rep}, it follows that $R_L(P)$ is a Minkowski sum of a polytope and a polyhedral cone. Hence $R_L(P)$ is a polyhedron. The case that $\rec(P) \subseteq \rec(L)$ and $P$ is not line-free can be reduced to the case of line-free sets $P$ with the help of  Lemma~\ref{projection:lemma}.

	 Let us verify the implications (i) $\Rightarrow$ (iii) and (ii) $\Rightarrow$ (iii). Assume that (iii) is not fulfilled. Let us show that then (i) and (ii) are also not fulfilled. We represent $\rec(L)$ as $\rec(L) = X + C$, where $X$ is a linear space, $C$ is a line-free closed convex cone and $X \cap \lin(C) = \{o\}$. One has $\dim (X) + \dim(C) = \dim \bigl(\rec(L)\bigr)$. Since (iii) is not fulfilled, either $\dim(C)=1$ and $\dim \bigl(\rec(L)\bigr)<d$ or $\dim(C) \ge 2$.  In the case $\dim(C)=1$, we choose an arbitrary $u_1 \in C \setminus \{o\}$ and an arbitrary $u_2 \in \real^d \setminus \lin\bigl(\rec(L)\bigr)$. In the case $\dim(C) \ge 2$ we consider a two-dimensional linear space $Y$ such that $Y \cap C$ is two dimensional and choose $u_1,u_2$ to be a basis of $Y$ such that $Y \cap C = \cone \bigl(\{u_1,u_2\}\bigr)$. We fix $p \in \intr(L)$. Since $-u_1 \not\in \rec(L)$ and $p \in \intr(L)$, there exists $t \ge 0$ such that $p- tu_1 \in \bd(L)$. We define $K := p- 2 t u_1 + \cone (\{u_1,-u_2\})$. For an illustration to the following arguments see Fig.~\ref{fig-constructing-non-closed}. Let $R:=R_L(K)$. By Theorem~\ref{descr:R}.\ref{ext:R}, we have $p - t u_1 \in \ext(R)$. It follows that $p \not\in R$ since otherwise one had $p- t u_1 = \frac{1}{2} (p- 2 t u_1) + \frac{1}{2} p$ with both $p$ and $p-2 t u_1$ belonging to $R$, which would yield a contradiction to $p-t u_1 \in \ext(R)$. On the other hand, for every $\varepsilon >0$ on has $p - \varepsilon u_2 \in R$. This is shown as follows. Since $t u_1 - \varepsilon u_2 \not \in \rec(L)$, the set $\setcond{ p - t u_1 + s(t u_1 - \varepsilon u_2)}{s \ge 0} \cap \intr(L)$ is bounded. Furthermore, from the definition of $K$ it follows $p-t u_1 + s (t u_1 - \varepsilon u_2) \in K$ for every $s \ge 0$. Therefore one can choose a sufficiently large $s > 1$ such that the point $p - t u_1 + s (t u_1 - \varepsilon u_2)$ belongs to $K \setminus \intr(L)$. The latter implies $p- \varepsilon u_2 \in R$ since 
	\[
		p - \varepsilon u_2 = \Bigl(1- \frac{1}{s}\Bigr) (p-t u_1) + \frac{1}{s} \Bigl( p - t u_1 + s (tu_1 - \varepsilon u_2)\Bigr),
	\] 
	that is, $p- \varepsilon u_2$ is a convex combination of the points $p- tu_1$ and $p -t u_1 + s (t u_1 - \varepsilon u_2)$ both belonging to $R$. Since $\varepsilon>0$ is arbitrary, we conclude that $p$ belongs to the closure of $R$ but not to $R$. Thus, assuming that (iii) is not fulfilled we have shown that (ii) is not fulfilled. Since $K$ is a polyhedron and $R$ is not a polyhedron, also (i) is not fulfilled.
\end{proof}

\begin{figtabular}{c}
\unitlength=1mm
\begin{picture}(110,45)
	\put(15,-3){\includegraphics[width=80\unitlength]{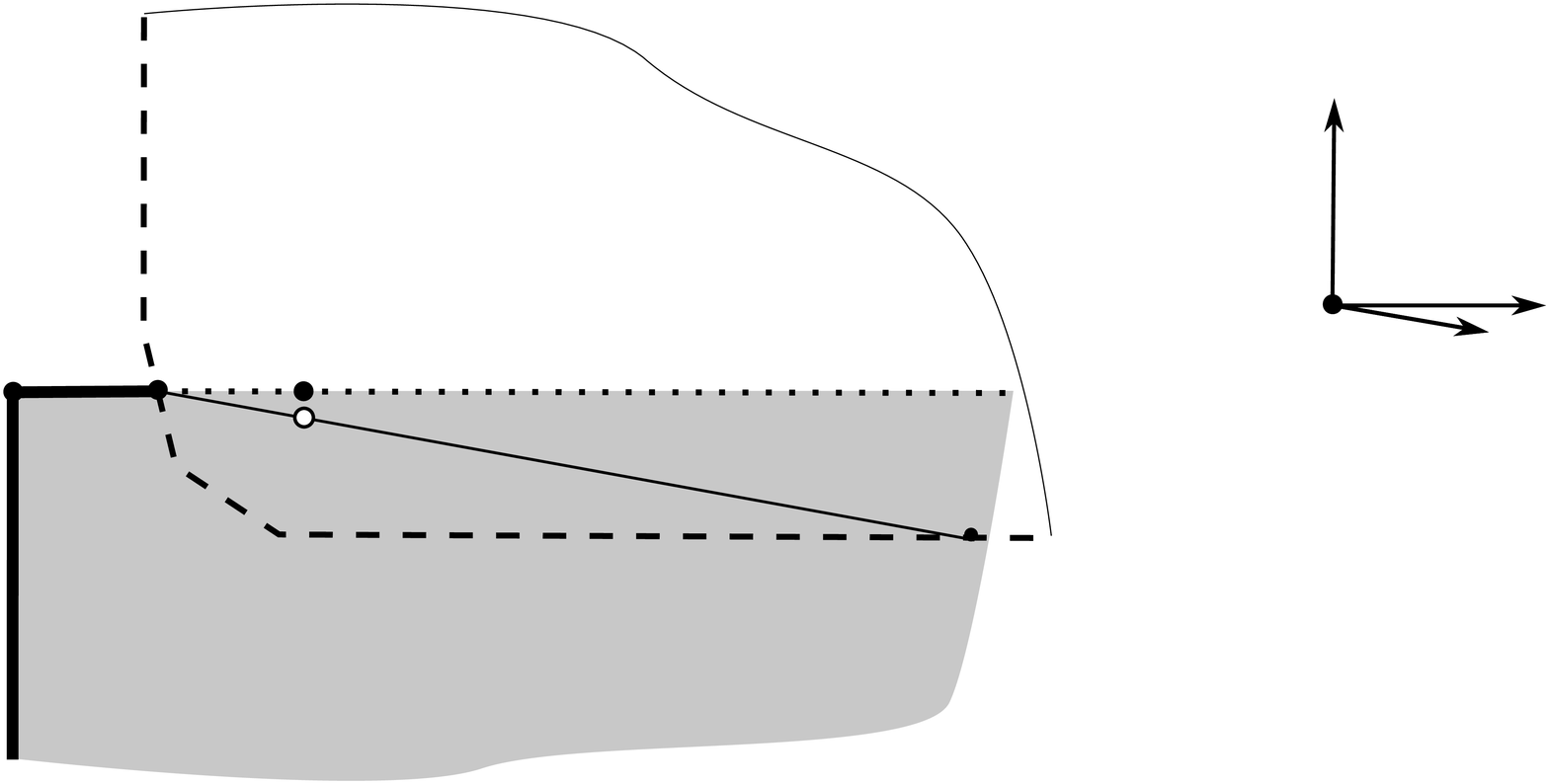}}
	\put(2,18){$p-2 t u_1$}
	\put(30,20){$p$}
	\put(95,22){$u_1$}
	\put(85,32){$u_2$}
	\put(10,4){$K$}
	\put(18,27){$L$}
	\put(87,16){$t u_1 - \varepsilon u_2$}
\end{picture}
\\
\parbox[t]{0.90\textwidth}{\caption{\label{fig-constructing-non-closed}Illustration to the proof of the implications (i) $\Rightarrow$ (iii) and (ii) $\Rightarrow$ (iii) of Theorem~\ref{polyhedrality:characterization} for the case $d=2$ and $\dim (C)=2$. The dashed line is the boundary of $L$, the shaded region is $K$ and the white dot is the point $p-\varepsilon u_2$.}}
\end{figtabular}

The following two lemmas will be used in the proof of Theorem~\ref{polyhedrality:of:closures}.

\begin{lemma} \label{integrality:lemma}
	Let $L$ be a $d$-dimensional rational polyhedron in $\real^d$ with $\maxfw(L)<\infty$ and let $p \in \rational^d \cap \intr(L)$. Assume that there exist constants $k, \ell, m \in \natur$ such that the following conditions are fulfilled.
	\begin{enumerate}[(a)]
		\item $\maxfw(L) \le k$.
		\item For every facet $F$ of $L$ one has $(\ell \aff(F)) \cap \integer^d \ne \emptyset$.
		\item $m \cdot p \in \integer^d$.
	\end{enumerate}
	Then the following statements hold.
	\begin{enumerate}[I.]
		\item $(k \cdot \ell \cdot m)! \cdot \ext\bigl((L-p)^\circ\bigr) \subseteq \integer^d$.
		\item For every $z \in \integer^d$, the value $(k \cdot \ell \cdot m)! \cdot \|z \|_{L-p}$ is a nonnegative integer.
	\end{enumerate}		
\end{lemma}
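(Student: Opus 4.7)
The plan is first to put $L - p$ in explicit inequality form using the facet normals of $L$. Since $\maxfw(L) < \infty$ forces $\rec(L - p) = \rec(L)$ to be a linear space, and $p \in \intr(L)$ gives $o \in \intr(L - p)$ with $L - p \ne \real^d$, formulas \eqref{polar:over:hyp:rep} and \eqref{gauge:over:sup:func} are applicable to $P = L - p$. Translating the representation \eqref{P:repr:U(P)} of $L$ by $-p$, I will work with
\[
	L - p = \setcond{x \in \real^d}{\sprod{u}{x} \le \beta_u \quad \forall u \in U(L)}, \qquad \beta_u := h(L, u) - \sprod{u}{p} > 0.
\]

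The main step will be to show that $\ell m \beta_u$ is a positive integer in $\{1, \ldots, k\ell m\}$ for every $u \in U(L)$. The upper bound $\beta_u \le w(L, u) \le k$ follows from \eqref{width:func:equations}, from $p \in L$, and from assumption (a). Integrality of $\ell m \beta_u$ is the mildly subtle point: since $u \in U(L)$ is primitive, one has $\sprod{u}{\integer^d} = \integer$, and the facet $F$ of $L$ with outer normal $u$ satisfies $\aff(F) = \setcond{x \in \real^d}{\sprod{u}{x} = h(L, u)}$. Hence condition (b) translates into $\ell h(L, u) \in \integer$, which combined with $m \sprod{u}{p} \in \integer$ from (c) yields $\ell m \beta_u \in \integer$. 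Being a positive integer no larger than $k \ell m$, $\ell m \beta_u$ divides $(k \ell m)!$, so
\[
	\frac{(k \ell m)!}{\beta_u} \;=\; \ell m \cdot \frac{(k \ell m)!}{\ell m \beta_u} \;\in\; \natur.
\]

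With this in hand, Part~I is immediate: \eqref{polar:over:hyp:rep} yields $\ext\bigl((L - p)^\circ\bigr) \subseteq \setcond{u/\beta_u}{u \in U(L)}$, and for each such vertex, $(k \ell m)! \cdot (u/\beta_u) = \bigl((k \ell m)!/\beta_u\bigr) u$ lies in $\integer^d$ because $u \in \integer^d$. For Part~II, \eqref{gauge:over:sup:func} gives $\|z\|_{L - p} = \max \setcond{\sprod{u}{z}/\beta_u}{u \in U(L)}$; for $z \in \integer^d$ each term $(k \ell m)! \sprod{u}{z}/\beta_u = \sprod{u}{z} \cdot \bigl((k \ell m)!/\beta_u\bigr)$ is an integer, so the maximum is a nonnegative integer. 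The only step requiring care is the translation of (b) into $\ell h(L,u) \in \integer$, which rests on the primitivity of $u$.
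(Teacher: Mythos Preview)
Your proof is correct and follows essentially the same route as the paper's: both identify $(L-p)^\circ$ via \eqref{polar:over:hyp:rep}, show that $\ell m\, h(L-p,u)$ is a positive integer bounded by $k\ell m$ for each $u \in U(L)$, and conclude divisibility by $(k\ell m)!$. One small remark: the step $\ell\, h(L,u) \in \integer$ actually only uses $u \in \integer^d$ (so that $\sprod{u}{z} \in \integer$ for the integer point $z \in \ell\aff(F)$), not the primitivity of $u$.
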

\begin{proof}
	The assumption $\maxfw(L) < +\infty$ implies that $\rec(L)$ is a linear space.
	Thus, by \eqref{polar:over:hyp:rep} and \eqref{P:repr:U(P)} we have
	\begin{equation} \label{norm:z:L-p}
		(L-p)^\circ = \conv \left( \setcond{\frac{u}{h(L-p,u)}}{u \in U(L)} \right).
	\end{equation}
	Consider an arbitrary $u \in U(L)$. One has 
	\begin{align}
		& u \in  \integer^d, \nonumber \\
		& h(L-p, u) =  h(L,u) - \sprod{p}{u} \in \frac{1}{\ell} \integer - \frac{1}{m} \integer \subseteq \frac{1}{\ell m} \integer, \label{in:lattice} \\
		& h(L-p,u) \le h(L-L,u) = w(L,u) \le k, \label{not:large} \\
		& h(L-p,u) > 0. \nonumber 
	\end{align}
	Above \eqref{in:lattice} follows from (b) and (c), and \eqref{not:large} follows from (a).
	Thus, $\ell \cdot m \cdot h(L-p,u) \in \natur$ and $\ell \cdot m \cdot h(L-p,u) \le k \cdot \ell \cdot m$. We conclude that $(k \cdot \ell \cdot m)!$ is divisible by $\ell \cdot m \cdot h(L-p,u)$. By this, taking into account \eqref{norm:z:L-p}, we derive Part~I. Part~II follows from Part~I in view of \eqref{gauge:sup:f:eq}.
\end{proof}

Let $P$ be a nonempty line-free rational polyhedron in $\real^d$ and $L$ be a $d$-dimensional closed convex set in $\real^d$ such that $\rec(L)$ is a linear space. Below we introduce a matrix $\Rem_L(P)$, which we call the \term{remainder matrix} of $P$ with respect to $L$. Let $V$ be the set of all vertices and $E$ the set of all edges of $P$. For $v \in V$ and $e \in E$ with $v \in e$ let $u(v,e) \in \integer^d$ be a vector with $e \subseteq \setcond{v + t u(v,e)}{t \ge 0}$ and such that the greatest common divisor of the components of $u(v,e)$ is equal to one. Then 
\[
	\Rem_L(P) := \bigl(r_L(v,e)\bigr)_{v \in V, e \in E} \in \real^{V \times E},
\]
where the entries $r_L(v,e)$ are defined as follows. In the case $v \not\in e$ we let $r_L(v,e) := 0$. In the case $v \in e$ we define $r_L(v,e)$ by the following formula.
\begin{equation*}
	r_L(v,e) := 
	\begin{cases}
		0, & e \subseteq \intr(L), \\
		+\infty, & v \not\in \intr(L), \\
		\|u(v,e)\|_{L-v}, & v \in \intr(L), e \not\subseteq \intr(L).
	\end{cases}
\end{equation*}
The definition of the remainder matrix is motivated by \eqref{R:rep:less:precise}. In fact, if we keep the polyhedron $P$ fixed and vary $L$, then (in view of \eqref{R:rep:less:precise}) the `size' of the set $R_L(P)$ can be expressed in terms of the `size' of $\ext_1(P) \setminus \intr(L)$. The matrix $\Rem_L(P)$ is a quantitative representation for the 'size' of $\ext_1(P) \setminus \intr(L)$. More precisely, if we consider $v \in V$ and $e \in E$ with $v \in E$, then the entry $r_L(v,e)$ has the following interpretation. The largest possible value for $r_L(v,e)$ is $r_L(v,e)=+\infty$, which indicates that $v$ remains in $\ext_1(P) \setminus \intr(L)$. The smallest possible value $r_L(v,e)=0$ indicates that $v$ and the whole edge $e$ incident to $v$ is `cut off' by $L$ completely. In other cases one has $0< r_L(v,e) < +\infty$, which indicates that $v$ does not remain in $\ext_1(P) \setminus \intr(L)$ but a part of $e$ remains in $\ext_1(P) \setminus \intr(L)$, while the value $r_L(v,e)$ represents how large this part of $e$ is. See also Fig.~\ref{fig-illustration-rem-mx} for an illustration.

\begin{figtabular}{c}
\unitlength=1.2mm
\begin{picture}(115,32)
	\put(0,8){\includegraphics[width=110\unitlength]{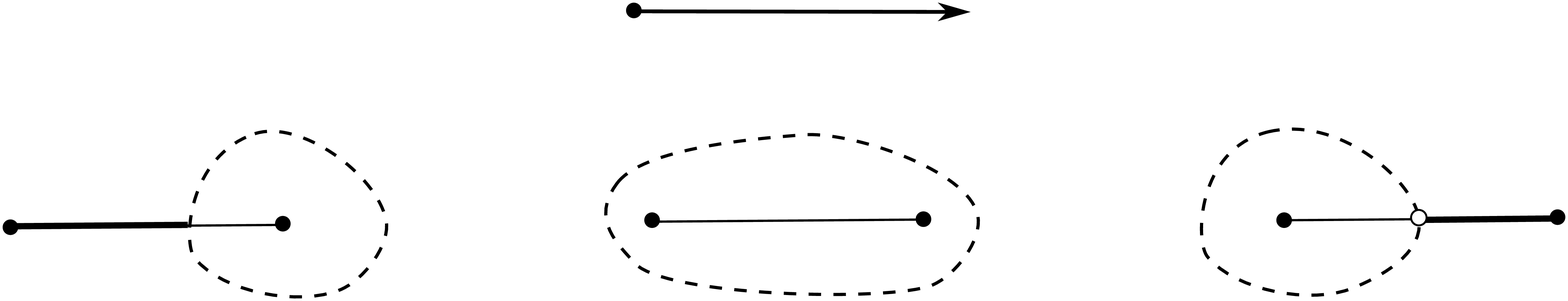}}
	\put(0,14){$v$}
	\put(10,11){$e$}
	\put(25,18){$L$}
	\put(2,0){$r_L(v,e)=+\infty$}
	\put(45,14){$v$}
	\put(55,11){$e$}
	\put(65,18){$L$}
	\put(43,0){$r_L(v,e)=0$}
	\put(90,14){$v$}
	\put(95,11){$e$}
	\put(97,18){$L$}
	\put(83,0){$r_L(v,e)=\|u(v,e)\|_{L-v}>0$}
	\put(65,25){$u(v,e)$}
\end{picture}
\\
\parbox[t]{0.90\textwidth}{\caption{\label{fig-illustration-rem-mx}Illustration to the definition of the remainder matrix for the case $d=2$. The figure depicts a vertex $v$ and an edge $e$ (of some polyhedron $P$) with $v \in e$ and three different choices of $L$. The dashed line is the boundary of $L$. The thick line is the set $e \setminus \intr(L)$. The white dot in the case $r_L(v,e)=\|u(v,e)\|_{L-v}$ (on the right) is the point $v+ \frac{u(v,e)}{\|u(v,e)\|_{L-v}}$ of the intersection of $e$ and $\bd(L)$.}}
\end{figtabular}

For matrices of a given size we define the partial order $\le$ in the same way as for elements of $\real^d$ (that is, by comparison of the respective components).

\begin{lemma} \label{how:much:remains}
	Let $P$ be a nonempty rational line-free polyhedron in $\real^d$ and let $L$ and $L'$ be $d$-dimensional closed convex sets in $\real^d$ such that $\rec(L)$ and $\rec(L')$ are linear spaces and the relation $\Rem_L(P) \le \Rem_{L'}(P)$ is fulfilled. Then $R_L(P) \subseteq R_{L'}(P)$.
\end{lemma}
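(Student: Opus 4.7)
Plan: My strategy is to apply Theorem~\ref{descr:R}.\ref{rep:R} to reduce the inclusion $R_L(P) \subseteq R_{L'}(P)$ to the pointwise claim $\ext_1(P) \setminus \intr(L) \subseteq R_{L'}(P)$, and then to verify that claim directly from the definition of the remainder matrix. The case $R_L(P) = \emptyset$ is trivial, so I assume $R_L(P) \ne \emptyset$. Since $\rec(L)$ and $\rec(L')$ are both linear spaces, Theorem~\ref{descr:R}.\ref{rep:R} gives
\[
	R_L(P) = \conv\bigl(\ext_1(P) \setminus \intr(L)\bigr) + \rec(P),
\]
and, once $R_{L'}(P)$ is known to be nonempty (which will follow from the pointwise argument), the same theorem yields $\rec(R_{L'}(P)) = \rec(P)$. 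Since $R_{L'}(P)$ is convex and invariant under addition of $\rec(P)$, the pointwise inclusion then implies the global one.

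I pick $x \in \ext_1(P) \setminus \intr(L)$; either $x$ is a vertex of $P$ or $x \in \relintr(e)$ for some edge $e$. For a vertex $x$, any incident edge $e$ gives $r_L(x,e) = +\infty$, whence $r_{L'}(x,e) = +\infty$, so $x \notin \intr(L')$ and $x \in R_{L'}(P)$. For $x \in \relintr(e)$, I write $x = v + t_0 u(v,e)$ with $v$ an endpoint of $e$ and $t_0 > 0$, and split into case \emph{(a)}, some endpoint $v$ of $e$ lies in $\intr(L) \cap \intr(L')$, and case \emph{(b)}, no endpoint does. In case (a), the condition $x \in e \setminus \intr(L)$ forces $e \not\subseteq \intr(L)$, so $r_L(v,e) = \|u(v,e)\|_{L-v}$ is positive and finite; likewise, the matrix inequality $r_{L'}(v,e) \ge r_L(v,e) > 0$ combined with $v \in \intr(L')$ forces $r_{L'}(v,e) = \|u(v,e)\|_{L'-v}$, and this inequality reads $\|u(v,e)\|_{L'-v} \ge \|u(v,e)\|_{L-v}$. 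Proposition~\ref{ray-prop} applied to $L - v$ together with $x \notin \intr(L)$ gives $t_0 \ge 1/\|u(v,e)\|_{L-v} \ge 1/\|u(v,e)\|_{L'-v}$, and a second application of Proposition~\ref{ray-prop} to $L'-v$ yields $x \notin \intr(L')$.

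In case (b), I first observe that every endpoint $w$ of $e$ satisfies $w \notin \intr(L')$: if $w \notin \intr(L)$, then $r_L(w,e) = +\infty$ propagates to $r_{L'}(w,e) = +\infty$; and if $w \in \intr(L) \setminus \intr(L')$ the claim is immediate. If $e$ is a segment, $x$ lies on the segment between its two endpoints, both of which are in $e \setminus \intr(L')$, so $x \in \conv(e \setminus \intr(L')) \subseteq R_{L'}(P)$. If $e$ is a ray with endpoint $v$, the crucial step is that $e \cap \intr(L')$ must be bounded: otherwise $u(v,e) \in \rec(L')$, and since $\rec(L')$ is a linear space, $-u(v,e) \in \rec(L')$ as well; translating an interior point $v + s u(v,e) \in \intr(L')$ by $-s \cdot u(v,e)$ (using that recession directions preserve the interior of a closed convex set) would force $v \in \intr(L')$, contradicting $v \notin \intr(L')$. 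Hence $e \setminus \intr(L')$ contains $v$ and $v + T u(v,e)$ for all sufficiently large $T$, and $x$ is a convex combination of two such points. This ray subcase is the main obstacle: the remainder matrix encodes information only at the finite endpoint of a ray, and the hypothesis that $\rec(L')$ is a linear space is precisely what rules out the edge escaping into $\intr(L')$ along its unbounded direction; every other case is a direct consequence of the gauge inequality read off the matrix together with Proposition~\ref{ray-prop}.
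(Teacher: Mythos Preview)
Your proof is correct and follows essentially the same route as the paper: both arguments invoke the $1$-skeleton representation \eqref{R:rep:less:precise} from Theorem~\ref{descr:R}.\ref{rep:R} and then reduce to an edge-by-edge comparison governed by the gauge values $\|u(v,e)\|_{L-v}$ via Proposition~\ref{ray-prop}. The only organizational difference is that the paper shows $R_L(e)\subseteq R_{L'}(e)$ for each edge $e$ by writing out $R_L(e)$ explicitly in every case, whereas you argue pointwise that each $x\in\ext_1(P)\setminus\intr(L)$ lands in $R_{L'}(P)$; the case splits and the use of the linearity of $\rec(L')$ to bound $e\cap\intr(L')$ in the ray case are the same in both.
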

\begin{proof}
In view of \eqref{R:rep:less:precise} one has 
\[
	R_L(P) = \conv \left(\bigcup_{e \in E} R_L(e) \right)+\rec(P),
\]
where $E$ denotes the set of all edges of $P$. The same equality also holds with $L'$ in place of $L$. Thus, it suffices to show that the relation $\Rem_L(P) \le \Rem_{L'}(P)$ implies $R_L(I) \subseteq R_{L'}(I)$ for every $e \in E$.

Consider an arbitrary edge $e \in E$.  If $e$ is a segment with endpoints $v$ and $w$, then the following implications hold. 
\begin{align}
	& v \not\in \intr(L), w \not\in \intr(L) & &\Longrightarrow & &R_L(e)=e, \nonumber \\ 
	& & & & & r_L(v,e)=r_L(w,e)=+\infty. \label{all:remains} \\
	& v \in \intr(L), w \in \intr(L) & &\Longrightarrow & &R_L(e)=\emptyset, \nonumber \\
	& & & & & r_L(v,e)=r_L(v,e)=0. \label{all:cut:off} \\
	& v \in \intr(L), w \not\in \intr(L) & &\Longrightarrow & & R_L(e) = \left[v + \frac{u(v,e)}{\|u(v,e)\|_{L-v}},w\right], \nonumber \\
	& & & & & r_L(v,e)= \|u(v,e)\|_{L-v}>0, \nonumber \\ 
	& & & & &  r_L(w,e) = +\infty. \label{part:cut:off}
\end{align}
The implications \eqref{all:remains} and \eqref{all:cut:off} have straightforward proofs. The implication \eqref{part:cut:off} follows from Proposition~\ref{ray-prop}. If $e$ is a ray and $v$ is the endpoint of $e$, then the following implications hold.
\begin{align}
	&v \not\in \intr(L) & &\Longrightarrow & &R_L(e)=e, \nonumber \\
	& & & & & r_L(v,e) = +\infty. \label{all:remains:rays} \\	
	&e \subseteq \intr(L) & &\Longrightarrow & &R_L(I)=\emptyset, \nonumber \\
	& & & & & r_L(v,e)=0. \label{nothing:remains:rays} \\
	&v \in\intr(L), e \not\subseteq \intr(L) & &\Longrightarrow &  &R_L(e)= \setcond{v+ \frac{u(v,e)}{\|u(v,e)\|_{L-v}} + t u(v,e)}{t \ge 0} \nonumber \\
	& & & & & r_L(v,e)=\|u(v,e)\|_{L-v} > 0. \label{part:remains:rays}
\end{align}

In \eqref{all:remains:rays} the equality $R_L(e)=e$ follows by applying  Theorem~\ref{descr:R} to the one-dimensional polyhedron $e$.  In fact, by Theorem~\ref{descr:R}, the recession cones of $R_L(e)$ and $e$ coincide and $v$ is a vertex of $R_L(e)$. Hence $R_L(e)=e$. The implication \eqref{nothing:remains:rays} is trivial. The implication \eqref{part:remains:rays} follows directly from Proposition~\ref{ray-prop}.

Let $e$ be an arbitrary edge of $P$. We shall use the implication \eqref{all:remains}--\eqref{part:remains:rays} given above to show $R_L(e) \subseteq R_{L'}(e)$. Consider the case that $e$ is a segment and let $v$ and $w$ be the endpoints of $e$. If both $r_L(v,e)$ and $r_{L'}(w,e)$ are zero or both $r_L(v,e)$ and $r_{L'}(w,e)$ are $+\infty$ one has $r_L(e)=\emptyset$ or $r_{L'}(e)=e$ and the inclusion $r_L(e) \subseteq r_{L'}(e)$ is fulfilled. Otherwise, one of the two values $r_L(v,e), r_L(w,e)$ is finite and strictly positive and the other one is $+\infty$ and the same is also valid for $L'$ in place of $L$. Without loss of generality let $r_L(w,e) = +\infty$. Then, by $r_L(w,e) \le r_{L'}(w,e)$ one has $r_{L'}(w,e) = + \infty$ and hence $r_L(v,e)$ and $r_{L'}(v,e)$ are positive and finite. It follows $\|u(v,e)\|_{L-v} = r_L(v,e) \le r_{L'}(v,e) = \|u(v,e)\|_{L'-v}$. Representing $R_L(e)$ and $R_{L'}(e)$ as in the implication \eqref{part:cut:off} we deduce $R_L(e) \subseteq R_{L'}(e)$.

The proof of $R_L(e) \subseteq R_{L'}(e)$ for the case that $e$ is a ray is analogous and relies on the implications \eqref{all:remains:rays}--\eqref{nothing:remains:rays}.
\end{proof}

\begin{proof}[Proof of Theorem~\ref{polyhedrality:of:closures}]
 Without loss of generality we assume $P \ne \emptyset$. If for some $L \in \cL$ one has $\lineal(P) \not\subseteq \lineal(L)$, then by Lemma~\ref{no:reduction:case} one has $R_L(P)=P$. Thus, without loss of generality we can assume that $\lineal(P) \subseteq \lineal(L)$ for every $L \in \cL$. It suffices to consider the case that $P$ is line-free. In fact, if $P$ is not line-free, then applying an appropriate transformation from $\Aff(\integer^d)$ to $P$ and to all elements of $\cL$ we can assume that $P=P' \times \real^n$, where $P'$ is a line-free rational polyhedron. In view of Lemma~\ref{projection:lemma} we can apply a canonical projection to $P$ and to all elements of $\cL$ passing to the case of a line-free polyhedron $P$. Thus, assume that $P$ is line-free.

In view of Lemma~\ref{integrality:lemma}, for every $L \in \cL$ each component of $(k \cdot \ell \cdot m)! \cdot \Rem_L(P)$ is either a nonnegative integer or $+\infty$. Let us apply the Gordan-Dickson lemma to the set of matrices $(k \cdot \ell \cdot m)! \cdot \Rem_L(P)$, with $L \in \cL$, which are partially ordered by $\le$. We conclude that one can choose a finite subclass $\cL'$ of $\cL$ such that for every $L \in \cL$ there exists $L'\in \cL'$ with $\Rem_{L'}(P) \le \Rem_L(P)$. Above we applied the Gordan-Dickson lemma in the case  where some of the entries can be $+\infty$. Note that the standard form of the Gordan-Dickson lemma (Lemma~\ref{gordan:dickson}), in which the entries are assumed to belong to $\natur$, can be easily extended to this slightly more general case. By Lemma~\ref{how:much:remains}, for $L'\in \cL'$ and $L \in \cL$ the relation $\Rem_{L'}(P) \le \Rem_L(P)$ implies $R_{L'}(P) \subseteq R_L(P)$. This shows Part~I. Part~II is a straightforward consequence of Part~I.
Assumption (a) implies that for every $L \in \cL$ the set $\rec(L)$ is a linear space. Hence by Theorem~\ref{polyhedrality:characterization}, for every $L \in \cL$, the set $R_{L}(P)$ is a polyhedron. Using Theorem~\ref{descr:R} we see that the polyhedra $R_L(P)$ with $L \in \cL$ are rational. In fact, taking into account Part~I of Theorem~\ref{descr:R} and the rationality of $L$ and $P$, we have $\ext(\bigl(R_L(P)\bigr) \subseteq \rational^d$. By Part~II of Theorem~\ref{descr:R}, we have $R_L(P)= \conv\bigl(\ext(R_L(P)\bigr)+\rec(P)$. Since $\rec(P)$ is a rational polyhedron, we deduce that $R_L(P)$ is a Minkowski sum of two rational polyhera. It follows that $R_L(P)$ is a rational polyhedron. For the finite set $\cL'$ defined above we have $R_\cL(P)= \bigcap_{L \in \cL'} R_L(P)$. Thus, $R_\cL(P)$ is intersection of finitely many rational polyhedra. It follows that $R_\cL(P)$ is a rational polyhedron, that is, Part~III is fulfilled.
\end{proof}

\begin{remark} \header{Non-redundancy of the assumptions (a) and (b) in Theorem~\ref{polyhedrality:of:closures}}. Below we give examples which show that neither assumption (a) nor assumption (b) alone are enough for deriving the assertion of Theorem~\ref{polyhedrality:of:closures}.

Let us first give an example showing that assumption (a) alone is not enough. Let $K$ be the intersection of $[0,1]^2$ with a circular disk of radius $1/2$ centered at the origin (thus, $K$ is a circular sector). For rational values $0 < t < 1/2$ and $0 < s < 1/2$ such that the line through $(t,1/2)^\top$ and $(1/2,s)^\top$ does not meet $K$ we define $L_{s,t}:=[t,1] \times [s,1]^d$. Let $\cL$ be the set of all $L_{s,t}$ as described above (see also Fig.~\ref{fig-ex-b-necessary}). Then $\cL$ satisfies the assumption (a) of Theorem~\ref{polyhedrality:of:closures} since $\maxfw(L) \le 1$ for every $L \in \cL$. By construction $R_\cL([0,1/2]^2)=K$.  Thus, $[0,1/2]^2$ is a polyhedron and $R_\cL([0,1/2]^2)$ is not a polyhedron.

\begin{figtabular}{cc}
\unitlength=1.4mm
\begin{picture}(43,35)
	\put(6,-2){\includegraphics[width=35\unitlength]{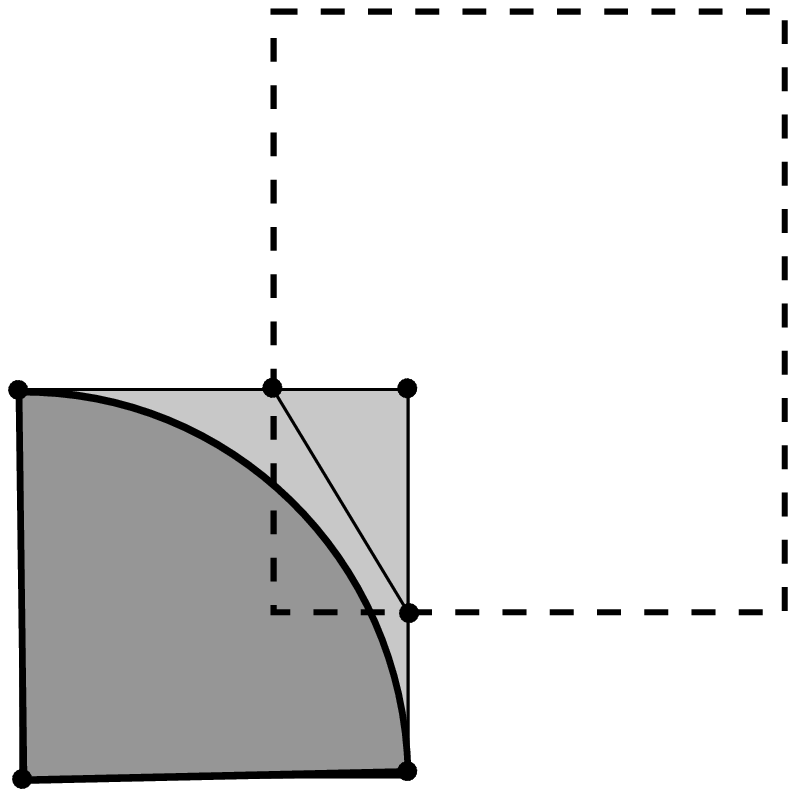}}
	\put(8,17){$(t,1/2)^\top$}
	\put(25,3){$(1/2,s)^\top$}
	\put(12,5){$K$}
	\put(42,20){$L_{s,t}$}
	\put(41,33){$(1,1)^\top$}
	\put(4,-3){$o$}
	\put(21,17){$(1/2,1/2)^\top$}
\end{picture}
&
\unitlength=1mm
\begin{picture}(50,40)
	\put(6,-3){\includegraphics[width=35\unitlength]{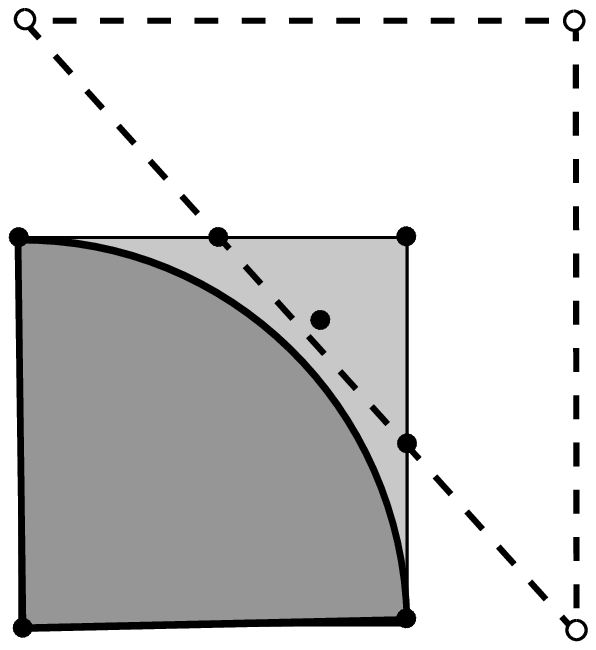}}
	\put(12,6){$K$}
	\put(42,24){$L_{p,q}$}
	\put(26,17){$p$}
	\put(5,36){$q$}
	\put(41,-2){$r$}
	\put(41,36){$(r_1,q_2)^\top$}
	\put(4,-4){$o$}
\end{picture}

\\
\parbox[t]{0.45\textwidth}{\caption{\label{fig-ex-b-necessary} Example showing that in Theorem~\ref{polyhedrality:of:closures} the assumption (a) alone is not enough for deriving the assertion. The dashed line is the boundary of $L_{s,t}$.}}
&
\parbox[t]{0.45\textwidth}{\caption{\label{fig-ex-a-necessary} Example showing that in Theorem~\ref{polyhedrality:of:closures} the assumption (b) alone is not enough for deriving the assertion. The dashed line is the boundary of $L_{p,q}$.}}
\end{figtabular}

Let us give an example showing that assumption (b) alone is not enough. Again, let $d=2$ and let us use $K$ from the previous example. For every rational point $p \in [0,1/2]^d \setminus K$ there exist points $q=(q_1,q_2)^\top \in \integer^2$ and $r=(r_1,r_2)^\top \in \integer^2$ such that $q_1<r_1$, $K$ and $p$ lie in different open halfspaces defined by the line $\aff(\{q,r\})$ and, furthermore, both $q$ and $r$ do not belong to $[0,1/2]^2$. For $q$ and $r$ as above we define the triangle $L_{q,r}$ with vertices $q$, $r$ and $(r_1,q_2)^\top$. Let $\cL$ be the set of all $L_{q,r}$ as described above (see also Fig.~\ref{fig-ex-a-necessary}). Then $\cL$ satisfies the assumption (b) of Theorem~\ref{polyhedrality:of:closures} (with $\ell=1$) and $R_\cL([0,1/2]^2) = K$. Thus, also for this example $[0,1/2]^2$ is a polyhedron and $R_\cL([0,1/2]^2)$ is not a polyhedron. \eop
\end{remark}

\begin{proof}[Proof of Corollary~\ref{polyhedrality:of:closures:cor}]
In view of Proposition~\ref{max-lat-free-prop}.\ref{max-lat-free-description}, condition (b) of Corollary~\ref{polyhedrality:of:closures:cor} implies condition (b) of Theorem~\ref{polyhedrality:of:closures} with $\ell=1$.  Thus, the assumptions of Theorem~\ref{polyhedrality:of:closures} are fulfilled and the assertion follows.
\end{proof}

The following proposition will be used in the proof of Theorem~\ref{max-fac-width-of-max-lat-free}.

\begin{proposition} \label{vol:versus:maxfw}
	Let $P$ be a $d$-dimensional rational polytope in $\real^d$. Then
	\begin{equation} \label{vol:vs:maxfw:ineq}
		\vol(P) \le \maxfw(P)^d.
	\end{equation}
\end{proposition}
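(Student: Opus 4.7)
The plan is to bound $P$ by a parallelepiped whose edges are controlled by the widths in $d$ linearly independent facet-normal directions, and then estimate the volume of this parallelepiped using the fact that an integer matrix with nonzero determinant has $|\det| \ge 1$.

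First I would note that, since $P$ is bounded, $\rec(P) = \{o\}$ is a linear space, so $\maxfw(P)$ is finite and equal to $\max_{u \in U(P)} w(P,u)$. Next I would argue that $U(P)$ spans $\real^d$ as a vector space: if all vectors in $U(P)$ lay in a hyperplane $H$, then any $v \perp H$ would satisfy $\sprod{u}{v} \le 0$ for all $u \in U(P)$ (after sign flip we could choose either $v$ or $-v$), making $v$ a nonzero recession direction of $P$ by the representation \eqref{P:repr:U(P)}, contradicting boundedness. Hence one may choose $u_1,\ldots,u_d \in U(P)$ that are linearly independent.

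Set $\alpha_i := h(P,u_i)$ and $\beta_i := -h(P,-u_i) = \inf_{x \in P}\sprod{u_i}{x}$, so that $\alpha_i - \beta_i = w(P,u_i) \le \maxfw(P)$. By \eqref{P:repr:U(P)}, $P$ is contained in the parallelepiped
\[
Q := \setcond{x \in \real^d}{\beta_i \le \sprod{u_i}{x} \le \alpha_i \quad \forall \ i=1,\ldots,d}.
\]
Let $M$ be the $d \times d$ matrix whose $i$-th row is $u_i^\top$. The linear change of variables $y = Mx$ maps $Q$ bijectively onto the axis-aligned box $\prod_{i=1}^d [\beta_i,\alpha_i]$, whence
\[
\vol(Q) = \frac{\prod_{i=1}^d (\alpha_i - \beta_i)}{|\det M|}.
\]
Since $u_1,\ldots,u_d$ are linearly independent vectors in $\integer^d$, $\det M$ is a nonzero integer, so $|\det M| \ge 1$. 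Combining,
\[
\vol(P) \le \vol(Q) = \frac{\prod_{i=1}^d w(P,u_i)}{|\det M|} \le \prod_{i=1}^d \maxfw(P) = \maxfw(P)^d,
\]
which is the desired inequality \eqref{vol:vs:maxfw:ineq}.

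The only step requiring a small argument is the existence of $d$ linearly independent facet normals in $U(P)$, and this follows routinely from the boundedness of $P$ via the recession-cone description of $P$ in terms of $U(P)$; everything else reduces to the standard volume formula for an affinely transformed box together with integrality of $\det M$.
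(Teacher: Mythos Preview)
Your proof is correct and follows essentially the same approach as the paper's own argument: enclose $P$ in the parallelepiped determined by $d$ linearly independent integer facet normals $u_1,\dots,u_d \in U(P)$, compute its volume via the integer matrix $M$ with rows $u_i^\top$, and use $|\det M|\ge 1$. The only cosmetic differences are that the paper sets each slab width equal to $k=\maxfw(P)$ rather than the exact widths $w(P,u_i)$, and it asserts (without the explicit recession-cone argument you supply) the existence of $d$ linearly independent vectors in $U(P)$.
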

\begin{proof}
	Consider $u \in U(P)$ with $\maxfw(P)=w(P,u).$ Let $k:=\maxfw(P)$. Then there exist linearly independent $u_1,\ldots,u_d \in U(P)$ such that $u_1=u$. For the parallelotope
	\[
		P':=\setcond{x \in \real^d}{ h(P,u_i) - k \le \sprod{x}{u_i} \le h(P,u_i) \quad \forall \ i=1,\ldots,d}
	\]
	one has $\maxfw(P') = \maxfw(P)$ and $P \subseteq P'$. Since $u_1,\ldots,u_d$ are linearly independent there exists $a \in \rational^d$ such that $\sprod{a}{u_i} = k-h(P,u_i)$ for every $i \in \{1,\ldots,d\}$. Then
	\[
		a + P' := \setcond{x \in \real^d}{ 0 \le \sprod{x}{u_i} \le k \quad \forall \ i=1,\ldots,d}.
	\]
	Let $B$ be the integer matrix with rows $u_1^\top,\ldots,u_d^\top$ (in this order). Then $a + P'$ can be given by
	\[
		a + P'= \setcond{x \in \real^d}{ B x \in [0,k]^d} = B^{-1}([0,k]^d).
	\]
	Thus 
	\[
		\vol(P) \le \vol(P') = \frac{1}{|\det B|} k^d \le k^d.
	\]
\end{proof}

\begin{proof}[Proof of Theorem~\ref{max-fac-width-of-max-lat-free}]
	The implication (ii) $\Rightarrow$ (i) follows from the invariance of the max-facet-width with respect to transformations from $\Aff(\integer^d)$. Let us show (i) $\Rightarrow$ (ii). We assume (i).
	We represent $\cL$ as the union $\bigcup_{i=0}^{d-1} \setcond{L \in \cL}{\dim\bigl(\rec(L)\bigr)=i}$. Thus, it suffices to show the finiteness of $\setcond{L \in \cL}{\dim \bigl(\rec(L))\bigr) = i} / \Aff(\integer^d)$ for each $i \in \{0,\ldots,d-1\}$.  In view of Proposition~\ref{max-lat-free-prop}.\ref{unbounded-max-lat-free} it suffices to consider the case $i=0$ only. That is, we restrict considerations to the case that for every $L \in \cL$ the set $L$ is a $d$-dimensional polytope. Let $L \in \cL$. By Proposition~\ref{max-lat-free-prop}.\ref{max-lat-free-description} the relative interior of each facet of the $d$-dimensional polytope $L$ contains an integral point; thus, the convex hull of such integral points is $d$-dimensional. It follows that the set $\cP:= \setcond{\conv(L \cap \integer^d)}{L \in \cL}$ consists of $d$-dimensional integral polytopes. By (i) and Proposition~\ref{vol:versus:maxfw} the volume of the polytopes $L \in \cL^d$ is bounded from above by $k^d$. Hence also the volume of the polytopes $P \in \cP$ is bounded by $k^d$.  Hence, by Theorem~\ref{finiteness:and:volume:boundedness}, $\cP/ \Aff(\integer^d)$ is finite. We fix $n \in \natur$ and polytopes $P_1,\ldots,P_n$ such that \[\cP / \Aff(\integer^d) = \{P_1,\ldots,P_n\}/ \Aff(\integer^d).\] For each $i \in \{1,\ldots,n\}$ we choose $q_i \in \intr(P_i) \cap \rational^d$. Furthermore we fix $m \in \natur$ such that $m q_i \in \integer^d$ for every $i \in \{1,\ldots,n\}$. Consider an arbitrary $L \in \cL$. By construction, there exists $A \in \Aff(\integer^d)$ and $i \in \{1,\ldots,n\}$ such that for $L':= A(L)$ one has $\conv(L'\cap \integer^d) = P_i$. The inclusion $P_i \subseteq L'$ implies
\[
	(P_i - q_i)^\circ \supseteq (L'-q_i)^\circ,
\]
where (in view of \eqref{polar:over:hyp:rep} and \eqref{P:repr:U(P)}) the polytope $(L'- q_i)^\circ$ can be given by 
\begin{equation} \label{L':rel}
	(L'- q_i)^\circ = \conv(X)
\end{equation}
with
\[
	X:= \setcond{\frac{u}{h(L'-q_i,u)}} {u \in U(L')}.
\]
For every $u \in U(L')$ one has
\begin{align*}
	0 & < h(L'-q_i,u) = h(L',u) + \sprod{-q_i}{u} \\ & \le h(L',u) + h(-L',u) = h(L'-L',u) = w(L',u) \le \maxfw(L') \le k.
\end{align*}
Hence $m (h(L'-q_i,u) )$ is a natural number not larger than $k m$. It follows that $(k \cdot m) !$ is divisible by $m (h(L',u) - \sprod{q_i}{u})$ and we deduce 
\[
	\frac{u}{h(L'-q_i,u)} \in \frac{1}{(k \cdot m)!} \integer^d.
\]
The latter implies $X \subseteq \frac{1}{(k \cdot m)!} \integer^d$. From $L=A(L')$ and \eqref{L':rel} we derive $L = A^{-1}\bigl(q_i+ \conv(X)^\circ\bigr)$. Thus, we have shown that for every $L \in \cL$ there exists $i \in \{1,\ldots,n\}$, a set $X \subseteq (P_i-q_i)^\circ \cap \left(\frac{1}{(k \cdot m) !} \integer^d \right)$ and a transformation $A \in \Aff(\integer^d)$ such that $L = A^{-1}(q_i + \conv(X)^\circ)$. Since, for every $i \in \{1,\ldots,n\}$, the set $(P_i-q_i)^\circ \cap \left(\frac{1}{(k \cdot m)!} \integer^d \right)$ containing $X$ is finite, there exist only finitely many choices of sets $X$ as above. This yields (ii) and finishes the proof of (i)  $\Rightarrow$ (ii).
\end{proof}

\begin{lemma} \label{boundedness:lemma}
	Let $(K_i)_{i \in \natur}$ be a decreasing sequence of closed convex sets in $\real^d$ (that is, $K_{i+1} \subseteq K_i$ for every $i \in \natur$). Let $K:= \bigcap_{i=1}^{+\infty} K_i$ be bounded and nonempty. Then there exists $j \in \natur$ such that, for every $i \in \natur$ with $i \ge j$, the set $K_i$ is bounded.
\end{lemma}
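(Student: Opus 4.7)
The plan is to argue by contradiction via a recession-cone/compactness argument. Suppose no such $j$ exists. Since the sequence $(K_i)_{i \in \natur}$ is decreasing, if some $K_j$ were bounded then so would be all $K_i$ with $i \ge j$; hence the failure of the conclusion forces \emph{every} $K_i$ to be unbounded. In particular, for every $i \in \natur$ the recession cone $\rec(K_i)$ contains a nonzero vector. Moreover, from $K_{i+1} \subseteq K_i$ and the very definition \eqref{rec:K:def} one checks at once that $\rec(K_{i+1}) \subseteq \rec(K_i)$.

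Next, I extract a common recession direction using compactness. Let $S := \setcond{u \in \real^d}{\sprod{u}{u} = 1}$. The sequence $\bigl(\rec(K_i) \cap S\bigr)_{i \in \natur}$ is a decreasing sequence of nonempty (by the previous paragraph) closed subsets of the compact space $S$. By the finite intersection property in $S$, there exists
\[
u \in \bigcap_{i \in \natur}\bigl(\rec(K_i) \cap S\bigr).
\]

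It remains to observe that such a $u$ must lie in $\rec(K)$, which will contradict boundedness. Indeed, fix any $x \in K$ (possible since $K$ is nonempty). Then $x \in K_i$ for every $i \in \natur$, and since $u \in \rec(K_i)$, we have $x + t u \in K_i$ for every $i \in \natur$ and every $t \ge 0$. Intersecting over $i$ yields $x + t u \in K$ for every $t \ge 0$, so $u \in \rec(K)$. But $K$ is bounded and nonempty, hence $\rec(K) = \{o\}$, contradicting $u \in S$. This completes the proof.

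The whole argument is short and the only conceptual step is the idea of passing to recession cones and using compactness of the unit sphere to produce a common direction of unboundedness; everything else is a direct verification from the definitions.
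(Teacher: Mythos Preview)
Your proof is correct and follows essentially the same approach as the paper: argue by contradiction, produce a nonzero recession direction common to all $K_i$ via compactness of the unit sphere, and conclude $u \in \rec(K)$, contradicting boundedness. The only cosmetic difference is that you exploit the monotonicity $\rec(K_{i+1}) \subseteq \rec(K_i)$ and apply the finite intersection property to the nested closed sets $\rec(K_i) \cap S$, whereas the paper picks unit vectors $u_i \in \rec(K_i)$, passes to a convergent subsequence $u_{n_j} \to u$, and then uses closedness of each $K_i$ to show $p + t u \in K_i$ in the limit; both are standard realizations of the same compactness idea.
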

\begin{proof}
	We assume the contrary, that is, $(K_i)_{i \in \natur}$ contains infinitely many unbounded sets. Since $(K_i)_{i \in \natur}$ is decreasing it follows that, for every $i \in \natur$, the set $K_i$ is unbounded. For each $i \in \natur$ we choose a unit vector $u_i$ in $\rec(K_i).$ There exists an infinite subsequence $(u_{n_i})_{i=1}^{+\infty}$ of the sequence $(u_i)_{i=1}^{+\infty}$ such that $u_{n_i}$ converges to some unit vector $u \in \real^d$, as $i \rightarrow +\infty$. We fix an arbitrary $p \in K$. Consider arbitrary $i \in\natur$ and $t \ge 0$. By construction, one has $p+ t u_{n_j} \in K_i$ for every $j \in \natur$ with $n_j \ge i$. Taking limit, as $j \rightarrow +\infty$, we obtain $p+ t u \in K_i$. Since $i$ is arbitrary we deduce $p+ t u \in K$. Since $t \ge 0$ is arbitrary, it follows that $u \in \rec(K)$. Thus $K$ is unbounded, a contradiction to the assumptions.
\end{proof}

\begin{lemma}\label{rec:MI:hull}
	Let $\bM$ be a mixed-integer space given by \eqref{MI:space} and let $P \subseteq \real^d$ be a rational polyhedron with $P \cap \bM \ne \emptyset$. Then $\conv(P \cap \bM)$ is a rational polyhedron and one has
	\begin{align*}
		\rec\bigl(\conv(P \cap \bM)\bigr)  = \rec(P).
	\end{align*}
\end{lemma}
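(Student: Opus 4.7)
The plan is to establish the two assertions in turn: first the equality of recession cones by a direct argument, and then the rational polyhedrality, which is essentially Meyer's classical theorem.

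Since $P$ is a rational polyhedron, its recession cone $\rec(P)$ is a rational polyhedral cone; I would fix rational generators $g_1, \ldots, g_s$ of $\rec(P)$. After multiplying each by a suitable positive integer, I may assume that the first $m$ components of each $g_i$ are integers, so that $g_i \in \rec(P) \cap \bM$. The inclusion $\rec(\conv(P \cap \bM)) \subseteq \rec(P)$ is trivial, because $\conv(P \cap \bM) \subseteq P$. For the reverse inclusion, I would first note that, for every $p \in P \cap \bM$ and every tuple of nonnegative integers $n_1, \ldots, n_s$, the point $p + \sum_i n_i g_i$ lies in $P \cap \bM$: it lies in $P$ because each $g_i \in \rec(P)$, and has integer first $m$ components because these are integer for $p$ and for each $g_i$. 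Given an arbitrary $r = \sum_i \lambda_i g_i \in \rec(P)$ with $\lambda_i \ge 0$, I would pick an integer $N > \max_i \lambda_i$ and observe that $p + r$ lies in the zonotope $p + \{\sum_i \mu_i N g_i : \mu_i \in [0,1]\}$, which is the convex hull of the $2^s$ points $p + \sum_{i \in S} N g_i$, $S \subseteq \{1, \ldots, s\}$, all of which lie in $P \cap \bM$. Hence $p + r \in \conv(P \cap \bM)$; extending by convexity gives $p_0 + r \in \conv(P \cap \bM)$ for every $p_0 \in \conv(P \cap \bM)$, yielding $\rec(P) \subseteq \rec(\conv(P \cap \bM))$.

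For the rational polyhedrality of $\conv(P \cap \bM)$, I would reduce to the line-free case via the same $\Aff(\integer^d)$-transformation and projection trick employed in the proof of Theorem~\ref{polyhedrality:of:closures}, so that $P = \conv(\ext(P)) + \rec(P)$ with $\ext(P)$ finite. Using the fundamental parallelepiped $T := \{\sum_i \lambda_i g_i : 0 \le \lambda_i \le 1\}$ of the chosen generators, every $x \in P \cap \bM$ decomposes as $x = x_0 + \sum_i n_i g_i$ with $n_i \in \natur \cup \{0\}$ and $x_0 \in (\conv(\ext(P)) + T) \cap \bM$ (again using that the first $m$ components of each $g_i$ are integer). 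The principal technical obstacle is to show that $Q := \conv\bigl((\conv(\ext(P)) + T) \cap \bM\bigr)$ is a rational polytope; this reduces in turn to verifying that $(\conv(\ext(P)) + T) \cap \bM$ is a finite union of rational polytopes, which follows because the bounded rational set $\conv(\ext(P)) + T$ has a projection onto $\real^m$ containing only finitely many points of $\integer^m$, and for each such lattice point $z$ the fiber $\{x \in \conv(\ext(P)) + T : (x_1, \ldots, x_m) = z\}$ is a rational polytope. Combined with the recession cone identity just established, this yields $\conv(P \cap \bM) = Q + \rec(P)$, a Minkowski sum of a rational polytope and a rational polyhedral cone, hence a rational polyhedron.
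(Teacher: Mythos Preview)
Your argument for the recession cone identity is correct and follows the same idea as the paper: pick generators of $\rec(P)$ whose first $m$ components are integers, observe that adding nonnegative integer multiples of them to a point of $P\cap\bM$ stays in $P\cap\bM$, and then pass to arbitrary nonnegative coefficients by convexity. The paper does this in one line (taking $a_i\in\integer^d$ and using $\bM+\integer^d=\bM$), whereas your zonotope argument makes the convexity step more explicit; the content is the same.

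For the polyhedrality assertion the paper simply cites the classical result (Schrijver, \S16.7), while you sketch a proof along the lines of Meyer's theorem. Your sketch is essentially right, but the reduction to the line-free case is not justified as written: the $\Aff(\integer^d)$--and--projection trick from the proof of Theorem~\ref{polyhedrality:of:closures} operates only on $P$ and on the class $\cL$, and an arbitrary unimodular transformation need not map $\bM=\integer^m\times\real^n$ to itself, so after the transformation you are no longer intersecting with a mixed-integer space of the required form. Fortunately the reduction is unnecessary: by the Minkowski--Weyl theorem every rational polyhedron (line-free or not) admits a decomposition $P=\conv(V)+\rec(P)$ with $V\subseteq\rational^d$ finite, and your parallelepiped argument goes through verbatim with $\conv(V)$ in place of $\conv(\ext(P))$. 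With that adjustment, the final identity $\conv(P\cap\bM)=Q+\rec(P)$ is obtained exactly as you describe, using the recession cone equality already established for the inclusion $Q+\rec(P)\subseteq\conv(P\cap\bM)$.
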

\begin{proof}
	It is known that $\conv(P\cap \bM)$ is a rational polyhedron (see, for example, \cite[\S\,16.7]{MR874114}). The inclusion $\rec\bigl(\conv(P \cap \bM)) \subseteq \rec(K)$ is trivial. Let us show the reverse inclusion. The cone $\rec(P)$ is an integral polyhedron. We represent $\rec(P)$ by $\rec(P) = \cone \bigl( \{a_1,\ldots,a_k\}\bigr)$, where $k \in \natur$ and $a_1,\ldots,a_k \in \integer^d$. Since $\bM+\integer^d = \bM$, for all $t \in \natur$, $x \in K \cap \bM$ and $i \in \{1,\ldots,k\}$ one has $x + t a_i \in K \cap \bM$. Since $K \cap \bM \ne \emptyset$, the latter shows $a_i \in \rec \bigl(\conv(K \cap \bM)\bigr)$ for every $i \in \{1,\ldots,k\}$ and yields $\rec(K) \subseteq \rec \bigl(\conv(K \cap \bM)\bigr)$. This shows the assertion.
\end{proof}

\begin{proof}[Proof of Theorem~\ref{convergence:thm}]
 	Let $K$ be an arbitrary line-free closed convex set in $\real^d$ and let $R:= \bigcap_{i=0}^{+\infty} R_\cL^i(K)$. In view of Theorem~\ref{descr:R}.\ref{rep:R}, $R_\cL^i(K)$ is closed for every $i \ge 0$. Hence, also $R$ is closed. We show
	\begin{equation} \label{ext:R:subset:M}
		\ext(R) \subseteq M.
	\end{equation}
	Assume \eqref{ext:R:subset:M} is not fulfilled. Then one can find $p \in \ext(R)$ with $p \not\in M$. Since $p \not\in M$, we can choose $L \in \cL$ with $p \in \intr(L)$. We also choose a bounded open set $U$ satisfying $x \in U \subseteq \intr(L)$. By the cap lemma (Lemma~\ref{cap:lemma}) there exists a hyperplane $H$ such that $p$ and $R \setminus U$ lie in different open halfspaces defined by $H$. Let $H^-$ and $H^+$ be the closed halfspaces defined by $H$ and satisfying $R \setminus U \subseteq \intr(H^+)$ and $p \in \intr(H^-)$. By the choice of $H$, we have $R \cap H^- \subseteq U$ and thus, $R \cap H^-$ is bounded. Since $R \cap H^- = \bigcap_{i=0}^{+\infty} \bigl(R_\cL^i(K) \cap H^- \bigr)$, applying Lemma~\ref{boundedness:lemma} we obtain the existence of $i \in \natur$ such that for every $j \in \natur$ with $j \ge i$ the set $R_\cL^j(K) \cap H^-$ is bounded. By Lemma~\ref{decr:convergence}, the sequence $\bigl(R_\cL^j(K) \cap H^-\bigr)_{j=i}^{+\infty}$ converges to $R \cap H^-$ in the Hausdorff distance. We recall that for $\rho>0$ and $x \in \real^d$ by $B(x,\rho)$ we denote the closed ball of radius $\rho$ centered at $x$. We can choose $\rho>0$ such that 
	\begin{equation} \label{extension:of:cap:lies:in:U}
		(R \cap H^-)+ B(o,\rho) \subseteq U.
	\end{equation} 
	In view of the convergence of $R_\cL^j(K) \cap H^-$ to $R \cap H^-$ and by the definition of the Hausdorff distance, there exists $i \in \natur$ such that 
	\begin{equation} \label{seq:caps:contained}
		R_\cL^i(K) \cap H^- \subseteq ( R \cap H^-) + B(o,\rho).
	\end{equation}
	With $i$ as above, using \eqref{extension:of:cap:lies:in:U} and \eqref{seq:caps:contained} we derive
	\begin{align*}
		R_\cL^i(K) \setminus \intr(L) & \subseteq R_\cL^i(K) \setminus U = \Bigl(\bigl(R_\cL^i(K) \cap H^+\bigr) \setminus U \Bigr)\cup \Bigl(\bigl(R_\cL^i(K) \cap H^-\bigr) \setminus U\bigr) \\
		& \subseteq H^+ \cup \Bigl(\bigl(R_\cL^i(K) \cap H^-\bigr) \setminus U\bigr) \subseteq H^+ \cup \Bigl(\bigl((R \cap H^-) + B(o,\rho)\bigr) \setminus U\bigr) = H^+.
	\end{align*}
	Thus, $R_\cL^i(K) \setminus \intr(L) \subseteq H^+$. It follows
	\begin{align*}
		R \subseteq R_\cL^{i+1}(K) = R_\cL\bigl(R_\cL^i(K)\bigr) \subseteq R_L\bigl(R_\cL^i(K)\bigr) = \conv \left( R_\cL^i(K) \setminus \intr(L) \right) \subseteq \conv(H^+) = H^+.
	\end{align*}
	We derived the inclusion $R \subseteq H^+$ which contradicts $p \in R \cap \intr(H^-)$. This shows \eqref{ext:R:subset:M}. 

	Let us verify Part~I. We notice that if $K \cap M = \emptyset$, then Part~I is fulfilled, since in this case one has $R=\emptyset$. This can be shown by contradiction. Assume that $K \cap M = \emptyset$ and $R \ne \emptyset$. Then $\ext(R) \ne \emptyset$. By \eqref{ext:R:subset:M} we get $\ext(R) \subseteq M$. Thus, $K \cap M \ne \emptyset$, a contradiction.  Therefore for the rest of the proof we assume $K \cap M \ne \emptyset$.  For every integer $i \ge 0$ we have:
	\begin{equation}
		\text{$R_\cL^i(K)$ is a closed set and $\rec\bigl(R_\cL^i(K)\bigr) = \rec(K)$.} \label{all:closures:closed}
	\end{equation}
	Condition \eqref{all:closures:closed} can be verified for every integer $i \ge 0$ by induction on $i$. For $i=0$ the condition trivially holds. If the condition holds for some integer $i \ge 0$, then (since $\rec(L)$ is a linear space) we can apply Theorem~\ref{descr:R}.\ref{rep:R}. It follows that $R_L\bigl(R_\cL^i(K)\bigr)$ is closed and one has $\rec\bigl(R_L(R_\cL^i(K))\bigr) = \rec\bigl(R_\cL^i(K)\bigr) = \rec(K)$ for every $L \in \cL$. Consequently $R_\cL^{i+1}(K)$ is closed and one has $\rec(R_\cL^{i+1}(K))=\rec(K)$. This yields \eqref{all:closures:closed} for every integer $i \ge 0$. From \eqref{all:closures:closed} and the definition of $R$ we deduce that $R$ is closed and one has $\rec(R) = \rec(K)$. Using the above observations and the inclusion $K \cap M \subseteq R$ we derive $\conv(K \cap M) + \rec(K) \subseteq R$. The reverse inclusion is obtained by representing $R$ with the help of \eqref{mink:sum:rep} and applying \eqref{ext:R:subset:M} together with $\rec(R)=\rec(K)$:
	\[
	  R = \conv\bigl(\ext(R)\bigr) + \rec(R) \subseteq \conv\bigl(K \cap M\bigr) + \rec(R) = \conv\bigl(K \cap M\bigr) + \rec(K).
	\]
	This shows Part~I. Let us show Part~II. Assume that $M$ is a mixed-integer space and let $P$ be a rational line-free polyhedron in $\real^d$. The case $P \cap M= \emptyset$ follows directly by Part~I. Therefore we assume $P \cap M \ne \emptyset$. By Lemma~\ref{rec:MI:hull} and Part~I we obtain $R = \conv(K \cap M) + \rec(K) = \conv(K \cap M) +\rec(\conv(K \cap M)) = \conv(K \cap M)$. This finishes the proof of Part~II.
\end{proof}

\begin{proof}[Proof of Corollary~\ref{convergence:bounded:case}]
	By Theorem~\ref{convergence:thm}.II one has $\bigcap_{i=0}^{+\infty} R_\cL^i(K) = \conv(K \cap M)$. Since the sequence $\bigl(R_\cL^i(K)\bigr)_{i=0}^{+\infty}$ consists of bounded closed convex sets and is decreasing, applying Lemma~\ref{decr:convergence}, we see that $R_\cL^i(K)$ converges to $\conv(K\cap M)$, as $i \rightarrow +\infty$, with respect to the Hausdorff distance.
\end{proof}

\small 


\providecommand{\bysame}{\leavevmode\hbox to3em{\hrulefill}\thinspace}
\providecommand{\MR}{\relax\ifhmode\unskip\space\fi MR }
\providecommand{\MRhref}[2]{%
  \href{http://www.ams.org/mathscinet-getitem?mr=#1}{#2}
}
\providecommand{\href}[2]{#2}

\end{document}